\numberwithin{equation}{section}
\newtheorem{theorem}{Theorem}[section]
\newtheorem{lemma}[theorem]{Lemma}
\newtheorem{corollary}[theorem]{Corollary}
\theoremstyle{definition}
\newtheorem{conjecture}[theorem]{Conjecture}
\theoremstyle{remark}
\newtheorem{remark}[theorem]{Remark}
\newtheorem*{remark*}{Note}
\numberwithin{equation}{section}
\newcommand{\RNum}[1]{\uppercase\expandafter{\romannumeral #1\relax}}
\newcommand{\specificthanks}[1]{\@fnsymbol{#1}}
\DeclareFontFamily{OML}{rsfs}{\skewchar\font'177}
\DeclareFontShape{OML}{rsfs}{m}{n}{ <5> <6> rsfs5 <7> <8> <9>
	rsfs7 <10> <10.95> <12> <14.4> <17.28> <20.74> <24.88> rsfs10 }{}
\DeclareMathAlphabet{\mathfs}{OML}{rsfs}{m}{n}
\newcounter{cnstcnt}
\newcommand{\cref}[1]{\ensuremath{c_{\ref*{#1}}}}
\newcounter{newcnstcnt}
\newcommand{\Cref}[1]{\ensuremath{C_{\ref*{#1}}}}
\DeclareFontFamily{U}{mathx}{}
\DeclareFontShape{U}{mathx}{m}{n}{<-> mathx10}{}
\DeclareSymbolFont{mathx}{U}{mathx}{m}{n}
\DeclareMathAccent{\widehat}{0}{mathx}{"70}
\DeclareMathAccent{\widecheck}{0}{mathx}{"71}
\begin{document}

	\title{On the gap between cluster dimensions of loop soups on $\mathbb{R}^3$ and the metric graph of $\mathbb{Z}^3$}




	

		\author{Zhenhao Cai$^1$}
		\address[Zhenhao Cai]{Faculty of Mathematics and Computer Science, Weizmann Institute of Science}
		\email{zhenhao.cai@weizmann.ac.il}
		\thanks{$^1$Faculty of Mathematics and Computer Science, Weizmann Institute of Science}

		\author{Jian Ding$^2$}
		\address[Jian Ding]{New Cornerstone Science Laboratory, School of Mathematical Sciences, Peking University}
		\email{dingjian@math.pku.edu.cn}
		\thanks{$^2$New Cornerstone Science Laboratory, School of Mathematical Sciences, Peking University}
	
	
	\maketitle
	%
	%

	 	\begin{abstract}


     The question of understanding the scaling limit of metric graph critical loop soup clusters and its relation to loop soups in the continuum appears to be one of the subtle cases that reveal interesting new scenarios about scaling limits, with a mixture of macroscopic and microscopic randomness. In the present paper, we show that in three dimensions, scaling limits of the metric graph clusters are strictly larger than the clusters of the limiting continuum Brownian loop soup. We actually show that the upper box-counting dimension of the latter clusters is strictly smaller than $5/2$, while that of the former is $5/2$.


  \end{abstract}

\section{Introduction}\label{section_intro}

In this paper, we study the percolation of loop soups, with a particular focus on the fractal dimension of their clusters. Roughly speaking, a loop soup is a random collection of loops, which are paths on discrete graphs or curves in continuum spaces that start and end at the same point. As reviewed below, this model possesses elegant geometric properties and exhibits profound connections to numerous models in statistical physics. We focus on three types of spaces: the integer lattice $\mathbb{Z}^d$, the Euclidean space $\mathbb{R}^d$, and the metric graph $\widetilde{\mathbb{Z}}^d$, where $d$ represents the dimension (unless otherwise specified, we assume that $d\ge 3$). The metric graph $\widetilde{\mathbb{Z}}^d$ (a.k.a. cable graph or cable system) is constructed as follows. Let $\mathbb{L}^d:=\{\{x,y\}:x,y\in \mathbb{Z}^d,\|x-y\|=1\}$ denote the edge set of $\mathbb{Z}^d$ (where $\|\cdot\|$ represents the Euclidean norm). For each edge $e=\{x,y\}\in \mathbb{L}^d$, we assign a compact interval $I_e$ of length $d$ with two endpoints identical to $x$ and $y$ respectively. The metric graph $\widetilde{\mathbb{Z}}^d$ is then defined as the union of all these intervals. Loop soups on $\Omega\in \{\mathbb{Z}^d,\mathbb{R}^d, \widetilde{\mathbb{Z}}^d\}$ can be defined via a unified procedure. Precisely, we denote by $\{X_t^{\Omega}\}_{t\ge 0}$ the canonical diffusion on $\Omega$: 
\begin{itemize}

	\item  When $\Omega=\mathbb{Z}^d$, $\{X_t^{\mathbb{Z}^d}\}_{t\ge 0}$ is the continuous-time simple random walk on $\mathbb{Z}^d$ with jump rate $1$. I.e., for any $x,y\in \mathbb{Z}^d$ and $t\ge 0$, 
\begin{equation}\label{def_continuous_time_RW}
\mathbb{P}\big( X^{\mathbb{Z}^d}_{t+\Delta t}=y  \mid X^{\mathbb{Z}^d}_t=x \big) =  \left\{
\begin{aligned}
 &(2d)^{-1}\Delta t \cdot \mathbbm{1}_{\{x,y\}\in \mathbb{L}^d} + o(\Delta t)   & \text{if}\ x\neq y; \\
&1-\Delta t + o(\Delta t)  & \text{if}\ x= y.
\end{aligned}
\right.
\end{equation}	 	

	\item  When $\Omega=\mathbb{R}^d$, $\{X_t^{\mathbb{R}^d}\}_{t\ge 0}$ is the standard Brownian motion on $\mathbb{R}^d$. Note that the projection of $X_{\cdot}^{\mathbb{R}^d}$ to each coordinate is a standard one-dimensional Brownian motion, which has variance $1$ at time $1$.

	\item  When $\Omega=\widetilde{\mathbb{Z}}^d$, within each interval $I_e$, $\{X_t^{\widetilde{\mathbb{Z}}^d}\}_{t\ge 0}$ behaves as a standard one-dimensional Brownian motion. When reaching a lattice point $x\in \mathbb{Z}^d$, it evolves as a Brownian excursion along the interval uniformly selected from $\{I_{\{x,y\}}\}_{\{x,y\}\in \mathbb{L}^d}$. After hitting a lattice point (either returning to $x$ or reaching a neighbor of $x$), it resumes in the same manner.

\end{itemize}
For any $v_1,v_2\in \Omega$ and $t\ge 0$, let $q_t^{\Omega}(v_1,v_2)$ denote the transition density of $X_\cdot^{\Omega}$ from $v_1$ to $v_2$ at time $t$, and let $\mathbb{P}_{v_1,v_2,t}^{\Omega}$ denote the law of the bridge of $X_\cdot^{\Omega}$ from $v_1$ to $v_2$ with duration $t$ (whose transition density is given by $\frac{q_s^{\Omega}(v_1,\cdot )q_{t-s}^{\Omega}(\cdot ,v_2)}{q_t^{\Omega}(v_1,v_2)}$ for $0\le s\le t$). The loop measure $\mu^{\Omega}$ is then defined by 
\begin{equation}\label{def_loop_measure}
	\mu^{\Omega}(\cdot):= \int_{v\in \Omega} \mathrm{d}\mathrm{m}^{\Omega}(v)\int_{t> 0} t^{-1}q_t^{\Omega}(v,v)\mathbb{P}_{v,v,t}^{\Omega}(\cdot) \mathrm{d}t.
\end{equation}
Here $\mathrm{m}^{\mathbb{Z}^d}$ is the counting measure on $\mathbb{Z}^d$, and for $\Omega\in \{\mathbb{R}^d, \widetilde{\mathbb{Z}}^d\}$, $\mathrm{m}^{\Omega}$ is the Lebesgue measure on $\Omega$. For each $\alpha>0$, the loop soup on $\Omega$ of intensity $\alpha$, denoted by $\mathcal{L}_\alpha^{\Omega}$, is defined as the Poisson
point process with intensity measure $\alpha \mu^{\Omega}$.

The loop soup $\mathcal{L}_{\alpha}^{\widetilde{\mathbb{Z}}^d}$ was originally introduced in \cite{lupu2016loop} as an extension of $\mathcal{L}_{\alpha}^{\mathbb{Z}^d}$, which stems from the following connection between the diffusions $X_t^{\widetilde{\mathbb{Z}}^d}$ and $X_t^{\mathbb{Z}^d}$. First of all, the projection of $X_t^{\widetilde{\mathbb{Z}}^d}$ onto $\mathbb{Z}^d$ has the same law as $X_t^{\mathbb{Z}^d}$. In addition, for each $\Omega\in \{\mathbb{Z}^d,\widetilde{\mathbb{Z}}^d\}$, from the time when $X_t^{\Omega}$ departs from a lattice point $x\in \mathbb{Z}^d$ until it first hits a neighbor of $x$, the occupation time at $x$ is an independent exponential random variable with mean $1$. Here the occupation time is the holding time of this jump when $\Omega=\mathbb{Z}^d$, and is the total local time at $x$ of all Brownian excursions during this period when $\Omega=\widetilde{\mathbb{Z}}^d$. Consequently, the Green's function, defined by   
\begin{equation}
	G^{\Omega}(v_1,v_2):= \mathbb{E}_{v_1}^{\Omega}\big[ \mathsf{L}_{v_2}(X_\cdot^{\Omega})  \big] = \int_{0}^{\infty} q_t^{\Omega}(v_1,v_2) dt, \ \ \forall v_1,v_2\in \mathbb{Z}^d,
	\end{equation}
is consistent for $\Omega\in \{\mathbb{Z}^d,\widetilde{\mathbb{Z}}^d\}$. Here $\mathbb{E}_{v_1}^{\Omega}$ denotes the expectation under $\mathbb{P}_{v_1}^{\Omega}$ (i.e., the law of $X_\cdot^{\Omega}$ starting from $v_1$), and $\mathsf{L}_{v_2}(\cdot)$ denotes the total occupation time at $v_2$. Moreover, the range (i.e., the set of visited points) of $X_\cdot^{\widetilde{\mathbb{Z}}^d}$ can be recovered by taking the union of all edges traversed by $X_\cdot^{\mathbb{Z}^d}$ and then adding independent Brownian excursions $\mathcal{E}_{x}(\mathsf{L}_{x}(X_\cdot^{\mathbb{Z}^d}))$ for all $x\in \mathbb{Z}^d$. Here $\mathcal{E}_{x}(t)$ represents Brownian excursions on $\widetilde{\mathbb{Z}}^d$ from $x$ that have total local time $t$ at $x$ and are conditioned to return to $x$ before reaching any neighbor of $x$. As noted in \cite[Section 2]{lupu2016loop}, this correspondence naturally leads to a coupling between the loops in $\mathcal{L}_\alpha^{\mathbb{Z}^d}$ and $\mathcal{L}_\alpha^{\widetilde{\mathbb{Z}}^d}$, which satisfies the following properties: 
 \begin{enumerate}

 	\item[(i)]  There is a bijection $\varpi(\cdot)$ between the loops in $\mathcal{L}_\alpha^{\mathbb{Z}^d}$ and $\mathcal{L}_\alpha^{\widetilde{\mathbb{Z}}^d}$ that visit at least two lattice points. Moreover, given any of these loops $\ell$ in $\mathcal{L}_\alpha^{\mathbb{Z}^d}$, the range of $\varpi(\ell)$ can be obtained from that of $\ell$ by adding Brownian excursions $\mathcal{E}_{x}(\mathsf{L}_{x}(\ell))$ for all $x\in \mathbb{Z}^d$.

 	\item[(ii)]  In $\mathcal{L}_\alpha^{\mathbb{Z}^d}$ there are loops that stay at a single lattice point, each carrying a positive holding time at that point. For each $x\in \mathbb{Z}^d$, given these loops (assume that their total holding time at $x$ is $L_x$), the union of the ranges of loops in $\mathcal{L}_\alpha^{\widetilde{\mathbb{Z}}^d}$ that visit $x$ but no other lattice points has the same distribution as $\mathcal{E}_{x}(L_x)$; in addition, these unions are independent across $x\in \mathbb{Z}^d$.


 	

 	\item[(iii)] Every loop in $\mathcal{L}_\alpha^{\mathbb{Z}^d}$ is covered by either Property (i) or (ii). However, the loops in $\mathcal{L}_\alpha^{\widetilde{\mathbb{Z}}^d}$ that do not visit any lattice point (i.e., the whole loop is contained in the interior of a single interval) have not been accounted for. For each $e\in \mathbb{L}^d$, the loops in $\mathcal{L}_\alpha^{\widetilde{\mathbb{Z}}^d}$ contained in $I_e$ are independent of both $\mathcal{L}_\alpha^{\mathbb{Z}^d}$ and the loops contained in any other interval.

 \end{enumerate}
As in \cite{cai2024high}, we refer to the loops described in Properties (i) and (ii) as fundamental loops and point loops respectively. The loops of $\mathcal{L}_\alpha^{\widetilde{\mathbb{Z}}^d}$ described in Property (iii) are called edge loops.

 At intensity $\alpha=\frac{1}{2}$, the loop soup has a profound connection with the Gaussian free field (GFF). This connection can be characterized via the isomorphism theorem, as described below. For $\Omega\in \{\mathbb{Z}^d,\widetilde{\mathbb{Z}}^d\}$, the GFF on $\Omega$, denoted by $\{ \phi_v^{\Omega} \}_{v\in \Omega}$, is a mean-zero Gaussian field with covariance 
 \begin{equation}\label{def_GFF}
 	\mathbb{E}\big[ \phi_{v_1}^{\Omega}\phi_{v_2}^{\Omega} \big] = G^{\Omega}(v_1,v_2), \ \ \forall v_1,v_2\in \Omega. 
 \end{equation}
 By the consistency of $G^{\mathbb{Z}^d}(\cdot,\cdot)$ and $G^{\widetilde{\mathbb{Z}}^d}(\cdot,\cdot)$ restricted to $\mathbb{Z}^d$, one has 
 \begin{equation}
 	\big\{ \phi_v^{\mathbb{Z}^d} : v\in \mathbb{Z}^d  \big\} \overset{\mathrm{d}}{=}	\big\{ \phi_v^{\widetilde{\mathbb{Z}}^d} : v\in \mathbb{Z}^d \big\}. 
 \end{equation}
 In addition, conditioned on the values of $\phi_\cdot^{\widetilde{\mathbb{Z}}^d}$ on the lattice $\mathbb{Z}^d$, for each $\{x,y\}\in \mathbb{L}^d$, $\{\phi_v^{\widetilde{\mathbb{Z}}^d}\}_{v\in I_{\{x,y\}}}$ is an independent Brownian bridge with boundary conditions $\phi_x^{\widetilde{\mathbb{Z}}^d}$ and $\phi_y^{\widetilde{\mathbb{Z}}^d}$, generated by Brownian motions with variance $2$ at time $1$. The isomorphism theorem, which was first presented in \cite{le2011markov} for discrete graphs and later extended in \cite{lupu2016loop} to metric graphs, states that for $\Omega\in \{\mathbb{Z}^d,\widetilde{\mathbb{Z}}^d\}$, there exists a coupling between the GFF $\{ \phi_v^{\Omega} \}_{v\in \Omega}$ and the loop soup $\mathcal{L}_{1/2}^{\Omega}$ such that  
 \begin{equation}\label{indentity_iso}
 	\tfrac{1}{2}\big(\phi_v^{\Omega}\big)^2=\widehat{\mathcal{L}}_{1/2}^{\Omega,v}, \ \ \forall v\in \Omega. 
 \end{equation}
 Here $\widehat{\mathcal{L}}_{1/2}^{\Omega,v}$ denotes the total occupation time at $v$ of all loops in $\mathcal{L}_{1/2}^{\Omega}$. See also \cite{10.1214/ECP.v17-1792} for a generalized version involving random interlacements, and \cite{drewitz2022cluster} for thorough discussions on general graphs. As reviewed below, the isomorphism theorem builds an essential bridge between loop soups and GFFs, and significantly propels their development, particularly from the perspective of percolation.



  The percolation of clusters induced by the loop soup $\mathcal{L}_{\alpha}^{\mathbb{Z}^d}$ was first studied in \cite{chang2016phase}. Here each edge is considered open if and only if it is traversed by at least one fundamental loop. In this percolation model, the states of different edges are correlated, unlike in Bernoulli percolation. It was shown in \cite{chang2016phase} that as the intensity $\alpha$ varies, $\mathcal{L}_{\alpha}^{\mathbb{Z}^d}$ exhibits a non-trivial phase transition in percolation. Various bounds on connecting probabilities were obtained in \cite{chang2016phase, chang2017supercritical}. At this stage, almost all estimates merely rely on combinatorial considerations, and are usually not sharp. The advent of the metric graph loop soup in \cite{lupu2016loop} marked a shift to the stage of rigorous quantitative estimation in this field, as we sketch below. Under the coupling in the isomorphism theorem, the identity (\ref{indentity_iso}) implies that every loop cluster of $\mathcal{L}_{1/2}^{\widetilde{\mathbb{Z}}^d}$ is exactly a sign cluster of $\phi_\cdot^{\widetilde{\mathbb{Z}}^d}$. Here a ``loop cluster'' means a maximal connected subgraph of $\widetilde{\mathbb{Z}}^d$ consisting of loops in $\mathcal{L}_{1/2}^{\widetilde{\mathbb{Z}}^d}$, and a ``sign cluster'' means a maximal connected subgraph where the signs of $\phi_\cdot^{\widetilde{\mathbb{Z}}^d}$ are constant. Moreover, since $\phi_\cdot^{\widetilde{\mathbb{Z}}^d}$ is continuous, its values on the boundary of each sign cluster must be zero (the same applies to the occupation field $\widehat{\mathcal{L}}_{1/2}^{\widetilde{\mathbb{Z}}^d,\cdot}$). As shown in \cite[Proposition 5.2]{lupu2016loop}, this observation together with the symmetry and correlation (\ref{def_GFF}) of $\phi_\cdot^{\widetilde{\mathbb{Z}}^d}$ leads to the following explicit formula for the two-point function of $\mathcal{L}_{1/2}^{\widetilde{\mathbb{Z}}^d}$: 
  \begin{equation}\label{two_point_function}
  	\mathbb{P}\big(v_1\xleftrightarrow{\cup \mathcal{L}_{1/2}^{\widetilde{\mathbb{Z}}^d}} v_2\big)=  \frac{2}{\pi}\arcsin\Big( \tfrac{G^{ \widetilde{\mathbb{Z}}^d}(v_1,v_2)}{\sqrt{G^{ \widetilde{\mathbb{Z}}^d}(v_1,v_1)G^{ \widetilde{\mathbb{Z}}^d}(v_2,v_2)  }} \Big),   \ \forall v_1,v_2\in \widetilde{\mathbb{Z}}^d. 
  \end{equation}
   Here we use $\cup \mathcal{L}$ to denote the union of ranges of all loops in the point measure $\mathcal{L}$, and we write $A\xleftrightarrow{F} A'$ for the event that there exists a path within $F$ connecting $A$ and $A'$. Referring to \cite[Sections 5 and 6]{lupu2016loop}, the following two fundamental consequences can be derived from the isomorphism theorem and (\ref{two_point_function}): 
 \begin{enumerate}
  

 	\item[(1)]  For any $\alpha\in (0,\frac{1}{2}]$, the loop soup $\mathcal{L}_{\alpha}^{\widetilde{\mathbb{Z}}^d}$ does not percolate;

 	\item[(2)]   For the GFF level-set $\widetilde{E}^{\ge h}:=\{v\in \widetilde{\mathbb{Z}}^d: \phi_v^{\widetilde{\mathbb{Z}}^d}\ge h \}$ ($h\in \mathbb{R}$), the critical level (with respect to $h$) for percolation exactly equals zero.

 \end{enumerate}
   Notably, it was shown in \cite{chang2024percolation} that $\mathcal{L}_{\alpha}^{\widetilde{\mathbb{Z}}^d}$ percolates for all $\alpha>\frac{1}{2}$, which together with (1) verifies that the critical intensity of $\mathcal{L}_{\alpha}^{\widetilde{\mathbb{Z}}^d}$ equals $\frac{1}{2}$.

   A series of works sought to extend the two-point estimate (\ref{two_point_function}) to point-to-set estimates, particularly for the one-arm probability. To be precise, we use the unified notation $\bm{0}$ to denote the origin for all $\Omega\in \{\mathbb{Z}^d,\mathbb{R}^d,\widetilde{\mathbb{Z}}^d\}$. For any $A\subset \mathbb{Z}^d$, we denote its boundary by $\partial A:=\{ x\in A:  \exists y\in A^c\ \text{such that}\ \{x,y\}\in \mathbb{L}^d\}$. In addition, for any $N\ge 1$, we denote the box $B(N):=[-N,N]^d\cap \mathbb{Z}^d$. For any $d\ge 3$, the one-arm probability for $\mathcal{L}_{1/2}^{\widetilde{\mathbb{Z}}^d}$ is defined by 
   \begin{equation}\label{def_one_arm_prob}
   	\theta_d(N):= \mathbb{P}\big( \bm{0}\xleftrightarrow{\cup \mathcal{L}_{1/2}^{\widetilde{\mathbb{Z}}^d}} \partial B(N) \big). 
   \end{equation}
    (\textbf{P.S.} In previous papers \cite{cai2024high, cai2024one, cai2024quasi, cai2024incipient}, we used $\theta_d(\cdot)$ to denote the one-arm probability of the positive level-set of the GFF $\phi_\cdot^{\widetilde{\mathbb{Z}}^d}$. By the isomorphism theorem, this probability is proportional to that in (\ref{def_one_arm_prob}); the same applies to $\rho_d(\cdot, \cdot)$ in (\ref{def_rho_crossing}).) It has been established through a succession of works \cite{ding2020percolation, drewitz2023critical, cai2024high, drewitz2023arm, drewitz2024critical, cai2024one} that 
    \begin{align}
	&\text{when}\ 3\le d<6,\ \ \ \ \ \ \ \ \  \theta_d(N) \asymp N^{-\frac{d}{2}+1};\label{one_arm_low} \\
	&\text{when}\ d=6,\ \ \ \ \  N^{-2}\lesssim \theta_6(N) \lesssim N^{-2+\varsigma(N)}, \  \text{where}\ \varsigma(N):= \tfrac{\ln\ln(N)}{\sqrt{\mathrm{ln}(N)}}\ll 1; \label{one_arm_6} \\
	&\text{when}\ d>6,\ \ \ \ \ \ \ \ \ \ \ \ \ \ \ \theta_d(N) \asymp N^{-2}.\label{one_arm_high}
\end{align}
 Here ``$f\lesssim g$'' means that there exists a constant $C>0$ depending only on $d$ such that $f\le Cg$, and ``$f\asymp g$'' means $f\lesssim g\lesssim f$. At the critical dimension $d_c=6$, though the exponent of $\theta_6(N)$ has been derived in (\ref{one_arm_6}), its exact order remains unknown. It was conjectured in \cite{cai2024one} that $\theta_6(N)\asymp N^{-2}[\mathrm{ln}(N)]^{\delta}$ for some $\delta>0$. As noted in previous works \cite{cai2024quasi, cai2024incipient}, the diverging disparity between the upper and lower bounds in (\ref{one_arm_6}) poses significant complexity in our analysis. In view of this, we omit the six-dimensional case in the following discussion. In addition to $\theta_d(\cdot)$, for any $N> n\ge 1$, the crossing probability 
 \begin{equation}\label{def_rho_crossing}
 	\rho_d(n,N):=\mathbb{P}\big(B(n)\xleftrightarrow{\cup \mathcal{L}_{1/2}^{\widetilde{\mathbb{Z}}^d}}\partial B(N)\big)
 \end{equation}
was also studied in \cite{cai2024one}. Precisely, it was established in \cite[Theorem 1.2]{cai2024one} that 
    \begin{align}
	&\text{when}\ 3\le d<6,\  \rho_d(n,N) \asymp \big(n/N\big)^{\frac{d}{2}-1};\label{crossing_low} \\
	&\text{when}\ d>6, \ \ \ \ \   \ \    \rho_d(n,N) \asymp  (n^{d-4}N^{-2})\land 1. \label{crossing_high}
\end{align}
Beyond these estimates, the geometric structure of loop clusters of $\mathcal{L}_{1/2}^{\widetilde{\mathbb{Z}}^d}$ (equivalently, sign clusters of $\phi_\cdot^{\widetilde{\mathbb{Z}}^d}$) has been studied from several perspectives, including their volume growth \cite{cai2024quasi, drewitz2024cluster} and incipient infinite clusters \cite{ganguly2024ant, cai2024incipient, ganguly2024critical, werner2025switching}. A detailed account can be found in the companion paper \cite{inpreparation_pivotal}.

The main motivation of this paper arises from the following conjecture:

  \begin{conjecture}[{\cite[Conjecture A]{werner2021clusters}}]\label{conj_3d}
	The scaling limit of loop clusters of $\mathcal{L}_{1/2}^{\widetilde{\mathbb{Z}}^3}$ is exactly the collection of loop clusters of $\mathcal{L}_{1/2}^{\mathbb{R}^3}$. Moreover, the dimension of these clusters equals $\frac{5}{2}$. 
  \end{conjecture}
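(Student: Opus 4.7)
The plan is to prove Conjecture \ref{conj_3d} in three coupled stages: (i) convergence of the rescaled loop measure $\mu^{\widetilde{\mathbb{Z}}^3}$ to (a constant multiple of) $\mu^{\mathbb{R}^3}$ in an appropriate topology on loop space; (ii) transfer of this convergence to clusters in the Hausdorff topology on compact subsets of $\mathbb{R}^3$; (iii) identification of the dimension via the discrete estimates (\ref{one_arm_low}), (\ref{crossing_low}) and (\ref{result_typical_volume}). For stage (i), I would use the rescaling $\Psi_N(v):=v/N$ together with a time rescaling by $N^2$. The three required ingredients are: the invariance principle for $X_\cdot^{\widetilde{\mathbb{Z}}^3}$ toward $X_\cdot^{\mathbb{R}^3}$ (after a constant time change to account for the variance of the Brownian excursions inside each interval $I_e$); the Green's-function asymptotic $N\cdot G^{\widetilde{\mathbb{Z}}^3}(Nx,Ny)\to c_3\|x-y\|^{-1}=c_3 G^{\mathbb{R}^3}(x,y)$; and the continuity of the integrand in (\ref{def_loop_measure}) once $t$ is bounded away from $0$. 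Restricted to loops of diameter at least any fixed $\varepsilon>0$, the resulting measures are finite and standard Poisson-process machinery upgrades the convergence of mean measures to $\mathcal{L}_{1/2}^{\widetilde{\mathbb{Z}}^3}\Rightarrow \mathcal{L}_{1/2}^{\mathbb{R}^3}$ as point processes of macroscopic loops.

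For stage (ii), the guiding principle is that macroscopic cluster structure on both sides is stable under an $\varepsilon$-truncation of loops. On the discrete side, the crossing estimate $\rho_3(n,N)\asymp (n/N)^{5/2}$ from (\ref{crossing_low}) implies that clusters of diameter $\le \varepsilon N$ cross a fixed macroscopic annulus with probability $O(\varepsilon^{5/2})$; hence with probability tending to $1$ as $\varepsilon\to 0$, macroscopic connections are realized by chains of loops of diameter $\ge \varepsilon N$. The parallel continuum statement should follow from a continuum version of the isomorphism theorem in three dimensions, identifying the clusters of $\mathcal{L}_{1/2}^{\mathbb{R}^3}$ with sign-clusters of a Wick-renormalized GFF on $\mathbb{R}^3$ and yielding an analogue of the two-point function (\ref{two_point_function}) with $G^{\mathbb{R}^3}$ in place of $G^{\widetilde{\mathbb{Z}}^3}$. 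Combining these stability statements with stage (i) identifies the scaling limit of metric-graph clusters with the continuum clusters in the Hausdorff topology on compact subsets.

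Stage (iii) then follows from a first/second-moment argument. The typical-volume result (\ref{result_typical_volume}) gives $\mathcal{V}_M\asymp M^{5/2}$ in dimension three, which passes through the scaling in (i)--(ii) as a Frostman-type lower bound of $5/2$ on the Hausdorff dimension of $\mathcal{C}_{\bm{0}}$ in the continuum. The matching upper bound comes from a first-moment count based on (\ref{two_point_function}) and $G^{\widetilde{\mathbb{Z}}^3}(\bm{0},x)\asymp \|x\|^{-1}$: the expected number of $\varepsilon$-cubes in $[-1,1]^3$ meeting the scaling-limit cluster is at most of order $\varepsilon^{-5/2}$, which gives box-counting dimension at most $5/2$.

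The main obstacle lies in stage (ii). The metric graph carries several structural features with no direct Brownian analogue---point loops that inject extra local time at every vertex, edge loops entirely confined to a single interval $I_e$, and the one-dimensional skeleton formed by the intervals themselves---and a scale decomposition via (\ref{def_loop_measure}) shows that the total mass of fundamental loops of diameter in $[\varepsilon N, N]$ diverges logarithmically as $\varepsilon\to 0$, so their cooperation must be controlled very carefully. The precise statement required is: for every $\eta>0$, the probability that two points of $\mathcal{C}_{\bm{0}}\cap B(N)$ at Euclidean distance $\ge \eta N$ fail to be connected within the sub-collection of loops of diameter $\ge \varepsilon N$ tends to $0$ as $\varepsilon\to 0$, uniformly in $N$. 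Establishing this uniform cooperation bound---in effect, that the avalanche of microscopic loops does not redraw the macroscopic connectivity graph in the $N\to\infty$ limit---is the crux of the argument and the step on which the whole strategy hinges.
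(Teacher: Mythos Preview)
Your proposal attempts to prove a statement that the paper actually \emph{disproves}. The main results (Theorems \ref{theorem_main} and \ref{theorem_main_part2}) show that the one-arm exponent $\zeta$ for $\mathcal{L}_{1/2}^{\mathbb{R}^3}$ satisfies $\zeta>\tfrac{1}{2}$, so every Brownian loop cluster has upper box-counting dimension at most $3-\zeta<\tfrac{5}{2}$, whereas any scaling limit of the metric-graph clusters (if it exists) must have clusters of dimension exactly $\tfrac{5}{2}$. Thus Conjecture \ref{conj_3d} is false.

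The failure in your outline is precisely the ``crux'' you flag in stage (ii): the claim that macroscopic connectivity is stable under removal of loops of diameter $<\varepsilon N$, uniformly in $N$. This is not merely hard; it is wrong. Theorem \ref{theorem_const_removal_edge_loop} shows that removing even a specific family $\mathfrak{L}$ of edge loops at scale $1$ pushes the crossing exponent from $\tfrac{1}{2}$ to $\tfrac{1}{2}+c_3$ for some $c_3>0$; the companion results cited around (\ref{ineq_first_pivotal}) make this quantitative by showing that a macroscopic cluster in $\widetilde{\mathbb{Z}}^3$ typically contains order $N^{1/2}$ cut loops at unit scale. In other words, the ``avalanche of microscopic loops'' \emph{does} redraw the macroscopic connectivity graph, and Ingredient (a) of the heuristic you are implicitly invoking fails. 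Your appeal to $\rho_3(n,N)\asymp(n/N)^{5/2}$ controls the probability that a \emph{single} small cluster crosses a macroscopic annulus, but says nothing about whether a chain of many small loops is needed to glue the large loops together; the paper shows it is. The continuum isomorphism you invoke in stage (ii) does not exist in a form that would rescue the argument, and indeed (\ref{ineq_main1}) rules out a two-point function of the shape (\ref{two_point_function}) for $\mathcal{L}_{1/2}^{\mathbb{R}^3}$.
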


As stated in \cite[Sections 3 and 4]{werner2021clusters}, this conjecture stems from the following two ingredients concerning the loop soup $\mathcal{L}_{1/2}^{\widetilde{\mathbb{Z}}^3}$: 
\begin{enumerate}

	\item[(a)]   As the mesh size tends to zero, the contribution of microscopic loops to the construction of macroscopic clusters vanishes. In other words, the scaling limit of loop clusters is governed solely by macroscopic loops.

	\item[(b)]   Based on the coupling between random walks and Brownian motion, it was shown in \cite{lawler2007random, sapozhnikov2018brownian} that the macroscopic loops in $\mathcal{L}_{1/2}^{\widetilde{\mathbb{Z}}^3}$ can be approximated bijectively by the continuum loops in $\mathcal{L}_{1/2}^{\mathbb{R}^3}$. Combined with the fact that the loops in $\mathcal{L}_{1/2}^{\mathbb{R}^3}$ can intersect (since Brownian motions on $\mathbb{R}^3$ do so; see e.g., \cite[Theorem 9.1]{morters2010brownian}), this approximation suggests that the clusters consisting of macroscopic loops converge to the loop clusters of $\mathcal{L}_{1/2}^{\mathbb{R}^3}$.


   

\end{enumerate} 
In fact, the analogue of Conjecture \ref{conj_3d} in two dimensions has been proved. Specifically, the celebrated work \cite{sheffield2012conformal} introduced the conformal loop ensembles $\mathrm{CLE}_\kappa$ for $\kappa\in (\frac{8}{3},4]$, defined as the boundaries of clusters of the Brownian loop soup in a two-dimensional domain with intensity $\frac{(3\kappa-8)(6-\kappa)}{4\kappa}$. It was also shown in \cite{sheffield2012conformal} that $\mathrm{CLE}_\kappa$ consists of loops in the form of Schramm-Loewner evolution (SLE). Building on this work, it was confirmed in \cite{lupu2019convergence} that on the discrete half-plane or its metric graph, the scaling limit of the boundaries of clusters of a loop soup with intensity $\frac{1}{2}$ is $\mathrm{CLE}_4$, which is exactly the continuum counterpart at the same intensity.


 Back to the three-dimensional case, while Ingredient (b) is natural and supported by classical couplings between random walks and Brownian motion, Ingredient (a) is more subtle and appears to be decisive for the validity of Conjecture \ref{conj_3d}. However, as outlined below, a series of results established in the companion papers \cite{inpreparation_twoarm, inpreparation_pivotal} constitute evidence against the vanishing of the contribution from microscopic loops. In \cite{inpreparation_twoarm}, the exact order of the following two-arm probability was derived. Specifically, for any $v_1,v_2\in \widetilde{\mathbb{Z}}^d$ and $N\ge 1$, we define $\mathsf{H}_{v_2}^{v_1}(N)$ as the event that there exist two distinct loop clusters $\mathcal{C}_1,\mathcal{C}_2$ of $\mathcal{L}_{1/2}^{\widetilde{\mathbb{Z}}^d}$ such that each $\mathcal{C}_i$ connects $v_i$ and $\partial B(N)$. According to \cite[Theorem 1.1]{inpreparation_twoarm}, for any $d\ge 3$ with $d\neq 6$, there exist constants $C,c>0$ depending only on $d$ such that for any $N\ge C$ and any edge $\{x,y\}\in \mathbb{L}^d$ with $x,y\in B(cN)$, 
 \begin{equation}
 	\mathbb{P}\big(\mathsf{H}_{y}^{x}(N)  \big) \asymp N^{-[(\frac{d}{2}+1)\land 4]}. 
 \end{equation}
 It follows directly that the expected number of edges in a box of size $N$, whose endpoints belong to two distinct loop clusters with diameters at least $N$, is of order $N^{(\frac{d}{2}-1)\vee (d-4)}$. As shown in \cite[Theorem 1.1]{inpreparation_pivotal}, this estimate, after a delicate second moment analysis, implies that two macroscopic loop clusters can nearly touch at many locations (in sharp contrast to the two-dimensional case, where the distance between two macroscopic loop clusters remains macroscopic; see \cite{sheffield2012conformal, lupu2019convergence}). By viewing the two clusters that nearly touch as originating from two distant sets $A_1$ and $A_2$, each touching location can naturally be interpreted as a potential interval that contains a pivotal loop for the event $\big\{A_1\xleftrightarrow{\cup \mathcal{L}_{1/2}^{\widetilde{\mathbb{Z}}^d}} A_2\big\}$. Making this interpretation precise, it was proved in \cite[Theorem 1.6]{inpreparation_pivotal} that for the one-arm event $\big\{\bm{0}\xleftrightarrow{\cup \mathcal{L}_{1/2}^{\widetilde{\mathbb{Z}}^d}} \partial B(N)\big\}$, the typical number of such intervals is of order $N^{(\frac{d}{2}-1)\land  2}$. In other words, a macroscopic loop cluster typically contains a polynomial number of cut loops (i.e., loops whose removal disconnects the cluster) at scale $1$, which is incompatible with Ingredient (a).

  The main result of this paper develops the counter-evidence established in \cite{inpreparation_twoarm, inpreparation_pivotal}, and proves that Conjecture \ref{conj_3d} does not hold. For any $x\in \mathbb{R}^d$ and $r>0$, let $\bm{\mathcal{B}}_x(r)$ denote the Euclidean ball of radius $r$ centered at $x$, and let $\bm{\mathcal{C}}_x(r)$ denote its boundary; when $x=\bm{0}$, we simply write $\bm{\mathcal{B}}(r)$ and $\bm{\mathcal{C}}(r)$. For any $A\subset \mathbb{R}^d$, we denote its upper box-counting dimension by $\overline{\mathrm{dim}}_{\mathrm{B}}(A)$. For any $r>0$, we denote by $\mathfrak{C}_r$ the collection of loop clusters of $\mathcal{L}_{1/2}^{\mathbb{R}^3}$ with diameter at least $r$. Here the diameter of a set $A\subset \mathbb{R}^d$ is defined by $\sup_{x,y\in A}\|x-y\|$.

\begin{theorem}\label{theorem_main}
	(1) There exists $\zeta\in (\frac{1}{2},1]$ such that for any $\epsilon\in (0,1)$, 
	\begin{equation}\label{ineq_main1}
		\mathbb{P}\big( \bm{\mathcal{C}}(\epsilon) \xleftrightarrow{\cup \mathcal{L}_{1/2}^{\mathbb{R}^3}} \bm{\mathcal{C}}(1) \big) = \epsilon^{\zeta+o(1)}.  
	\end{equation}

	(2)  For any $x\in \mathbb{R}^3$ and $r_1,r_2>0$, 
	\begin{equation}\label{ineq_main_2}
 	\mathbb{P}\big( \overline{\mathrm{dim}}_{\mathrm{B}}\big(\mathcal{C}\cap \bm{\mathcal{B}}_x(r_1)\big)\le 3-\zeta, \forall  \mathcal{C}\in \mathfrak{C}_{r_2}\big)  =1. 
	\end{equation}
	Consequently, the upper box-counting dimension of every loop cluster of the Brownian loop soup $\mathcal{L}_{1/2}^{\mathbb{R}^3}$ is strictly less than $\frac{5}{2}$.

\end{theorem}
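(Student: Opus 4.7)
The plan is to prove (1) first and then deduce (2) from (1) by a first-moment covering argument.

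\textbf{Part (1): existence of $\zeta$ and the bound $\zeta\le 1$.} Define $p(\epsilon):=\mathbb{P}\bigl(\bm{\mathcal{C}}(\epsilon)\xleftrightarrow{\cup\mathcal{L}_{1/2}^{\mathbb{R}^3}}\bm{\mathcal{C}}(1)\bigr)$. The Brownian loop measure on $\mathbb{R}^3$ is invariant under Brownian scaling, so $p(\epsilon)$ only depends on the ratio and $\mathbb{P}(\bm{\mathcal{C}}(r\epsilon)\leftrightarrow \bm{\mathcal{C}}(r))=p(\epsilon)$ for every $r>0$. The first step is to establish a quasi-multiplicativity estimate
\[
c\,p(\epsilon_1)p(\epsilon_2)\le p(\epsilon_1\epsilon_2)\le C\,p(\epsilon_1)p(\epsilon_2),\qquad \epsilon_1,\epsilon_2\in(0,1),
\]
by decomposing $\mathcal{L}_{1/2}^{\mathbb{R}^3}$ according to which dyadic annulus each loop crosses (these are independent Poisson families of finite intensity for any fixed annulus), and by using an FKG-type gluing in a deterministic annulus at the matching scale to recover the lower bound. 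Subadditivity of $-\log p$ then yields existence of $\zeta:=-\lim_{\epsilon\to 0}\log p(\epsilon)/\log\epsilon\in[0,\infty]$, and combined with the two-sided version above it upgrades to the sharp statement $p(\epsilon)=\epsilon^{\zeta+o(1)}$. The bound $\zeta\le 1$ is obtained by exhibiting a single macroscopic loop that enters $\bm{\mathcal{B}}(\epsilon)$: the loop measure restricted to loops of diameter in $[1,2]$ rooted near $\bm{0}$ has mass of order $1$, and by Newtonian-capacity estimates in three dimensions such a loop visits $\bm{\mathcal{B}}(\epsilon)$ with probability $\asymp\epsilon$, whence $p(\epsilon)\gtrsim\epsilon$.

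\textbf{The strict inequality $\zeta>1/2$.} This is the principal obstacle and is where the counter-evidence from the companion papers enters. I would use a KMT-based coupling, along the lines of \cite{KMT_sharp} and \cite{lawler2007random}, between the rescaled metric-graph loop soup $\tfrac{1}{N}\mathcal{L}_{1/2}^{\widetilde{\mathbb{Z}}^3}$ and the continuum loop soup $\mathcal{L}_{1/2}^{\mathbb{R}^3}$, valid for the family of macroscopic loops of continuum diameter bounded below by a fixed constant. Under this coupling, every continuum arm from $\bm{\mathcal{C}}(\epsilon)$ to $\bm{\mathcal{C}}(1)$ at the scale $\epsilon=1/N$ corresponds to a discrete one-arm event at scale $N$ realised purely by macroscopic loops (without any microscopic pivotal loop). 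By (\ref{ineq_first_pivotal}) the typical discrete arm at scale $N$ carries $\gtrsim N^{1/2}$ cut loops at scale $1$, and by (\ref{ineq_minimal_distance}) microscopic loops are, with uniformly positive probability, responsible for merging two macroscopic components at graph distance $o(1)$. Quantifying this via a conditional expectation / second-moment bound shows that conditionally on the discrete arm event at scale $N$, the subevent ``no microscopic loop is pivotal'' has probability decaying polynomially in $N$, yielding
\[
p(1/N)\le N^{-1/2}\cdot N^{-\delta+o(1)}
\]
for some explicit $\delta>0$ and hence $\zeta\ge 1/2+\delta$. The hard part is precisely this step: combining the qualitative KMT coupling with the quantitative asymptotics of \cite{inpreparation_twoarm,inpreparation_pivotal} to control the conditional Radon--Nikodym factor between the ``macroscopic-only'' and the full one-arm events, and to ensure that the gain $N^{-\delta}$ survives the coupling error.

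\textbf{Part (2): covering argument.} Fix $x\in\mathbb{R}^3$ and $r_1,r_2>0$. For each small $\epsilon>0$, cover $\bm{\mathcal{B}}_x(r_1)$ by $\asymp(r_1/\epsilon)^3$ Euclidean balls of radius $\epsilon$. If a ball $\bm{\mathcal{B}}_y(\epsilon)$ meets some $\mathcal{C}\in\mathfrak{C}_{r_2}$, then picking $w\in\mathcal{C}\cap\bm{\mathcal{B}}_y(\epsilon)$ and two points in $\mathcal{C}$ at distance $\ge r_2$ forces, by the triangle inequality, a loop-soup connection from $\bm{\mathcal{B}}_y(\epsilon)$ to $\bm{\mathcal{C}}_y(r_2/2)$. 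By translation and scale invariance and (\ref{ineq_main1}), this probability is $(\epsilon/r_2)^{\zeta-o(1)}$. The expected number of balls of the cover meeting $\bigcup_{\mathcal{C}\in\mathfrak{C}_{r_2}}\mathcal{C}$ is therefore $\lesssim\epsilon^{\zeta-3-o(1)}$. Applying Markov's inequality along the geometric sequence $\epsilon_k=2^{-k}$ and the Borel--Cantelli lemma gives, almost surely and simultaneously for every $\mathcal{C}\in\mathfrak{C}_{r_2}$, the bound $\overline{\mathrm{dim}}_{\mathrm{B}}(\mathcal{C}\cap\bm{\mathcal{B}}_x(r_1))\le 3-\zeta$. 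Since Part (1) gives $\zeta>1/2$, the final conclusion $3-\zeta<5/2$ follows.
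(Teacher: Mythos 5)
Your high-level plan matches the paper's: scaling invariance plus subadditivity for existence of $\zeta$, a KMT-type coupling with the rescaled discrete model together with the microscopic-loop estimates to force $\zeta>\tfrac12$, and a first-moment covering bound for Part (2). Part (2) and the derivation of $\zeta\le 1$ (via a single crossing loop, i.e.\ Lemma \ref{lemma_crossing_loop_mass} giving $q(\epsilon)\gtrsim\epsilon$) are essentially the paper's arguments and are fine. However, there are two substantive problems with your Part (1).

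First, for existence of $\zeta$ you assert a \emph{two-sided} quasi-multiplicativity and in particular a supermultiplicative lower bound obtained by ``FKG-type gluing.'' This is both overclaimed and unnecessary. Plain FKG on the Poisson process does not give the lower bound: the events $\{\bm{\mathcal C}(\epsilon_1\epsilon_2)\leftrightarrow\bm{\mathcal C}(\epsilon_2)\}$ and $\{\bm{\mathcal C}(\epsilon_2)\leftrightarrow\bm{\mathcal C}(1)\}$ are positively correlated, but that the two arms live in a single cluster requires a genuine gluing construction (the analogue on $\widetilde{\mathbb Z}^d$ occupies an entire paper \cite{cai2024quasi}). More importantly, the paper does not need it: existence of $\zeta$ follows from quasi-\emph{sub}multiplicativity alone (Corollary \ref{coro_quasi_brownian}), combined with the trivial linear lower bound $u_k\gtrsim -k$ coming from $q(\epsilon)\gtrsim\epsilon$, fed into a Fekete-type argument with constant offset. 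Your detour through the lower bound is the hard direction of quasi-multiplicativity and is not justified by what you have written.

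Second, the strict inequality $\zeta>\tfrac12$ is the crux, and your sketch does not constitute a route to it. Knowing that a typical discrete arm at scale $N$ carries $\gtrsim N^{1/2}$ pivotal microscopic loops (from \eqref{ineq_first_pivotal}) does not by itself yield that $\mathbb P(\text{no pivotal microscopic loop}\mid\text{arm})\le N^{-\delta}$; one needs a multiscale decomposition. The paper proves precisely this as Theorem \ref{theorem_const_removal_edge_loop}: the crossing probability after removing the edge loops in $\mathfrak L$ decays as $(n/N)^{1/2+\cref{const_removal_edge_loop3}}$, which is established in Section \ref{section_proof_crossing_probability} by an annulus-by-annulus partial-cluster argument, where each of the $\asymp\ln(N/n)$ scales contributes a uniformly positive probability of seeing a pivotal loop. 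That intermediate theorem is the engine, and it is not in your outline. You also do not address the coupling error: under the KMT-based coupling the corresponding discrete and continuum loops are within Hausdorff distance $\lesssim N^{-1}\ln N$, so a continuum intersection can correspond to a small gap between disjoint discrete loops. The paper isolates this in the event $\mathsf H$ and controls it via Lemma \ref{newlemma41}, a two-point/near-disjointness estimate for discrete loops; this is entirely missing from your proposal, and without it the inequality $\mathbb P(\mathsf F^{(2)})\lesssim\epsilon^{1/2+c}$ cannot be deduced from the discrete bound alone.
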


\begin{remark}
	(1) Since the Hausdorff dimension of a set must be upper-bounded by its upper box-counting dimension, it follows from Theorem \ref{theorem_main} that the Hausdorff dimension of every loop cluster of the Brownian loop soup $\mathcal{L}_{1/2}^{\mathbb{R}^3}$ is at most $3-\zeta$, and hence is strictly less than $\frac{5}{2}$. We expect that $3-\zeta$ is also a lower bound for both the Hausdorff dimension and the lower box-counting dimension. Establishing these bounds appears delicate and is beyond the scope of this paper.



	\noindent (2) When $d\ge 4$, Brownian motions in $\mathbb{R}^d$ do not intersect (see \cite[Theorem 9.1]{morters2010brownian}). As a result, every cluster of the Brownian loop soup $\mathcal{L}_{1/2}^{\mathbb{R}^d}$ is a single loop, and thus almost surely has Hausdorff and box-counting dimensions both equal to $2$ (since the Brownian motion does so; see e.g., \cite[Theorem 16.2]{falconer2013fractal}).


\end{remark}

We now turn to the discrete model $\mathcal{L}_{1/2}^{\widetilde{\mathbb{Z}}^3}$ featured in Conjecture \ref{conj_3d}. As shown independently in \cite[Theorem 1.2]{cai2024incipient} and \cite[Corollary 1.2]{drewitz2024cluster}, the volume of a loop cluster of $\mathcal{L}_{1/2}^{\widetilde{\mathbb{Z}}^d}$ at scale $N$ is typically of order $N^{(\frac{d}{2}+1)\land  4}$. Based on this estimate, it is plausible that the scaling limit of loop clusters of $\mathcal{L}_{1/2}^{\widetilde{\mathbb{Z}}^d}$ has fractal dimension $(\frac{d}{2}+1)\land  4$, provided this limit exists. Next, we present a rigorous statement of this property below the critical dimension $d_c=6$. (\textbf{P.S.} For $d<6$, the typical number of macroscopic clusters in a box of size $N$ is of order $1$, whereas for $d>6$ it is of order $N^{d-6} \gg 1$. This fundamental difference reveals that the scaling limits below and above the critical dimension belong to distinct universality classes.) For any $x\in \mathbb{R}^d$ and $r>0$, we denote by $\Lambda_x(r):=x+[-r,r]^d$ the box of side length $2r$ centered at $x$; when $x=\bm{0}$, we abbreviate $\Lambda_{\bm{0}}(r)$ as $\Lambda(r)$. For any $\delta >0$, let $\delta \cdot \widetilde{\mathbb{Z}}^d$ denote the graph obtained by scaling the metric graph $\widetilde{\mathbb{Z}}^d$ by a factor of $\delta$ and then embedding it into $\mathbb{R}^d$. We define $\delta \cdot \mathcal{L}_{\alpha}^{\widetilde{\mathbb{Z}}^d}$ as the loop soup on $\delta \cdot \widetilde{\mathbb{Z}}^d$ of intensity $\alpha$. In this paper, we say that the collection of loop clusters of $\mathcal{L}_{1/2}^{\widetilde{\mathbb{Z}}^d}$ has a scaling limit $\mathcal{S}^*$ if the following hold: 
\begin{enumerate}

	\item[(I)]  $\mathcal{S}^*$ is a collection of countably many subsets of $\mathbb{R}^d$;

	\item[(II)] Arbitrarily fix $k\in \mathbb{N}^+$, $\{x_i\}_{1\le i\le k}\subset \mathbb{R}^d$, $\{r_i\}_{1\le i\le k}\subset \mathbb{R}^+$ and $r'>0$. For $1\le i\le k$, let $\mathbf{M}_i^{(n)}$ (resp. $\mathbf{M}_i^*$) be the number of loop clusters of $n^{-1}\cdot \mathcal{L}_{1/2}^{\widetilde{\mathbb{Z}}^d}$ (resp. elements of $\mathcal{S}^*$) that have diameter at least $r'$ and intersect $\Lambda_{x_i}(r_i)$. Then $\big(\mathbf{M}_1^{(n)},...,\mathbf{M}_k^{(n)}\big)$ converges in distribution to $\big(\mathbf{M}_1^{*},...,\mathbf{M}_k^{*}\big)$ as $n\to \infty$.

\end{enumerate}

  \begin{theorem}\label{theorem_main_part2}
  	For any $3\le d\le 5$, if the collection of loop clusters of $\mathcal{L}_{1/2}^{\widetilde{\mathbb{Z}}^d}$ has a scaling limit $\mathcal{S}^*$, then $\mathcal{S}^*$ satisfies the following two properties: 
  \begin{enumerate}
  	\item   For any $\epsilon\in (0,1)$, 
  	\begin{equation}\label{ineq_main_part2_1}
  			\mathbb{P}\big( \bm{\mathcal{C}}(\epsilon) \xleftrightarrow{  \mathcal{S}^*} \bm{\mathcal{C}}(1) \big) \asymp  \epsilon^{\frac{d}{2}-1}.  
  	\end{equation}

  	\item	 For any $x\in \mathbb{R}^d$ and $r_1,r_2>0$,  	\begin{equation}\label{ineq_main_part2}
  \mathbb{P}\big( \exists \mathcal{C}^* \in \mathfrak{C}_{r_2}^*\ \text{such that}\ 
 \overline{\mathrm{dim}}_{\mathrm{B}}( \mathcal{C}^* \cap \bm{\mathcal{B}}_x(r_1)) =\tfrac{d}{2}+1\big) >0. 
	\end{equation} 
	Here $\mathfrak{C}_{r_2}^*$ denotes the collection of elements of $\mathcal{S}^*$ with diameter at least $r_2$.

  \end{enumerate}

  \end{theorem}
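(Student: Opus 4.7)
The plan is to reduce both claims to their discrete counterparts via the scaling limit hypothesis, and then to apply the one-arm estimate (\ref{one_arm_low}), the typical-volume estimate (\ref{result_typical_volume}), and the two-arm estimates (\ref{thm1_small_n})--(\ref{thm1_large_n}).

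For \emph{Part (1)}, let $N_n$ denote the number of loop clusters of $\mathcal{L}_{1/2}^{\widetilde{\mathbb{Z}}^d}$ that meet both $\partial B(\epsilon n)$ and $\partial B(n)$. The counting identity
\[
\sum_{x\in\partial B(\epsilon n)}\mathbbm{1}\{x\xleftrightarrow{} \partial B(n)\}\;=\;\sum_{\mathcal{C}\ \text{crossing}}|\mathcal{C}\cap\partial B(\epsilon n)|
\]
gives, on the left, $\asymp (\epsilon n)^{d-1}\theta_d(n)\asymp \epsilon^{d-1}n^{d/2}$ via (\ref{one_arm_low}); and on the right, (\ref{result_typical_volume}) shows that a typical crossing cluster has $\asymp (\epsilon n)^{d/2+1}$ sites in $B(\epsilon n)$, hence $\asymp (\epsilon n)^{d/2}$ sites on its $(d-1)$-dimensional boundary. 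Dividing, $\mathbb{E}[N_n]\asymp\epsilon^{d/2-1}$ uniformly in $n$, which yields the upper bound in (\ref{ineq_main_part2_1}) by Markov. For the matching lower bound I would use the two-arm estimate (\ref{thm1_large_n}) to show $\mathbb{E}[N_n(N_n-1)]\asymp \epsilon^{d/2+1}\ll\mathbb{E}[N_n]$ for small $\epsilon$; then $\mathbb{E}[N_n^2]\asymp\mathbb{E}[N_n]$ and Paley--Zygmund gives $\mathbb{P}(N_n\ge 1)\asymp\epsilon^{d/2-1}$. Approximating $\bm{\mathcal{C}}(\epsilon)$ and $\bm{\mathcal{C}}(1)$ by finite collections of small boxes, invoking property (II) of the scaling limit, and refining the cover transfer these bounds to $\mathcal{S}^*$.

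For \emph{Part (2)}, fix dyadic scales $\delta_k=2^{-k}$ and let $M_k$ be the number of $\delta_k$-boxes in $\bm{\mathcal{B}}_x(r_1)$ intersected by some cluster in $\mathfrak{C}_{r_2}^*$. Rescaling the first-moment argument of Part (1), with $\epsilon$ playing the role of $\delta_k/r_1$ and the outer scale set by $r_2$, together with property (II) yields $\mathbb{E}[M_k]\asymp (r_1/\delta_k)^{d/2+1}$; a matching second-moment bound based on (\ref{thm1_large_n}) provides concentration, so $\mathbb{P}(c(r_1/\delta_k)^{d/2+1}\le M_k\le C(r_1/\delta_k)^{d/2+1})\ge p_0$ for some $p_0>0$ independent of $k$. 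By reverse Fatou this two-sided bound holds for infinitely many $k$ on an event of probability $\ge p_0$. Since for $d<6$ only finitely many clusters in $\mathfrak{C}_{r_2}^*$ intersect any bounded region (a consequence of the first-moment computation above, cf.\ (\ref{crossing_low})), a pigeonhole argument extracts a single witness $\mathcal{C}^*$ whose $\delta_k$-covering numbers in $\bm{\mathcal{B}}_x(r_1)$ obey the two-sided bound along a subsequence, yielding $\overline{\mathrm{dim}}_{\mathrm{M}}(\mathcal{C}^*\cap\bm{\mathcal{B}}_x(r_1))=\tfrac{d}{2}+1$.

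The main obstacle is that property (II) of the scaling limit only records the count of boxes intersected by macroscopic clusters, not their joint cluster assignment. The ``same-cluster'' content of the events in both parts must therefore be recovered through fine box approximations, combined with the observation that below the critical dimension only finitely many macroscopic clusters populate any bounded region. Ensuring that the two-arm bound (\ref{thm1_large_n}) is tight enough to complete the second-moment argument, and that the per-scale concentration in Part (2) descends to the scaling limit uniformly in $k$, will be the most delicate technical step.
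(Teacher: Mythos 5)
Your overall strategy is on the right track for Part~(2) but over-engineered for Part~(1), and both parts have gaps.

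\textbf{Part (1).} The paper's argument here is much shorter than what you propose. The event $\bm{\mathcal{C}}(\epsilon)\xleftrightarrow{\mathcal{S}^*}\bm{\mathcal{C}}(1)$ is sandwiched (up to constants on the radii) between the events $\mathsf{F}_{r,R}=\{\exists\,\mathcal{C}^*\in\mathfrak{C}_R^*:\ \mathcal{C}^*\cap\Lambda(r)\neq\emptyset\}$, which are exactly of the form covered by Property~(II); then $\mathbb{P}(\mathsf{F}_{r,R})=\lim_n\mathbb{P}(\mathsf{F}_{r,R}^n)$, and the discrete probability $\mathbb{P}(\mathsf{F}_{r,R}^n)$ is sandwiched by crossing probabilities $\rho_d(nr,c nR)$ and $\rho_d(nr,C nR)$, which are both $\asymp(r/R)^{d/2-1}$ by (\ref{crossing_low}). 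No moment computation is needed: the crossing probability $\rho_d$ is already the quantity you want, and $\mathbb{P}(N_n\ge 1)$ is literally $\rho_d(\epsilon n,n)$. Your counting identity and first-moment estimate for $\mathbb{E}[N_n]$ introduce an unjustified step: you infer that a typical crossing cluster has $\asymp(\epsilon n)^{d/2}$ sites on $\partial B(\epsilon n)$ from the fact that it has $\asymp(\epsilon n)^{d/2+1}$ sites in $B(\epsilon n)$. The volume estimate (\ref{result_typical_volume}) controls the bulk volume, not the trace on a codimension-one sphere; concluding the boundary count by ``dropping one power'' is a heuristic that would need its own proof.

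\textbf{Part (2).} Your high-level plan (Fatou on a sequence of box-count events, plus the observation that only finitely many clusters in $\mathfrak{C}_{r_2}^*$ meet a bounded region, plus a pigeonhole to extract a single witness) matches the paper's proof of (\ref{claim59})--(\ref{claim510}) and (\ref{new57}). Two issues remain. First, for the second moment of the box count $\mathbb{J}_{k,n}$ the paper does not invoke the two-arm estimate (\ref{thm1_large_n}); instead it splits the event $\mathsf{J}^{z_1}_{k,n}\cap\mathsf{J}^{z_2}_{k,n}$ into (i) two \emph{distinct} clusters reaching the two boxes, handled by BKR giving $\mathbb{P}(\mathsf{J}^{z_1})\mathbb{P}(\mathsf{J}^{z_2})$, and (ii) the \emph{same} cluster reaching both, handled by quasi-multiplicativity (\cite[Lemma~5.2]{cai2024quasi}) and (\ref{crossing_low}). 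The metric-graph two-arm estimate (\ref{thm1_large_n}) is a different quantity and doesn't directly supply the bound you need on $\mathbb{E}[\mathbb{J}_{k,n}^2]$. Second, and more importantly, you state the conclusion $\overline{\mathrm{dim}}_{\mathrm{M}}(\mathcal{C}^*\cap\bm{\mathcal{B}}_x(r_1))=\tfrac{d}{2}+1$ from a two-sided control of $M_k$ that only holds \emph{for infinitely many} $k$. Because the upper box dimension is a $\limsup$, an upper bound on $M_k$ along a subsequence does not bound the dimension from above; you need the bound $M_k\lesssim\delta_k^{-(d/2+1+a)}$ to hold for \emph{all} large $k$ almost surely, for every $a>0$. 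In the paper this upper bound is a separate statement (\ref{new56}), obtained by rerunning the first-moment/Borel--Cantelli argument of Section~4.3 with $\zeta$ replaced by $d/2-1$; your proposal only secures the lower bound on the dimension via the positive-probability event.
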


  \begin{remark}
(1) Since $\zeta>\frac{d}{2}-1$ and $3-\zeta<\frac{d}{2}+1$ for $\zeta\in (\frac{1}{2},1]$ and $d=3$, the one-arm exponent in (\ref{ineq_main1}) does not match that in (\ref{ineq_main_part2_1}), and the property of $\mathfrak{C}_{r_2}$ in (\ref{ineq_main_2}) contradicts that of $\mathfrak{C}_{r_2}^*$ in (\ref{ineq_main_part2}). As a result, Theorems \ref{theorem_main} and \ref{theorem_main_part2} together imply that if the scaling limit of loop clusters of $\mathcal{L}_{1/2}^{\widetilde{\mathbb{Z}}^3}$ exists (in the sense described before Theorem \ref{theorem_main_part2}), this limit cannot be the loop clusters of $\mathcal{L}_{1/2}^{\mathbb{R}^3}$.


\noindent (2)  In fact, the definition of scaling limit given before Theorem \ref{theorem_main_part2} is rather weak. Usually, convergence of a discrete percolation model to a continuum one requires that after rescaling, for every $r>0$, with high probability there exists a bijection between their clusters with diameter at least $r$ such that the Hausdorff distance between each corresponding pair of clusters vanishes (see e.g., \cite{camia2006two} and \cite{lupu2019convergence} for the critical Bernoulli percolation and the critical loop soup respectively, both on two-dimensional graphs). Such a requirement immediately implies the conditions for the scaling limit in this paper.

	\noindent (3) It is clear from our proof in Section \ref{section5_S_star} that Theorem \ref{theorem_main_part2} also holds whenever a subsequential scaling limit exists. For simplicity, we state this theorem assuming the existence of the scaling limit.



\end{remark}

%

Let $\mathfrak{L}$ denote the point measure consisting of the loops in $\mathcal{L}_{1/2}^{\widetilde{\mathbb{Z}}^d}$ that are contained in a single interval $I_e$ for some $e\in \mathbb{L}^d$ and intersect both of its trisection points. The key to proving Theorem \ref{theorem_main} is to show that removing all loops in $\mathfrak{L}$ (which are at scale $1$) will significantly reduce the crossing probability; see (\ref{crossing_low}) and (\ref{crossing_high}) for the bounds without removal. Although the derivation of Theorem \ref{theorem_main} requires only the three-dimensional case, we still present this result in full generality, as the higher-dimensional cases are interesting in their own right.

\begin{theorem}\label{theorem_const_removal_edge_loop}
For any $d\ge 3$ with $d\neq 6$, there exists a constant $c>0$ depending only on $d$ such that for any $N > n \ge 1$,
     \begin{align}
	&\text{when}\ 3\le d<6,\ \  \mathbb{P}\big( B(n) \xleftrightarrow{\cup (  \mathcal{L}_{1/2}^{\widetilde{\mathbb{Z}}^d}- \mathfrak{L} ) }  \partial B(N) \big)\lesssim       \big(n/N \big)^{\frac{d}{2}-1+c}; \label{goodineq_128} \\ 
	&\text{when}\ d>6, \ \ \ \ \   \ \  \mathbb{P}\big( B(n) \xleftrightarrow{\cup (  \mathcal{L}_{1/2}^{\widetilde{\mathbb{Z}}^d}- \mathfrak{L} ) }  \partial B(N) \big)\lesssim       \big( n^{\frac{d-4}{2}} /N  \big)^{2+c}. \label{goodineq_129}
	 \end{align}
\end{theorem}

   \begin{remark}\label{remark1.7_Zd}
	Under the coupling between $\mathcal{L}_{1/2}^{\mathbb{Z}^d}$ and $\mathcal{L}_{1/2}^{\widetilde{\mathbb{Z}}^d}$, every fundamental loop in $\mathcal{L}_{1/2}^{\mathbb{Z}^d}$ is contained in its corresponding loop in $\mathcal{L}_{1/2}^{\widetilde{\mathbb{Z}}^d}$. Moreover, all these loops in $\mathcal{L}_{1/2}^{\widetilde{\mathbb{Z}}^d}$ belong to $\mathcal{L}_{1/2}^{\widetilde{\mathbb{Z}}^d}-\mathfrak{L}$, since $\mathfrak{L}$ consists only of edge loops. As a result, $\cup \mathcal{L}_{1/2}^{\mathbb{Z}^d}$ is stochastically dominated by $\cup (  \mathcal{L}_{1/2}^{\widetilde{\mathbb{Z}}^d}- \mathfrak{L} )$, which implies that the analogues of (\ref{goodineq_128}) and (\ref{goodineq_129}) with $\mathcal{L}_{1/2}^{\widetilde{\mathbb{Z}}^d}- \mathfrak{L}$ replaced by $\mathcal{L}_{1/2}^{\mathbb{Z}^d}$ also hold. This improves the previous bounds in \cite[Theorem 1.3]{chang2016phase}.

\end{remark}


To conclude this section, we discuss two possible sources of the gap between the cluster dimensions of $\mathcal{L}_{1/2}^{\mathbb{R}^3}$ and $\mathcal{L}_{1/2}^{\widetilde{\mathbb{Z}}^3}$, as well as some potential research directions. The first possibility has been pointed out in \cite{werner2021clusters}. Specifically, recall that when $d\ge 4$, all loops in $\mathcal{L}_{1/2}^{\mathbb{R}^d}$ are disjoint from each other, and hence every loop cluster of $\mathcal{L}_{1/2}^{\mathbb{R}^d}$ has dimension $2$. Meanwhile, the scaling limit of loop clusters of $\mathcal{L}_{1/2}^{\widetilde{\mathbb{Z}}^d}$ for $d\in \{4,5\}$ is expected to be $(\frac{d}{2}+1)$-dimensional. To bridge this dimensional gap, it was conjectured in \cite{werner2021clusters} that when $d\in \{4,5\}$, this scaling limit can be recovered by imposing a gluing mechanism between loops in $\mathcal{L}_{1/2}^{\mathbb{R}^d}$; moreover, this mechanism was conjectured to be  deterministic (i.e., measurable with respect to $\mathcal{L}_{1/2}^{\mathbb{R}^d}$) when $d=4$, and to be stochastic (i.e., requiring extra randomness beyond $\mathcal{L}_{1/2}^{\mathbb{R}^d}$) when $d=5$. By Theorems \ref{theorem_main} and \ref{theorem_main_part2}, this gluing mechanism may also be required in the three-dimensional case (though it was initially conjectured otherwise; see Conjecture \ref{conj_3d}). In summary, the gap between cluster dimensions might be characterized by a gluing mechanism, which essentially captures the contribution of microscopic loops to the construction of macroscopic loop clusters.

     Another possibility is that the aforementioned gap cannot be filled by gluing loops, but instead requires increasing the intensity of the loop soup. This naturally leads to the following question: 
       \begin{center}
  	\textit{Is the critical intensity of $\mathcal{L}_{\alpha}^{\mathbb{R}^3}$ equal to $\frac{1}{2}$? }
  \end{center}
  By analogy, it is also compelling to ask whether $\mathcal{L}_{\alpha}^{\mathbb{Z}^3}$ has critical intensity $\frac{1}{2}$. Next, we briefly review the relevant existing results. Recall that it has been verified in \cite{lupu2016loop, chang2024percolation} that for any $d\ge 3$, the critical intensity of $\mathcal{L}_{\alpha}^{\widetilde{\mathbb{Z}}^d}$ equals $\frac{1}{2}$. Note that the proof of this equality mainly relies on the formula (\ref{two_point_function}) for the two-point function, which has no direct analogue in $\mathbb{R}^3$ or $\mathbb{Z}^3$. For $\mathcal{L}_{\alpha}^{\mathbb{R}^3}$, it follows from Theorem \ref{theorem_main} that $\mathcal{L}_{1/2}^{\mathbb{R}^3}$ does not percolate, and thus its critical intensity is at least $\frac{1}{2}$. The same lower bound holds for $\mathcal{L}_{\alpha}^{\mathbb{Z}^3}$, since its range is stochastically dominated by that of $\mathcal{L}_{\alpha}^{\widetilde{\mathbb{Z}}^3}$. To sum up, it remains to confirm whether there exists $\epsilon>0$ such that $\mathcal{L}_{1/2+\epsilon}^{\mathbb{R}^3}$ (or $\mathcal{L}_{1/2+\epsilon}^{\mathbb{Z}^3}$) does not percolate. Notably, in the two-dimensional case, such an $\epsilon>0$ is absent: it has been proved in \cite{lupu2016loop2d} that for any $\alpha>\frac{1}{2}$, the loop soup of intensity $\alpha$ on the discrete half-plane $\mathbb{Z}\times \mathbb{N}$ percolates.

  To the best of our knowledge, no mathematical evidence exists to support either of these two possibilities. This requires further research to determine.

    \textbf{Organization of this paper.} Section \ref{section_pre} reviews preliminary results on the loop measure $\mu^{\Omega}$ and the loop soup $\mathcal{L}_{\alpha}^{\Omega}$ for $\Omega\in \{\mathbb{Z}^d, \mathbb{R}^d, \widetilde{\mathbb{Z}}^d\}$. Section \ref{section_proof_crossing_probability} presents the proof of Theorem \ref{theorem_const_removal_edge_loop}. Our main result, Theorem \ref{theorem_main}, is then established in Section \ref{section_existence_one_arm}. Finally, Section \ref{section5_S_star} is devoted to the proof of Theorem \ref{theorem_main_part2}.

 \textbf{Statements about constants.} We adhere to the following conventions:

 \begin{itemize}

 	\item The letters $C$ and $c$ denote positive constants that may vary according to the context. Here the letter $C$ (with or without subscripts) represents large constants, while $c$ represents small ones.


 	\item Unless stated otherwise, a constant depends only on the dimension $d$.


 \end{itemize}

\section{Preliminaries}\label{section_pre}

In this section, we review some useful properties of the loop measure $\mu^{\Omega}$ and the loop soup $\mathcal{L}_{\alpha}^{\Omega}$ for $\Omega\in \{\mathbb{Z}^d, \mathbb{R}^d, \widetilde{\mathbb{Z}}^d\}$.

\textbf{Embedding paths into $\mathbb{R}^d$.} Unless otherwise specified, we consider $\mathbb{Z}^d$ as a subset of $\mathbb{R}^d$ under the canonical embedding. This allows one to regard a discrete path $\eta=(x_0, \ldots, x_k)$ (where $k\in \mathbb{N}^+$) as a continuous curve in $\mathbb{R}^d$ through linear interpolation, i.e., $\eta(t):=  (1-t )\cdot x_i+t \cdot x_{i+1}$ for $0\le i\le k-1$ and $i \le t\le  i+1$. The canonical embedding of $\mathbb{Z}^d$ also induces an embedding of $\widetilde{\mathbb{Z}}^d$ into $\mathbb{R}^d$, where each interval $I_e$ is scaled by $\frac{1}{d}$. This embedding identifies any path on $\widetilde{\mathbb{Z}}^d$ with a curve in $\mathbb{R}^d$. In particular, any loop on $\mathbb{Z}^d$ or $\widetilde{\mathbb{Z}}^d$ can be viewed as a curve in $\mathbb{R}^d$.

 \textbf{Coupling between discrete and continuum loop soups.} We now review the coupling in \cite{sapozhnikov2018brownian} between $\mathcal{L}_{\alpha}^{\mathbb{Z}^d}$ and $\mathcal{L}_{\alpha}^{\mathbb{R}^d}$. We first record some necessary notations. For any curve $\eta$ in $\mathbb{R}^d$, we denote its duration by $T_{\eta}$. For two curves $\eta_1$ and $\eta_2$ in $\mathbb{R}^d$, their Fr\'echet distance is defined by 
\begin{equation}
	\mathrm{d}_{\mathrm{F}}(\eta_1,\eta_2):= \inf_{\varphi_1,\varphi_2} \sup_{t\in [0,1]} \big\| \eta_1\big(\varphi_1(t)\big)-\eta_2\big(\varphi_2(t)\big)  \big\|,
\end{equation}
where each $\varphi_i$ is a continuous, non-decreasing surjection from $[0,1]$ to $[0,T_{\eta_i}]$. For any $N \ge 1$, we define the following operator $\varphi_{N}$ acting on curves in $\mathbb{R}^d$: 
 \begin{equation}
 	\varphi_N[\eta](t)= N^{-1}\cdot  \eta(dN^2t),  \ \ \forall t\in \big[0, (dN^2)^{-1}T_{\eta}\big].  
 	 \end{equation}
  Here the scaling factors are chosen in accordance with the invariance principle: the covariance of a simple random walk on $\mathbb{Z}^d$ after $dN^2$ steps is $N^2\cdot \mathbb{I}_d$, where $\mathbb{I}_d$ is the $d$-dimensional identity matrix. For any $\alpha,r,\theta>0$ and $N\ge 1$, let
  \begin{equation}\label{mathcalA_Zd}
	\mathcal{A}_{\alpha,r,\theta,N}^{\mathbb{Z}^d} := \big\{ \ell'= \varphi_N[\ell]: \ell\in  \mathcal{L}_{\alpha}^{\mathbb{Z}^d}, T_{\ell}>  N^{\theta} , N^{-1}\cdot  \ell(0) \in \Lambda( r )  \big\}.
\end{equation} 
 Similarly, for the Brownian loop soup $\mathcal{L}_{\alpha}^{\mathbb{R}^d}$, we define  \begin{equation}\label{mathcalA_Rd}
	\mathcal{A}_{\alpha,r,\theta,N}^{\mathbb{R}^d} := \big\{   \ell\in  \mathcal{L}_{\alpha}^{\mathbb{R}^d}:	 T_{ \ell}  > \big(\tfrac{N^\theta}{d}-\tfrac{d+4}{2d(d+2)}\big) N^{-2} ,  	 \ell(0) \in \Lambda( r+ \tfrac{1}{2N} ) \big\}. 
\end{equation}



  \begin{lemma}[{\cite[Theorem 2.2]{sapozhnikov2018brownian}}]\label{lemma_coupling}
 	For any $d\ge 2$ and $\alpha>0$, there exists a constant $C>0$ and a coupling of $\mathcal{L}_{\alpha}^{\mathbb{Z}^d}$ and $\mathcal{L}_{\alpha}^{\mathbb{R}^d}$ such that the following holds. For any $r\ge 1$, $N\ge 1$ and $\theta\in (\frac{2d}{d+4},2)$, with probability at least $1-C(\alpha+1)r^dN^{-\min\{\frac{d}{2}, \theta(\frac{d}{2}+2)-d \}}$, there exists a bijection $\varpi_{\alpha,r,\theta,N}:\mathcal{A}_{\alpha,r,\theta,N}^{\mathbb{R}^d}\to \mathcal{A}_{\alpha,r,\theta,N}^{\mathbb{Z}^d}$ such that  
 	 \begin{equation}\label{final2.8}
 	\mathrm{d}_{\mathrm{F}}\big(\ell,\varpi_{\alpha,r,\theta,N}(\ell)\big) \le CN^{-\frac{1}{4}}\mathrm{ln}(N), \   \forall \ell\in \mathcal{A}_{\alpha,r,\theta,N}^{\mathbb{R}^d}. 
 \end{equation} 
 	 	  \end{lemma}

\textbf{Crossing loops.} We recall the following useful estimate on the total mass of Brownian loops that cross an annulus.

 \begin{lemma}\label{lemma_crossing_loop_mass}
	For any $d\ge 3$, $r>0$ and $\lambda>1$, 
	   \begin{equation}\label{mass_crossing_loop}
 		 \mu^{\mathbb{R}^d}\big( \big\{ \ell: \ell \ \text{intersects}\ \bm{\mathcal{C}}(r)\ \text{and}\ \bm{\mathcal{C}}(\lambda  r) \big\} \big)\asymp \lambda^{2-d}. 
 	\end{equation}
 	\end{lemma}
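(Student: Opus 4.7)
The plan is to combine the Brownian scaling invariance of $\mu^{\mathbb{R}^d}$ with a first-hit decomposition of the Brownian bridges appearing in (\ref{def_loop_measure}). Under the joint space--time rescaling $(v,t)\mapsto (\rho v,\rho^2 t)$, the factor $\rho^d$ from the base-point Lebesgue measure cancels the factor $\rho^{-d}$ coming from $q_t^{\mathbb{R}^d}(v,v)=(2\pi t)^{-d/2}$ under $t\mapsto \rho^2 t$, while $t^{-1}\,dt$ is invariant and the bridge measure $\mathbb{P}_{v,v,t}^{\mathbb{R}^d}$ transforms covariantly; hence $\mu^{\mathbb{R}^d}$ is scale invariant, and choosing $\rho=r^{-1}$ reduces the claim to showing $\mu^{\mathbb{R}^d}(\ell\cap \bm{\mathcal{C}}(1)\neq \emptyset,\ \ell\cap \bm{\mathcal{C}}(\lambda)\neq \emptyset)\asymp \lambda^{2-d}$.

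For the upper bound I would split the base-point integral in (\ref{def_loop_measure}) according to $|v|\le 1$, $1<|v|<\lambda$, and $|v|\ge \lambda$, and treat the dominant region $|v|\ge \lambda$ in detail. There the crossing event reduces to the bridge hitting $\bm{\mathcal{B}}(1)$, and a first-hit decomposition at $\tau:=\tau_{\bm{\mathcal{B}}(1)}$ gives
\[
\int_0^\infty \tfrac{dt}{t}\,q_t^{\mathbb{R}^d}(v,v)\,\mathbb{P}_{v,v,t}^{\mathbb{R}^d}(\tau<t) \;=\; \int \mathbb{P}_v^{\mathbb{R}^d}(\tau\in ds,\,X_\tau\in dy)\int_0^\infty \tfrac{q_u^{\mathbb{R}^d}(y,v)}{s+u}\,du.
\]
Bounding the inner $u$-integral by $O(\min(|v|^{-d},\,|v|^{2-d}/s))$ via Gaussian estimates on $q_u^{\mathbb{R}^d}$, and combining with the classical transient hitting estimates $\mathbb{P}_v^{\mathbb{R}^d}(\tau<\infty)\asymp |v|^{2-d}$ and $\int \mathbb{P}_v^{\mathbb{R}^d}(\tau\in ds)/s = O(|v|^{-d})$ in dimension $d\ge 3$, the outer $s$-integral evaluates to $O(|v|^{2-2d})$; integrating against $r^{d-1}\,dr$ over $[\lambda,\infty)$ gives $\asymp \lambda^{2-d}$. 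The other two base-point regions are handled by symmetric arguments (using the first exit time from $\bm{\mathcal{B}}(\lambda)$ in place of the first entry into $\bm{\mathcal{B}}(1)$ when $|v|\le 1$, and a combined two-hit decomposition when $1<|v|<\lambda$) and each contributes at most $O(\lambda^{2-d})$.

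For the lower bound I would restrict to loops with $|v|\asymp \lambda$ and $t\asymp \lambda^2$; on this slice the Lebesgue volume is $\asymp \lambda^d$, $(2\pi t)^{-d/2}\asymp \lambda^{-d}$, and $\int t^{-1}\,dt$ over a constant multiplicative range is $\asymp 1$. It remains to show $\mathbb{P}_{v,v,t}^{\mathbb{R}^d}(\tau_{\bm{\mathcal{B}}(1)}\le t/2)\gtrsim \lambda^{2-d}$, which follows from the Radon--Nikodym identity
\[
\tfrac{d\mathbb{P}_{v,v,t}^{\mathbb{R}^d}}{d\mathbb{P}_v^{\mathbb{R}^d}}\bigg|_{\mathcal{F}_{t/2}} \;=\; \tfrac{q_{t/2}^{\mathbb{R}^d}(X_{t/2},v)}{q_t^{\mathbb{R}^d}(v,v)},
\]
which is $\gtrsim 1$ on the positive-probability event $\{|X_{t/2}-v|\le C\sqrt{t}\}$, together with the classical estimate $\mathbb{P}_v^{\mathbb{R}^d}(\tau_{\bm{\mathcal{B}}(1)}\le t/2)\asymp \lambda^{2-d}$ for $|v|\asymp \lambda$ and $t\asymp \lambda^2$. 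Multiplying the four factors yields the lower bound $\lambda^d\cdot \lambda^{-d}\cdot 1\cdot \lambda^{2-d}=\lambda^{2-d}$. I expect the main technical step to be the sharp inner bound $O(|v|^{2-2d})$ in the upper bound: a naive bridge-to-BM comparison $\mathbb{P}_{v,v,t}^{\mathbb{R}^d}(\tau<t)\lesssim \mathbb{P}_v^{\mathbb{R}^d}(\tau<t)$ yields only $O(|v|^{-d})$ per unit base-point volume, which is logarithmically divergent when integrated over $|v|\ge \lambda$; the first-hit decomposition is precisely what extracts the additional Green's-function factor $|v|^{2-d}$ corresponding to the bridge's return from the hitting point back to $v$, restoring the correct $|v|^{2-2d}$ decay.
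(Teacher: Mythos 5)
Your strategy (Brownian scaling to reduce to $r=1$, then a first-hit decomposition of the Brownian loop measure to pull out the Green's-function factor from the return leg of the bridge) is sound and does yield the lemma. The paper, by contrast, gives no direct argument here: it simply cites the discrete analogues on $\mathbb{Z}^d$ and on $\widetilde{\mathbb{Z}}^d$ and asserts that replacing lattice hitting probabilities by Brownian hitting densities carries those proofs over. Your write-up is therefore filling in the step the paper declares routine; conceptually it is the same mechanism (the discrete proofs also rest on a first-entry/last-exit decomposition of rooted loops), but yours is a genuinely self-contained continuum argument. Your scaling computation, the identity turning $\int t^{-1}q_t(v,v)\mathbb{P}_{v,v,t}(\tau<t)\,dt$ into $\int\mathbb{P}_v(\tau\in ds,X_\tau\in dy)\int_0^\infty q_u(y,v)/(s+u)\,du$, the bounds on the inner integral, and the Radon--Nikodym argument for the lower bound are all correct, and your closing remark correctly identifies where the extra factor $|v|^{2-d}$ comes from.

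Two caveats. First, the intermediate region $1<|v|<\lambda$ is not handled by the same single first-hit decomposition ``by a symmetric argument'': decomposing only at $\tau_{\bm{\mathcal{B}}(1)}$ gives $\asymp |v|^{2-2d}$ per unit volume, and $\int_1^\lambda r^{d-1}\,r^{2-2d}\,dr \asymp 1$, not $O(\lambda^{2-d})$. You do flag a ``two-hit decomposition,'' and it is genuinely required: after hitting $\bm{\mathcal{C}}(1)$ one must further decompose at $\tau_{\bm{\mathcal{C}}(\lambda)}$, which forces the total elapsed time to be $\gtrsim \lambda^2$ and replaces the factor $|v|^{-d}$ by $|v|^{2-d}\lambda^{-2}$, giving $\lesssim |v|^{2-d}\lambda^{-d}$ per unit volume, whose integral over $1<|v|<\lambda$ is indeed $\lesssim\lambda^{2-d}$. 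This is the most delicate piece of the upper bound and should be written out. Second, your estimates use $|y-v|\asymp|v|$ for $y\in\bm{\mathcal{C}}(1)$, which requires $|v|$ (hence $\lambda$) to be bounded away from $1$. That is actually a defect of the lemma as stated in the paper, not of your proof: for $\lambda\downarrow 1$ the mass of small loops crossing the thin annulus scales like $(\lambda-1)^{1-d}$ and diverges, so the claimed $d$-only constant in ``$\asymp\lambda^{2-d}$'' cannot be uniform over all $\lambda>1$ in the continuum (unlike in the lattice analogues, where the unit-edge cutoff prevents this). The paper only ever invokes the lemma with $\lambda$ large, so no downstream error results, but your proof should state, say, $\lambda\ge 2$ explicitly.
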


 We refer to \cite[Lemma 2.7]{chang2016phase} for the analogue of this lemma on $\mathbb{Z}^d$, and to \cite[Lemma 2.10]{cai2024quasi} for that on $\widetilde{\mathbb{Z}}^d$. To extend either result to $\mathbb{R}^d$, it suffices to replace the role of the hitting probabilities of random walks with the hitting densities of Brownian motion. We therefore omit this straightforward extension.

 Lemma \ref{lemma_crossing_loop_mass} provides the following lower bound for the one-arm probability:  
 \begin{equation}\label{newadd210}
	\begin{split}
		  \mathbb{P}\big( \bm{\mathcal{C}}(\epsilon) \xleftrightarrow{\cup \mathcal{L}_{1/2}^{\mathbb{R}^3}} \bm{\mathcal{C}}(1) \big)  
		\ge  & 	\mathbb{P}\big( \exists \ell \in \mathcal{L}_{1/2}^{\mathbb{R}^3}  \ \text{intersecting}\ \bm{\mathcal{C}}(\epsilon)\ \text{and}\ \bm{\mathcal{C}}(1)   \big) \asymp \epsilon.
	\end{split}
\end{equation}
This bound implies that the one-arm exponent $\zeta\le 1$.

\begin{remark}[strict upper bound for $\zeta$]\label{remark_zeta_<1}
As we sketch below, the bound $\zeta\le 1$ can be strengthened to $\zeta<1$, i.e., there exists a constant $c>0$ such that  
\begin{equation}\label{improve_2.8}
  \mathbb{P}\big( \bm{\mathcal{C}}(\epsilon)
  \xleftrightarrow{\cup \mathcal{L}_{1/2}^{\mathbb{R}^3}}
  \bm{\mathcal{C}}(1) \big)
  \gtrsim \epsilon^{1-c}, \ \forall \epsilon\in (0,1). 
\end{equation}
We now justify (\ref{improve_2.8}). Take a sufficiently large constant $\lambda>1$. For any $i\in \mathbb{N}^+$ and $j\in \{-1,0,1\}$, we denote $r_i^{(j)}:=\lambda^{3i+j}\epsilon$. Let $i_\star:=\min\{i: r_{i+2}^{(0)}\ge 1  \}$, and note that $i_\star\asymp \mathrm{ln}(\epsilon^{-1})$. We say that the loops of $\mathcal{L}_{1/2}^{\mathbb{R}^3}$ in the annulus $\bm{\mathcal{B}}(r_{i+1}^{(0)})\setminus \bm{\mathcal{B}}(r_i^{(0)})$ are good (denoted by $\mathsf{G}_i$) if the following holds: conditioned on these loops, the probability for two independent Brownian motions $\eta_{i}$ and $\eta_{i}'$ to be connected via them is at least $\frac{1}{2}$, where $\eta_{i}$ (resp. $\eta_{i}'$) starts from $\bm{\mathcal{C}}(r_{i}^{(1)})$ (resp. $\bm{\mathcal{C}}(r_{i+1}^{(-1)})$) and is stopped upon hitting $\bm{\mathcal{C}}(r_{i}^{(0)})$ (resp. $\bm{\mathcal{C}}(r_{i+1}^{(0)})$). By construction, $\mathsf{G}_i$ depends only on the loops contained in $\bm{\mathcal{B}}(r_{i+1}^{(0)})\setminus \bm{\mathcal{B}}(r_i^{(0)})$. Since these annuli are disjoint, the events $\{\mathsf{G}_i\}_{i\in \mathbb{N}^+}$ are independent. In addition, since Brownian loops in $\mathbb{R}^3$ have positive capacities, the probability of $\mathsf{G}_i$ is uniformly bounded away from zero. Therefore, it follows from Hoeffding's inequality that for a sufficiently small $c'>0$,  
\begin{equation}\label{improve_2.9}
	\mathbb{P}\big(\overline{\mathsf{G}}\big):=  \mathbb{P}\Big(\sum\nolimits_{1\le i\le i_\star} \mathbbm{1}_{\mathsf{G}_i}\ge c'\mathrm{ln}(\epsilon^{-1})\Big) \ge  \tfrac{1}{2}.		\end{equation}
 As shown in Lemma \ref{lemma_crossing_loop_mass}, the energy cost of crossing the annulus $\bm{\mathcal{B}}(R)\setminus \bm{\mathcal{B}}(r)$ is of order $r/R$. Thus, if the loops in $\bm{\mathcal{B}}(r_{i+1}^{(0)})\setminus \bm{\mathcal{B}}(r_i^{(0)})$ are good, then one may replace a loop crossing this annulus by two loops crossing
$\bm{\mathcal{B}}(r_{i}^{(1)})\setminus \bm{\mathcal{B}}(r_i^{(0)})$ and $\bm{\mathcal{B}}(r_{i+1}^{(0)})\setminus \bm{\mathcal{B}}(r_{i+1}^{(-1)})$ respectively, and connect them via these good loops; this modification reduces the total energy cost by a factor of order $\lambda^{-1}$. Consequently, given that $\overline{\mathsf{G}}$ occurs, the conditional probability of $\big\{\bm{\mathcal{C}}(\epsilon)
  \xleftrightarrow{\cup \mathcal{L}_{1/2}^{\mathbb{R}^3}}
  \bm{\mathcal{C}}(1)\big\}$ is at least 
  \begin{equation}
  \begin{split}
  	  \mathbb{P}\big( \exists \ell \in \mathcal{L}_{1/2}^{\mathbb{R}^3}  \ \text{intersecting}\ \bm{\mathcal{C}}(\epsilon)\ \text{and}\ \bm{\mathcal{C}}(1)   \big) \cdot (c''\lambda)^{c'\mathrm{ln}(\epsilon^{-1})} \gtrsim \epsilon^{1-c}. 
  \end{split}
  \end{equation}
Combined with (\ref{improve_2.9}), it implies the strengthened bound (\ref{improve_2.8}). 
\end{remark}

 \textbf{Decomposition of loops.} In this part, we review a decomposition argument for crossing loops. This argument was first applied to loops on the metric graph $\widetilde{\mathbb{Z}}^d$ (see \cite[Section 2.6.3]{cai2024high}), and we adapt it here to Brownian loops in $\mathbb{R}^d$. For any $R>r>0$, we denote by $\mathfrak{L}[r,R]$ the point measure consisting of loops in $\mathcal{L}_{1/2}^{\mathbb{R}^d}$ that intersect both $\bm{\mathcal{C}}(r)$ and $\bm{\mathcal{C}}(R)$. We decompose the loops in $\mathfrak{L}[r,R]$ via the following procedure. For any $\ell \in \mathfrak{L}[r,R]$, we choose an arbitrary time-shift $\ell'$ of $\ell$ satisfying the following properties: 
 \begin{itemize}

 	\item  $\ell'(0)\in \bm{\mathcal{C}}(r)$;

 	\item  There exists $t \in (0, T_{\ell'})$ such that $\ell'(t) \in \bm{\mathcal{C}}(R)$ and $\ell'(s)\in \bm{\mathcal{B}}(R)\setminus \bm{\mathcal{B}}(r)$ for all $s\in (t,T_{\ell'})$.

 \end{itemize}
 We define a sequence $\{\widetilde{\tau}_j\}_{0\le j\le 2\xi}$ of stopping times recursively as follows:
 \begin{enumerate}

 	\item[(i)]  We set $\widetilde{\tau}_0=0$;

 	\item[(ii)]  For each $j\in \mathbb{N}$, given $\widetilde{\tau}_{2j}$, let $\widetilde{\tau}_{2j+1}:=\inf\{ t\in (\widetilde{\tau}_{2j}, T_{\ell'}): \ell'(t)\in \bm{\mathcal{C}}(R) \}$;

    \item[(iii)] Given $\widetilde{\tau}_{2j+1}$, let $\widetilde{\tau}_{2j+2}:=\inf\{ t\in (\widetilde{\tau}_{2j+1}, T_{\ell'}]: \ell'(t)\in \bm{\mathcal{C}}(r)\}$. If $\widetilde{\tau}_{2j+2}=T_{\ell'}$, we stop the construction and set $\xi:=j+1$; otherwise, we proceed to define $\widetilde{\tau}_{2j+3}$ as in (ii).




 	
 \end{enumerate}
 For each $1\le j\le \xi$, we define the $j$-th forward (resp. backward) crossing path, denoted by $\eta^{1}_j$ (resp. $\eta^{-1}_j$), as the restriction of $\ell'$ to the time interval $[\widetilde{\tau}_{2j-2}, \widetilde{\tau}_{2j-1}]$ (resp. $[\widetilde{\tau}_{2j-1}, \widetilde{\tau}_{2j}]$). Note that $\xi$ and $\{\eta^{i}_j\}_{1\le j\le \xi,i\in \{1,-1\}}$ are invariant with respect to the selection of $\ell'$. The construction of $\{\widetilde{\tau}_j\}_{0\le j\le 2\xi}$ ensures that the ending point of each forward crossing path is the starting point of a backward crossing path, and vice versa. We define $\kappa:= \sum_{\ell \in \mathfrak{L}[r,R] } \xi(\ell)$ as the total number of crossings in the loops of $\mathfrak{L}[r,R]$. For simplicity, we enumerate all crossing paths of the loops in $\mathfrak{L}[r,R]$ as $\{\eta^{i}_j\}_{1\le j\le \kappa,i\in \{1,-1\}}$. In addition, we define $\mathcal{F}_{\eta}^{\pm}$ as the $\sigma$-field generated by the starting and ending points of $\{\eta^{1}_j\}_{1\le j\le \kappa}$. This decomposition admits the following two useful properties:  
   \begin{itemize}

     	\item[$\diamond$]  It follows from Lemma \ref{lemma_crossing_loop_mass} that the number of loops in $\mathfrak{L}[r,R]$ decays exponentially. Moreover, for any loop $\ell \in \mathfrak{L}[r,R]$, its crossing number $\xi(\ell)$ also decays exponentially. This is because the energy cost of each backward crossing path (i.e., the probability for the diffusion $X_\cdot^{\mathbb{R}^d}$ starting from $\bm{\mathcal{C}}(R)$ to hit $\bm{\mathcal{C}}(r)$) is bounded away from $1$. Taken together, these two facts yield the exponential decay of $\kappa$ (detailed calculations can be found in \cite[Lemma 2.4]{cai2024high}). To be precise, for any $d\ge 3$, there exists $C>0$ such that for any $R>r>0$ and $l\ge 1$,  
     	\begin{equation}\label{ineq_exp_decay_crossing}
	\mathbb{P}( \kappa\ge l )\le \big( \tfrac{Cr}{R} \big)^{(d-2)l}.
	\end{equation}

     	\item[$\diamond$] Conditioned on $\mathcal{F}_{\eta}^{\pm}$, the crossing paths $\{\eta^{i}_j\}_{1\le j\le \kappa,i\in \{1,-1\}}$ are independent. Moreover, for any $1\le j\le \kappa$, $y\in \bm{\mathcal{C}}(r)$ and $z\in \bm{\mathcal{C}}(R)$, when $\eta^{1}_j$ starts from $y$ and ends at $z$, its distribution is given by 
 \begin{equation}
  \mathbb{P}^{\mathbb{R}^d}_{y}\big( \{X_t^{\mathbb{R}^d}\}_{0\le t\le \tau_{\bm{\mathcal{C}}(R)} } \in \cdot  \mid  \tau_{\bm{\mathcal{C}}(R)}=\tau_{z} <\infty  \big),  
\end{equation}
where $\tau_A$ denotes the time when the diffusion $X_\cdot^{\mathbb{R}^d}$ first hits $A$. Similarly, when $\eta^{-1}_j$ starts from $z$ and ends at $y$, it is distributed as      	     	
     	\begin{equation}
  \mathbb{P}^{\mathbb{R}^d}_{z}\big( \{X_t^{\mathbb{R}^d}\}_{0\le t\le \tau_{\bm{\mathcal{C}}(r)} } \in \cdot  \mid  \tau_{\bm{\mathcal{C}}(r)}=\tau_{y} <\infty  \big).  
\end{equation}
This property is usually referred to as the ``spatial Markov property''; see \cite{werner2016spatial} for a thorough discussion.



     \end{itemize}

  \textbf{Stochastic domination for crossing paths.} As a key observation from \cite[Section 4]{cai2024quasi}, the ranges of crossing paths can be stochastically dominated by those of the loops crossing a closer annulus. To be precise, for any $r>0$ and $\delta\in (0,1)$, we denote $\mathfrak{B}^{1}_{r,\delta}:=\bm{\mathcal{B}}(\delta r)$ and $\mathfrak{B}^{-1}_{r,\delta}:=[\bm{\mathcal{B}}(\delta^{-1} r)]^c$. For any curve $\eta$ in $\mathbb{R}^d$, we denote its range (i.e., the collection of points visited by $\eta$) by $\mathrm{ran}(\eta)$.

   \begin{lemma}\label{lemma_domination}
 	 For any $d\ge 3$, there exist $c>c'>0$ such that the following holds for all $i\in \{1,-1\}$, $r_{-1}>r_{1}>0$, $x_{1}\in \bm{\mathcal{C}}(r_1)$ and $x_{-1}\in \bm{\mathcal{C}}(r_{-1})$. Suppose that the curve $\eta$ is distributed as 
 	 \begin{equation}
   \mathbb{P}^{\mathbb{R}^d}_{x_i}\big( \{X_t^{\mathbb{R}^d}\}_{0\le t\le \tau_{\bm{\mathcal{C}}(r_{-i})}} \in \cdot  \mid  \tau_{\bm{\mathcal{C}}(r_{-i})}=\tau_{x_{-i}} <\infty  \big). 
 	 \end{equation}
 	 Then $\mathrm{ran}(\eta)\cap \mathfrak{B}^{i}_{r_i,c'}$ is stochastically dominated by $(\cup\mathfrak{L}[c^ir_i,10c^ir_i])\cap \mathfrak{B}^{i}_{r_i,c'}$. 
 \end{lemma}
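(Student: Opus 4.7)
The plan is to decompose both the range of the conditional trajectory $\eta$ and the range of the loops in $\mathfrak{L}[c^{i}r_i,10c^{i}r_i]$ restricted to $\mathfrak{B}^i_{r_i,c'}$ into Brownian excursions anchored at the sphere $\bm{\mathcal{C}}(r_i)$, to observe that these excursions share the same conditional law given their endpoints, and to conclude via a Poissonian comparison of endpoint intensities. The case $i=-1$ reduces to $i=1$ by the time-reversal invariance of $\mu^{\mathbb{R}^d}$ and of the conditional bridge (after exchanging the roles of $x_1$ and $x_{-1}$); I therefore focus on $i=1$ and pick constants $c>1>c'>0$ to be specified.

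For $i=1$ and $c'<1$, the set $\mathrm{ran}(\eta)\cap \bm{\mathcal{B}}(c'r_1)$ equals the union of ranges of those inward excursions of $\eta$ from $\bm{\mathcal{C}}(r_1)$ back to itself (staying inside $\bm{\mathcal{B}}(r_1)$) that reach $\bm{\mathcal{B}}(c'r_1)$; call these $\{e_j\}_{j=1}^{K}$. The final outward excursion to $x_{-1}$ remains outside $\bm{\mathcal{B}}(r_1)$ and contributes nothing. By the strong Markov property at each return of $\eta$ to $\bm{\mathcal{C}}(r_1)$, the conditioning $\{\tau_{\bm{\mathcal{C}}(r_{-1})}=\tau_{x_{-1}}\}$ factors through the portion of $\eta$ outside $\bm{\mathcal{B}}(r_1)$: given the endpoints $(y_j,z_j)\in \bm{\mathcal{C}}(r_1)^2$, each $e_j$ is an \emph{unconditional} Brownian motion from $y_j$ conditioned to first exit $\bm{\mathcal{B}}(r_1)$ at $z_j$. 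A parallel analysis applies to the loops: by the spatial-Markov decomposition of crossing loops preceding the statement of the present lemma, each $\ell\in \mathfrak{L}[cr_1,10cr_1]$ contributes, upon a further application of the strong Markov property at its visits to $\bm{\mathcal{C}}(r_1)$ (which lies strictly inside the annulus since $c>1$), a family of inward excursions $\{f_m\}$ satisfying the \emph{same} conditional law as the $e_j$ given their endpoints.

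The claim therefore reduces to comparing the two endpoint intensities, regarded as point measures on $\bm{\mathcal{C}}(r_1)^2$. On the loop side, Lemma \ref{lemma_crossing_loop_mass} provides the total mass of $\mathfrak{L}[cr_1,10cr_1]$ as a positive constant depending only on $d$; the endpoint distribution on $\bm{\mathcal{C}}(r_1)$ produced by a backward crossing is comparable to the uniform surface measure by Harnack's inequality in the annulus $[r_1,cr_1]$, and the probability that the subsequent inward excursion reaches $\bm{\mathcal{B}}(c'r_1)$ is of order $(c')^{d-2}$ by a standard hitting computation. On the $\eta$ side, the exponential decay estimate (\ref{ineq_exp_decay_crossing}) bounds the total number of returns of $\eta$ to $\bm{\mathcal{C}}(r_1)$, and a Harnack-type bound on the Radon--Nikodym derivative of the conditional law of $\eta$ with respect to the unconditional Brownian motion (on events measurable with respect to the trajectory inside $\bm{\mathcal{B}}(cr_1)$, valid once $cr_1\le r_{-1}/10$) yields an upper bound of order $(c')^{d-2}$ whose implicit constant can be made arbitrarily small by enlarging $c$. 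Choosing $c$ large enough makes the loop-side intensity strictly dominate.

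Once the intensity comparison is in hand, the Poissonian structure of $\mathcal{L}_{1/2}^{\mathbb{R}^d}$ allows the endpoint configuration $\{(y_m^f,z_m^f)\}$ to be treated as a Poisson point process on $\bm{\mathcal{C}}(r_1)^2$; a thinning argument couples each $(y_j,z_j)$ to a distinct $(y_m^f,z_m^f)$, and the identity of conditional excursion laws given endpoints then lets each $e_j$ be coupled to the corresponding $f_m$ with identical trajectory. Taking unions gives the desired stochastic domination. I expect the main technical obstacle to be the Harnack-type control of the conditional endpoint intensity of $\eta$: in the regime $r_{-1}/r_1$ large this follows from standard boundary Harnack estimates, while the bounded-ratio regime requires either shrinking $c$ appropriately so that $10cr_1\ge r_{-1}$ fails and the lemma becomes a direct single-crossing comparison, or handling the bridge measure by an explicit Brownian transition density argument.
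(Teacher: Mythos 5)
The paper does not prove this lemma in the text; it defers to Lemma~4.2 of the companion paper on quasi\hbox{-}multiplicativity, remarking only that hitting probabilities of random walks should be replaced by hitting densities of Brownian motion, so there is no in\hbox{-}text argument to compare against line by line. Your general framework --- decomposing both $\eta$ and the crossing loops into inward Brownian excursions anchored on $\bm{\mathcal{C}}(r_1)$, observing that the excursion laws conditioned on their endpoints coincide, and reducing to a comparison of endpoint intensities via a Poisson thinning coupling --- is the right shape of argument and presumably the same skeleton used there.

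However, the intensity comparison contains a genuine error, and it runs in the wrong direction: you take $c>1$ and claim that enlarging $c$ makes the loop side dominate, when in fact $c$ must be taken small. The total mass of $\mathfrak{L}[cr_1,10cr_1]$ is indeed a constant by Lemma~\ref{lemma_crossing_loop_mass}, but a loop crossing the annulus $[cr_1,10cr_1]$ must additionally descend from radius $cr_1$ to radius $r_1$ before it can generate an inward excursion from $\bm{\mathcal{C}}(r_1)$, and this extra descent costs a factor $\asymp c^{2-d}$ which your computation omits. Altogether the loop\hbox{-}side intensity of excursions reaching $\bm{\mathcal{B}}(c'r_1)$ scales like $(c'/c)^{d-2}$, whereas the $\eta$\hbox{-}side intensity scales like $(c')^{d-2}$ \emph{independently of $c$}: the claim that the $\eta$\hbox{-}side implicit constant ``can be made arbitrarily small by enlarging $c$'' cannot be right, since the law of $\eta$ does not involve $c$ at all. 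For the loop side to dominate one therefore needs $c^{2-d}\gtrsim 1$, i.e., $c$ bounded above by a small dimensional constant; for large $c$ the purported dominating loop soup simply sits too far outside $\bm{\mathcal{C}}(r_1)$ and the asserted stochastic domination fails.

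Two smaller points. The phrase ``the total number of returns of $\eta$ to $\bm{\mathcal{C}}(r_1)$'' describes an infinite quantity; what a decay estimate of the type \eqref{ineq_exp_decay_crossing} controls is the number of excursions reaching depth $c'r_1$, which is the finite count actually needed. And the proposed reduction of $i=-1$ to $i=1$ by time\hbox{-}reversal of the bridge does not work as stated: time\hbox{-}reversal swaps the direction of the crossing but leaves the range unchanged and does not exchange the inner ball $\bm{\mathcal{B}}(c'r_1)$ with the outer complement $[\bm{\mathcal{B}}((c')^{-1}r_{-1})]^c$, so the two cases ask genuinely different questions about the same trajectory. They should either be treated in parallel from scratch or related by a spatial inversion argument, which in dimension $d\ge 3$ requires its own care.
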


 The analogue of this domination on $\widetilde{\mathbb{Z}}^d$ was established in \cite[Lemma 4.2]{cai2024quasi}. As in Lemma \ref{lemma_crossing_loop_mass}, the argument in \cite{cai2024quasi} can be directly adapted to $\mathbb{R}^d$ by replacing the hitting probabilities of random walks with the hitting densities of Brownian motion. We therefore omit the details.

 In what follows, we present two corollaries of Lemma \ref{lemma_domination}.

   \begin{corollary}\label{coro_with_crossing_loop}
	For any $d\ge 3$, there exist $C>0$ and $c>c'>0$ such that for any $r_1>0$, $r_{-1}\ge Cr_1$, $A_1\subset \mathfrak{B}^{1}_{r_1,c'}$ and $A_{-1}\subset \mathfrak{B}^{-1}_{r_{-1},c'}$, 
	\begin{equation}\label{new215}
		\mathbb{P}\big(   A_1 \xleftrightarrow{\cup  \mathcal{L}_{1/2}^{\mathbb{R}^d}   }  A_{-1},  \mathfrak{L}[r_1,r_{-1}]\neq 0 \big)  \lesssim \big( \frac{r_1}{r_{-1}}\big)^{d-2}	\prod_{i\in \{1,-1\}}	\mathbb{P}\big(    A_i \xleftrightarrow{\cup  \mathcal{L}_{1/2}^{\mathbb{R}^d}}  \bm{\mathcal{C}}(c^i r_i) \big).  
	\end{equation}
\end{corollary}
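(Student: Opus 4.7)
The plan is to combine three ingredients prepared in Section \ref{section_pre}: the Poisson estimate $\mathbb{P}(\mathfrak{L}[r_1,r_{-1}]\neq 0)\asymp (r_1/r_{-1})^{d-2}$ implied by Lemma \ref{lemma_crossing_loop_mass}, the spatial Markov decomposition of each crossing loop into forward/backward crossing paths, and the stochastic domination of Lemma \ref{lemma_domination}. The overall strategy is the $\mathbb{R}^d$-analogue of the argument used on $\widetilde{\mathbb{Z}}^d$ in \cite{cai2024quasi}.

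First I would reduce to a single crossing of the annulus $[r_1, r_{-1}]$. By (\ref{ineq_exp_decay_crossing}), the total crossing count $\kappa$ satisfies $\mathbb{P}(\kappa \geq l)\lesssim (Cr_1/r_{-1})^{(d-2)l}$, so provided $r_{-1}/r_1$ exceeds the suitable constant in the hypothesis of Corollary \ref{coro_with_crossing_loop}, the contribution from $\kappa \geq 2$ is of order $(r_1/r_{-1})^{2(d-2)}$, strictly smaller than the target once the one-arm probabilities are pulled out. The main case is $\{\kappa=1\}$: a single crossing loop with one forward crossing $\eta^1$ from $\bm{\mathcal{C}}(r_1)$ to $\bm{\mathcal{C}}(r_{-1})$ and one backward crossing $\eta^{-1}$.

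Second, on $\{\kappa=1\}$ the event $\{A_1 \xleftrightarrow{\cup \mathcal{L}_{1/2}^{\mathbb{R}^d}} A_{-1}\}$ implies in particular that $A_1 \subset \mathfrak{B}^1_{r_1,c'}$ is connected through the loop soup to $(\mathrm{ran}(\eta^1)\cup\mathrm{ran}(\eta^{-1})) \cap \mathfrak{B}^1_{r_1,c'}$, and symmetrically near $\bm{\mathcal{C}}(r_{-1})$ for $A_{-1}$. By Lemma \ref{lemma_domination}, $\mathrm{ran}(\eta^i) \cap \mathfrak{B}^i_{r_i,c'}$ is stochastically dominated by $(\cup\mathfrak{L}[c^i r_i, 10 c^i r_i]) \cap \mathfrak{B}^i_{r_i,c'}$. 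Since $\mathfrak{L}[c^i r_i, 10 c^i r_i]$ is a Poisson family living on scales strictly separated from $[r_1,r_{-1}]$, it is independent of $\mathfrak{L}[r_1,r_{-1}]$; after the domination, each of the two connection events to the crossing paths is upgraded, by monotonicity of connectivity in the set of loops, into the one-arm event $\{A_i \xleftrightarrow{\cup\mathcal{L}_{1/2}^{\mathbb{R}^d}} \bm{\mathcal{C}}(c^i r_i)\}$ in the full loop soup. Multiplying these bounds with $\mathbb{P}(\mathfrak{L}[r_1,r_{-1}]\neq 0)\lesssim (r_1/r_{-1})^{d-2}$ yields the claim.

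The main obstacle will be executing this decoupling rigorously, since the crossing loops, the dominating families $\mathfrak{L}[c^i r_i, 10 c^i r_i]$, and the one-arm loops all coexist in the single Poisson process $\mathcal{L}_{1/2}^{\mathbb{R}^d}$. A clean handling requires conditioning on $\mathcal{F}_{\mathbf{yz}}$ and on the existence of the crossing loop via the Palm/Mecke formula for Poisson processes, applying Lemma \ref{lemma_domination} path-by-path, and then appealing to independence of loop families with disjoint scale supports to factor the one-arm probabilities. A subtle point is that the loops in $\mathfrak{L}[c^i r_i, 10 c^i r_i]$ belong both to the dominating family and to the ambient soup, so one must couple the domination pathwise and use monotonicity in inclusion to avoid double-counting.
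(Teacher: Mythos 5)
Your high-level plan matches the paper's --- crossing-path decomposition, conditional independence of the two ends given $\mathcal{F}_{\mathbf{yz}}$, and stochastic domination via Lemma~\ref{lemma_domination} --- but the reduction step and the independence claim each contain a genuine gap.

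The reduction to $\{\kappa=1\}$ does not work as stated. You argue that $\{\kappa\ge 2\}$ can be discarded because $\mathbb{P}(\kappa\ge 2)\lesssim (r_1/r_{-1})^{2(d-2)}$ is ``strictly smaller than the target once the one-arm probabilities are pulled out.'' But the target bound is $(r_1/r_{-1})^{d-2}\prod_i\mathbb{P}\big(A_i\xleftrightarrow{\cup\mathcal{L}_{1/2}^{\mathbb{R}^d}}\bm{\mathcal{C}}(c^ir_i)\big)$, and the product of one-arm probabilities can be arbitrarily small since $A_1$ and $A_{-1}$ are arbitrary subsets of balls in $\mathbb{R}^d$; so $(r_1/r_{-1})^{2(d-2)}$ alone does not dominate it. You would have to extract the one-arm factors on $\{\kappa\ge 2\}$ as well, at which point nothing is gained by separating $\kappa=1$. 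The paper instead handles all values of $\kappa$ uniformly: given $\mathcal{F}_{\mathbf{yz}}$ it union-bounds the attaching of $A_i$ over the $\kappa$ crossing paths plus the non-crossing loops, bounds each term by the one-arm probability (via Lemma~\ref{lemma_domination} for the path terms and monotonicity for the loop term), producing a factor $(\kappa+1)^2$, and then integrates using the exponential tail~(\ref{ineq_exp_decay_crossing}) to get $\mathbb{E}[\mathbbm{1}_{\kappa\ge 1}(\kappa+1)^2]\lesssim(r_1/r_{-1})^{d-2}$. This is the actual source of the $(r_1/r_{-1})^{d-2}$ factor; it does not come from factoring out $\mathbb{P}(\mathfrak{L}[r_1,r_{-1}]\neq 0)$ as an independent event.

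The independence claim is also false as written: the families $\mathfrak{L}[c^ir_i,10c^ir_i]$ and $\mathfrak{L}[r_1,r_{-1}]$ are not independent, because a single loop can cross both annuli and hence belong to both. What makes the argument go through in the paper is that Lemma~\ref{lemma_domination} is applied pathwise to the \emph{conditional} law of each crossing path given its endpoints, producing a fresh dominating loop family that is then glued onto the loops contained entirely in $\mathfrak{B}^i_{r_i,c}$ and absorbed into a single one-arm probability by monotonicity; no joint independence among $\mathfrak{L}$-families is invoked. You gesture toward this in your final paragraph, but the body of the argument relies on the incorrect independence statement, so this is not merely a presentation issue.
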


\begin{proof}
We adapt the proof of \cite[Lemma 5.1]{cai2024quasi}, where the analogue of this lemma for $\widetilde{\mathbb{Z}}^d$ was established.

  For brevity, we denote the event on the left-hand side of (\ref{new215}) by $\mathsf{F}$. We employ the notations of crossing paths with $r=10r_1$ and $R=\frac{r_{-1}}{10}$. For $i\in \{1,-1\}$, let $\mathfrak{U}_i$ denote the union of the ranges of loops in $\mathcal{L}_{1/2}^{\mathbb{R}^d}- \mathfrak{L}[10r_1,\frac{r_{-1}}{10}]$ and crossing paths $\{\eta_j^{i}\}_{1\le j\le \kappa}$. Conditioned on $\mathcal{F}_{\eta}^{\pm}$, when $\mathsf{F}$ occurs, since $A_1$ and $A_{-1}$ lie on opposite sides of the annulus $\mathfrak{B}^{-1}_{r_{-1},c'}\setminus \mathfrak{B}^{1}_{r_1,c'}$, the event $\mathsf{G}_i:=  \big\{ A_i \xleftrightarrow{ \mathfrak{U}_i }  \bm{\mathcal{C}}(c^{i}r_i) \big\} $ must occur for all $i\in \{1,-1\}$. Note that $\mathsf{G}_i$ is measurable with respect to $\mathcal{L}_{1/2}^{\mathbb{R}^d}\cdot \mathbbm{1}_{\mathrm{ran}(\ell)\cap \mathfrak{B}^{i}_{r_{i},c}\neq \emptyset }$ and $\{\eta_j^{i}\}_{1\le j\le \kappa}$. In addition, any loop intersecting both $\mathfrak{B}^{1}_{r_{1},c}$ and $\mathfrak{B}^{-1}_{r_{-1},c}$ must be contained in $\mathfrak{L}[10r_1,\frac{r_{-1}}{10}]$, and hence has been decomposed into crossing paths. Thus, given $\mathcal{F}_{\eta}^{\pm}$, $\mathsf{G}_1$ and $\mathsf{G}_{-1}$ are conditionally independent. As a result,  
	 	  \begin{equation}\label{new216}
	  	\begin{split}
	  		\mathbb{P}\big(\mathsf{F} \big)\le & \mathbb{E}\Big[ \mathbbm{1}_{\kappa\ge 1} \cdot \prod\nolimits_{i\in \{1,-1\}} \mathbb{P}\big( \mathsf{G}_i \mid \mathcal{F}_{\eta}^{\pm} \big) \Big] \\
	  		\le &\mathbb{E}\Big[\mathbbm{1}_{\kappa\ge 1} \cdot  \prod\nolimits_{i\in \{1,-1\}} \Big( \sum\nolimits_{0\le j\le \kappa} \mathbb{P}\big(  \mathsf{G}_{i,j} \mid \mathcal{F}_{\eta}^{\pm} \big) \Big) \Big], 
	  	\end{split}
	  \end{equation}
	 	where $\mathsf{G}_{i,0}$ is the event that $A_i$ and $\bm{\mathcal{C}}(c^{i}r_i)$ are connected by $\cup(\mathcal{L}_{1/2}^{\mathbb{R}^d}- \mathfrak{L}[10r_1,\frac{r_{-1}}{10}])$, and for each $1\le j\le \kappa$, $\mathsf{G}_{i,j}$ is the event that $A_i$ is connected to $\mathrm{ran}(\eta^{i}_j)\cap \mathfrak{B}^{i}_{r_{i},c}$ by loops contained in $\mathfrak{B}^{i}_{r_{i},c}$ (we denote the point measure consisting of these loops by $\mathfrak{L}_i$). Since $\mathcal{L}_{1/2}^{\mathbb{R}^d}- \mathfrak{L}[10r_1,\frac{r_{-1}}{10}]\le \mathcal{L}_{1/2}^{\mathbb{R}^d}$, one has  
	 \begin{equation}\label{new217}
		\mathbb{P}\big(  \mathsf{G}_{i,0} \mid \mathcal{F}_{\eta}^{\pm}  \big)   \le \mathbb{P}\big(    A_i \xleftrightarrow{\cup  \mathcal{L}_{1/2}^{\mathbb{R}^d}}  \bm{\mathcal{C}}(c^i r_i) \big). 
	\end{equation}
	Meanwhile, for $1\le j\le \kappa$, it follows from Lemma \ref{lemma_domination} that 
\begin{equation}\label{new218}
	\begin{split}
			\mathbb{P}\big(  \mathsf{G}_{i,j} \mid \mathcal{F}_{\eta}^{\pm} \big)   \le  & \mathbb{P}\big(    A_i \xleftrightarrow{\cup ( \mathfrak{L}_i+\mathfrak{L}[c^{i/2}r_i,10c^{i/2}r_i])}  \bm{\mathcal{C}}(c^i r_i) \big) \\
			\le & \mathbb{P}\big(    A_i \xleftrightarrow{\cup  \mathcal{L}_{1/2}^{\mathbb{R}^d}}  \bm{\mathcal{C}}(c^i r_i) \big).
	\end{split}
\end{equation}
	Plugging (\ref{new217}) and (\ref{new218}) into (\ref{new216}), we obtain 
\begin{equation}\label{new219}
	\mathbb{P}\big(\mathsf{F} \big)\le  \mathbb{E}\big[ \mathbbm{1}_{\kappa\ge 1} \cdot(\kappa+1)^2 \big]  \cdot\prod\nolimits_{i\in \{1,-1\}}\mathbb{P}\big(    A_i \xleftrightarrow{\cup  \mathcal{L}_{1/2}^{\mathbb{R}^d}}  \bm{\mathcal{C}}(c^i r_i) \big).
\end{equation}	
	 
According to (\ref{ineq_exp_decay_crossing}), $\kappa$ decays exponentially with rate $O\big((r_1 / r_{-1})^{d-2}\big)$, which implies that $\mathbb{E}\big[ \mathbbm{1}_{\kappa\ge 1} \cdot(\kappa+1)^2 \big]\lesssim (r_1 / r_{-1})^{d-2}$. Combined with (\ref{new219}), it yields the desired bound (\ref{new215}). 
\end{proof}

    The second corollary is the quasi-submultiplicativity of connecting probabilities: 
  \begin{corollary}\label{coro_quasi_brownian}
	For any $d\ge 3$, there exist $c>c'>0$ such that for any $r>0$, $A_1 \subset \mathfrak{B}^{1}_{r,c'}$ and $A_{-1} \subset \mathfrak{B}^{-1}_{r,c'}$,
	\begin{equation}
		\mathbb{P}\big(    A_1 \xleftrightarrow{\cup  \mathcal{L}_{1/2}^{\mathbb{R}^d}   }  A_{-1} \big) \lesssim  \prod\nolimits_{i\in \{1,-1\}}	\mathbb{P}\big(    A_i \xleftrightarrow{\cup  \mathcal{L}_{1/2}^{\mathbb{R}^d}}  \bm{\mathcal{C}}(c^i r) \big).
	\end{equation}
\end{corollary}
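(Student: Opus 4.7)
The argument parallels that of Corollary~\ref{coro_with_crossing_loop}, with one additional step to handle connections that do not use a full-crossing loop of the annulus. Fix a constant $c_0\in(c',1)$ (with the $c'$ from the statement chosen small enough), set $r_1:=c_0r$ and $r_{-1}:=c_0^{-1}r$, write $\mathsf{F}:=\{A_1\xleftrightarrow{\cup\mathcal{L}_{1/2}^{\mathbb{R}^d}}A_{-1}\}$, and (as in the proof of Corollary~\ref{coro_with_crossing_loop}) let $\kappa$ denote the total number of crossings of the annulus $\bm{\mathcal{B}}(r_{-1}/10)\setminus\bm{\mathcal{B}}(10r_1)$ by loops of $\mathfrak{L}[10r_1,r_{-1}/10]$. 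The plan is to split $\mathbb{P}(\mathsf{F})=\mathbb{P}(\mathsf{F},\kappa\geq 1)+\mathbb{P}(\mathsf{F},\kappa=0)$ and bound the two pieces separately.

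For the first piece I reuse the decomposition from the proof of Corollary~\ref{coro_with_crossing_loop}: decomposing the loops of $\mathfrak{L}[10r_1,r_{-1}/10]$ into forward and backward crossing paths, conditioning on the $\sigma$-field $\mathcal{F}_{\mathbf{yz}}$ of their endpoints, and applying Lemma~\ref{lemma_domination} on each side yield
\[
\mathbb{P}(\mathsf{F},\kappa\geq 1)\lesssim\mathbb{E}\big[(\kappa+1)^2\mathbbm{1}_{\kappa\geq 1}\big]\prod_{i\in\{1,-1\}}\mathbb{P}\big(A_i\xleftrightarrow{\cup\mathcal{L}_{1/2}^{\mathbb{R}^d}}\bm{\mathcal{C}}(\tilde c^ir_i)\big),
\]
where $\tilde c$ denotes the constant produced by Lemma~\ref{lemma_domination}. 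Since $r_{-1}/r_1=c_0^{-2}$ is a fixed constant, the exponential decay estimate~(\ref{ineq_exp_decay_crossing}) bounds the prefactor by $O(1)$, so rearranging gives the desired bound on $\{\kappa\geq 1\}$ with the output constant $c:=\tilde c\cdot c_0$.

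For the second piece I use that on $\{\kappa=0\}$ every loop of $\mathcal{L}_{1/2}^{\mathbb{R}^d}$ lies entirely in $\bm{\mathcal{B}}(r_{-1}/10)$ or in $[\bm{\mathcal{B}}(10r_1)]^c$. Partitioning the loop soup (by the independence of Poisson point processes on disjoint subsets of loop-space) into disjoint sub-families according to which of the two boundary spheres their loops intersect, and conditioning on the sub-family of loops entirely inside the annulus, I obtain the conditional independence of the remaining ``inner'' and ``outer'' configurations. Lemma~\ref{lemma_domination} then dominates each half-connection from $A_i$ into the annulus by $\mathbb{P}(A_i\xleftrightarrow{\cup\mathcal{L}_{1/2}^{\mathbb{R}^d}}\bm{\mathcal{C}}(c^ir))$, and a union bound over an $O(1)$-cover of the annulus by scale-$r$ boxes controls the ``handshake'' of the two clusters meeting inside the annulus at a constant cost. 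The main obstacle is precisely this handshake argument: although the two sides are conditionally independent, the requirement that their clusters meet inside the annulus is a joint condition which must be shown not to introduce any divergent factor in $r$. I expect to resolve it by combining the Poisson independence of the three sub-families (inner, annulus, outer) with Lemma~\ref{lemma_crossing_loop_mass}, which bounds the expected number of annulus-loops at fixed modulus by $O(1)$, together with an exponential-decay estimate for long chains of annulus loops analogous to~(\ref{ineq_exp_decay_crossing}).
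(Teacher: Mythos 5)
Your $\kappa\geq 1$ piece is fine and essentially repeats the mechanism of Corollary~\ref{coro_with_crossing_loop}, but the $\kappa=0$ piece contains a genuine gap, which you yourself flag. The ``handshake'' problem is not a technical nuisance that a union bound plus an exponential-decay estimate will dispatch: on $\{\kappa=0\}$ the chain of loops realizing $A_1\leftrightarrow A_{-1}$ can alternate arbitrarily between loops that touch $\bm{\mathcal{B}}(10r_1)$, loops that touch $[\bm{\mathcal{B}}(r_{-1}/10)]^c$, and loops entirely inside the annulus; after you condition on the third sub-family, the event you want to factorize is still the existence of such an alternating chain, and this is a genuinely joint event in the inner and outer configurations \emph{through} the conditioned loops. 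Nothing in the paper establishes the chain-length estimate you would need at a single intermediate scale, so as written the proposal does not close.

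The paper's proof avoids the handshake altogether by using a different decomposition. Instead of conditioning on the crossing count of one annulus centred at scale $r$, it conditions on the absence of crossing loops in two narrow annuli, one near each endpoint: $\mathfrak{L}_1=\mathfrak{L}[c^{2/3}r,c^{1/3}r]$ and $\mathfrak{L}_{-1}=\mathfrak{L}[c^{-1/3}r,c^{-2/3}r]$. On $\{\mathfrak{L}_1=\mathfrak{L}_{-1}=0\}$, the geometric observation is that any cluster realizing $A_1\leftrightarrow A_{-1}$ must contain a sub-cluster of loops lying entirely in $\bm{\mathcal{B}}(r)$ and joining $A_1$ to $\bm{\mathcal{C}}(cr)$, and, separately, a sub-cluster of loops lying entirely in $[\bm{\mathcal{B}}(r)]^c$ joining $A_{-1}$ to $\bm{\mathcal{C}}(c^{-1}r)$. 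These two sub-events are measurable with respect to loops confined to the disjoint regions $\bm{\mathcal{B}}(r)$ and $[\bm{\mathcal{B}}(r)]^c$, so independence is automatic from the Poissonian structure and no ``meeting'' condition in the annulus ever needs to be analyzed. The remaining cases $\{\mathfrak{L}_1\neq 0\}$ and $\{\mathfrak{L}_{-1}\neq 0\}$ are handled directly by Corollary~\ref{coro_with_crossing_loop}, exactly as in your $\kappa\geq 1$ piece. If you want to salvage your plan, the cleanest fix is to replace your single-scale event $\{\kappa=0\}$ by the paper's two-scale event $\{\mathfrak{L}_1=\mathfrak{L}_{-1}=0\}$; the rest of your argument then collapses to the paper's.
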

   \begin{proof}
 The counterpart of this corollary for $\widetilde{\mathbb{Z}}^d$ is a special case of \cite[Lemma 5.2]{cai2024quasi}. Here we extend the proof therein to $\mathbb{R}^d$.

    We denote $\mathfrak{L}_1:=\mathfrak{L}[c^{\frac{2}{3}}r,c^{\frac{1}{3}}r]$ and $\mathfrak{L}_{-1}:=\mathfrak{L}[c^{-\frac{1}{3}}r,c^{-\frac{2}{3}}r]$. For each $i\in \{1,-1\}$, when $A_1 \xleftrightarrow{\cup  \mathcal{L}_{1/2}^{\mathbb{R}^d}  }  A_{-1}$ and $\mathfrak{L}_i= 0$ both occur, every cluster connecting $A_1$ and $A_{-1}$ must include a sub-cluster consisting of loops inside $\mathfrak{B}^{i}_{r,1}$ and intersecting both $A_i$ and $\bm{\mathcal{C}}(c^i r)$. In particular, $A_i$ is connected to $\bm{\mathcal{C}}(c^i r)$ by loops in $\mathcal{L}_{1/2}^{\mathbb{R}^d}\cdot \mathbbm{1}_{\mathrm{ran}(\ell)\subset \mathfrak{B}^{i}_{r,1}}$. Since $\mathfrak{B}^{1}_{r,1}$ and $\mathfrak{B}^{-1}_{r,1}$ are disjoint, these two connecting events (for $i=1$ and $i=-1$) are independent. Consequently, we have
     \begin{equation}\label{new221}
   	\begin{split}
   			\mathbb{P}\big(    A_1 \xleftrightarrow{\cup  \mathcal{L}_{1/2}^{\mathbb{R}^d}   }  A_{-1}, \mathfrak{L}_1=\mathfrak{L}_{-1}= 0 \big) \lesssim  \prod\nolimits_{i\in \{1,-1\}}	\mathbb{P}\big(    A_i \xleftrightarrow{\cup  \mathcal{L}_{1/2}^{\mathbb{R}^d}}  \bm{\mathcal{C}}(c^i r) \big).
   	\end{split}
   \end{equation} 
        Meanwhile, it follows from Corollary \ref{coro_with_crossing_loop} that for any $i\in \{1,-1\}$,  
 \begin{equation}\label{new222}
 \begin{split}
  	\mathbb{P}\big(    A_1 \xleftrightarrow{\cup  \mathcal{L}_{1/2}^{\mathbb{R}^d}   }  A_{-1}, \mathfrak{L}_i\neq  0 \big)  
 \lesssim     \prod\nolimits_{i\in \{1,-1\}}	\mathbb{P}\big(    A_i \xleftrightarrow{\cup  \mathcal{L}_{1/2}^{\mathbb{R}^d}}  \bm{\mathcal{C}}(c^i r_i) \big).   	
 \end{split}
 \end{equation}
   Combining (\ref{new221}) and (\ref{new222}), we complete the proof. 
\end{proof}

 \section{Crossing probability after the removal of small loops} \label{section_proof_crossing_probability}

This section is devoted to the proof of Theorem \ref{theorem_const_removal_edge_loop}. Let $C_\dagger>0$ be a sufficiently large constant that will be determined later. Without loss of generality, we assume $\widehat{N}:= N^{1\vee \frac{2}{d-4}}\ge C_\dagger^{10}n$ (otherwise, (\ref{goodineq_128}) and (\ref{goodineq_129}) follow directly from the trivial bound). For simplicity, we abbreviate $\mathcal{L}_{1/2}^{\widetilde{\mathbb{Z}}^d}$ as $\widetilde{\mathcal{L}}$, and abbreviate ``$\xleftrightarrow{\cup \widetilde{\mathcal{L}}}$'' as ``$\xleftrightarrow{}$''. Recall that $\mathfrak{L}$ is the point measure consisting of the loops in $\mathcal{L}_{1/2}^{\widetilde{\mathbb{Z}}^d}$ that are contained in a single interval $I_e$ for some $e\in \mathbb{L}^d$ and intersect both of its trisection points. For any $A,A'\subset \widetilde{\mathbb{Z}}^d$, we define $\mathsf{F}_{A,A'}$ as the event that $A$ and $A'$ are connected by $\cup \widetilde{\mathcal{L}}$ without having any pivotal loop in $\mathfrak{L}$. In particular, we have
\begin{equation}
	\big\{B(n) \xleftrightarrow{\cup (\widetilde{\mathcal{L}}-\mathfrak{L})} \partial B(N)\big\} \subset \mathsf{F}_{B(n), \partial B(N)}. 
\end{equation}
Combining this inclusion with (\ref{crossing_low}) and (\ref{crossing_high}), we obtain 
\begin{equation}
	\begin{split}
		&\mathbb{P}\big( B(n) \xleftrightarrow{\cup (\widetilde{\mathcal{L}}-\mathfrak{L})} \partial B(N) \big) \\
		 \lesssim & \big( n/\widehat{N} \big)^{(\frac{d}{2}-1)\vee  (d-4)} \cdot  \mathbb{P}\big(  \mathsf{F}_{B(n), \partial B(N)} \mid B(n) \xleftrightarrow{} \partial B(N) \big). 
	\end{split}
\end{equation}
Therefore, it is sufficient to prove that for some constant $c>0$,   
\begin{equation}\label{newfixadd_33}
	 \mathbb{P}\big(  \mathsf{F}_{B(n), \partial B(N)} \mid B(n) \xleftrightarrow{} \partial B(N) \big)\lesssim       \big( n  / \widehat{N} \big)^{c}. 
\end{equation} 
Before delving into the details, we first outline the heuristic leading to this estimate. It was shown in \cite[Lemma 5.3]{inpreparation_pivotal} that conditioned on $\{B(n) \xleftrightarrow{} \partial B(N)\}$, for any annulus
$B(M)\setminus B(m)$ satisfying $\widehat{N} \gg M\ge C m \gg n$, with uniformly positive probability each cluster connecting $B(n)$ and $\partial B(N)$ contains at least one pivotal loop in $\mathfrak{L}$. Pivotal loops contained in different annuli are in fact correlated, with correlations induced by loops crossing multiple annuli. Referring to \cite[Lemma 5.2]{inpreparation_pivotal}, the probability of such crossings decays sufficiently fast, which in turn suggests that pivotal loops in distant annuli are asymptotically independent. Note that there are of order $\mathrm{log}(\widehat{N}/n)$ such annuli, and that on the event $\mathsf{F}_{B(n), \partial B(N)}$, none of them contains a pivotal loop in $\mathfrak{L}$. Consequently, the conditional probability of $\mathsf{F}_{B(n),\partial B(N)}$ decays at least polynomially in $\widehat{N}/n$. In the remainder of this section, we present a detailed proof following this heuristic.

  For any $R\ge 1$, we denote by $\widetilde{B}(R):=\cup_{e\in \mathbb{L}^d:I_e \cap (-R,R)^d\neq \emptyset}I_e$ the box on $\widetilde{\mathbb{Z}}^d$ of radius $R$ centered at $\bm{0}$. For each $i\in \mathbb{N}$, let $r_i:= C_\dagger^i n$. Next, we recall the definition of the partial cluster $\mathfrak{C}_i$ from \cite[Section 3.2]{cai2024high}. Informally, $\mathfrak{C}_i$ is the loop cluster obtained by restricting the loop intersections to $\widetilde{B}(r_{2i})$. Precisely, for $i\in \mathbb{N}^+$, the partial cluster $\mathfrak{C}_i$ can be defined via the following inductive procedure: 
 \begin{itemize}

 	\item  Step $0$: we define $\mathcal{C}_0:=\widetilde{B}(n)$;

 	\item  Step $j$ ($j\ge 1$): Given $\mathcal{C}_{j-1}$, the cluster $\mathcal{C}_{j}$ is defined as the union of $\mathcal{C}_{j-1}$ and the ranges of all loops in $\widetilde{\mathcal{L}}$ intersecting $\mathcal{C}_{j-1}\cap \widetilde{B}(r_{2i})$. If no such loop exists (i.e., $\mathcal{C}_{j}=\mathcal{C}_{j-1}$), we stop the procedure and define $\mathfrak{C}_i:=\mathcal{C}_{j}$; otherwise, we proceed to Step $j+1$.

 \end{itemize}
 As noted in \cite[Section 5.2]{inpreparation_pivotal}, the cluster $\mathfrak{C}_i$ has  the following two properties:
\begin{enumerate}

	\item[(i)]  For any possible configuration $D$ of $\mathfrak{C}_i$, conditioned on $\{\mathfrak{C}_i=D \}$, the point measure consisting of all loops that are not included in $\mathfrak{C}_i$ shares the same distribution as $\widetilde{\mathcal{L}}\cdot \mathbbm{1}_{\mathrm{ran}(\ell)\cap D\cap \widetilde{B}(r_{2i})=\emptyset}$.


	\item[(ii)]  Arbitrarily take $i'>i\ge 1$ such that $n_{2i'}<N$. Given $\mathfrak{C}_i$, on the event $\{B(n)\xleftrightarrow{} \partial B(N)\}$, any loop in $\mathfrak{L}$ pivotal for $\{\mathfrak{C}_i \xleftrightarrow{} \partial B(n_{2i'}) \}$ is also pivotal for $\{B(n)\xleftrightarrow{} \partial B(N)\}$. In addition, all such loops are measurable with respect to $\mathcal{F}_{\mathfrak{C}_j}$ for all $j\ge i'$. Here $\mathcal{F}_{\mathcal{C}}$ denotes the $\sigma$-field generated by the cluster $\mathcal{C}$ and all loops included in $\mathcal{C}$.


\end{enumerate}
For convenience, we set $\mathfrak{C}_0:=\widetilde{B}(n)$. In particular, $\mathcal{F}_{\mathfrak{C}_0}$ is trivial.

We define $i_\star= i_\star(n,N):= \min \{ i\ge 1: r_{2i+3}\ge \widehat{N}    \}$. Note that $i_\star \asymp \mathrm{ln} ( \widehat{N}/n )$. It follows from Property (ii) that on $\mathsf{F}_{B(n), \partial B(N)}$, for any $1\le i<i' \le i_\star$, no loop in $\mathfrak{L}$ is pivotal for $\{\mathfrak{C}_i \xleftrightarrow{} \partial B(n_{2i'}) \}$ (we denote this event by $\mathsf{G}_{i,i'}$). Therefore, 
 \begin{equation}\label{new3.3}
 	\mathbb{P}\big( \mathsf{F}_{B(n), \partial B(N)} \big)\le  \mathbb{P}\big( B(n)\xleftrightarrow{} \partial B(N), \cap_{1\le i<i' \le i_\star} \mathsf{G}_{i,i'}\big).  
 \end{equation}


 

For each $i\in \mathbb{N}^+$, we define $i_+:=\min \{j\ge i+1: \mathfrak{C}_i\cap \partial B(r_{2j-1})= \emptyset \}$. We also set $0_+:=1$. For any $j\in [ i_++2,i_{\star}]$, if $(i_+)_+\ge j$ (i.e., $\mathfrak{C}_{i_+}$ intersects $\partial B(r_{2j-3})$), then the cluster $\mathfrak{C}_{i_+}$ must contain a loop that intersects both $B(r_{2i_+})$ and $\partial B(r_{2j-3})$ (we denote the collection of such loops by $\widetilde{\mathfrak{L}}[r_{2i_+}, r_{2j-3}]$). As a result,
\begin{equation}\label{24}
	\begin{split}
		& \mathbb{P}\big( B(n)\xleftrightarrow{} \partial B(N), (i_+)_+\ge j  \mid \mathcal{F}_{\mathfrak{C}_i} \big) \\
		\overset{ }{\le} &    \mathbb{P}\big( \mathfrak{C}_i \xleftrightarrow{} \partial B(N), \widetilde{\mathfrak{L}}[r_{2i_+}, r_{2j-3}] \neq 0  \mid \mathcal{F}_{\mathfrak{C}_i} \big)\\
		\lesssim  &   \sqrt{\tfrac{r_{2i_+}}{r_{2j-3}}} \cdot \mathbb{P}\big(B(n)\xleftrightarrow{} \partial B(N)   \mid \mathcal{F}_{\mathfrak{C}_i} \big),
	\end{split}
\end{equation} 
where the last inequality follows from Property (i) and \cite[Lemma 5.2]{inpreparation_pivotal}.


We define an increasing sequence $\{i_k\}_{0\le k \le k_{\star}}$ recursively as follows: (i) set $i_0:=0$ and $i_1:=1$; (ii) for $k\ge 1$, if $i_k\ge i_\star$, we stop the construction and set $k_{\star}:=k$; otherwise, we set $i_{k+1}:=(i_k)_+$ and proceed to the next step (i.e., check whether $i_{k+1} \ge i_\star$ and continue accordingly). The next lemma shows that with high probability, this construction runs for sufficiently many steps before the sequence reaches $i_\star$. We denote $K:=\mathrm{ln}(\widehat{N}/n)$, and let $\mathsf{H}_{j}:=\{k_{\star} \le j\}$ for $j\ge 1$.

\begin{lemma}
	There exist constants $c_{\spadesuit},c_{\clubsuit}>0$ such that 
	\begin{equation}\label{new35}
		\mathbb{P}\big(\mathsf{H}_{c_{\spadesuit}K}  \mid B(n)\xleftrightarrow{} \partial B(N)  \big) \lesssim \big( n / \widehat{N} \big)^{c_{\clubsuit}}.
\end{equation}
\end{lemma}
\begin{proof}
	We divide the event $\mathsf{H}_{c_{\spadesuit}K}$ into two sub-events 
\begin{equation}
	\mathsf{H}_{c_{\spadesuit}K}^{(1)}:= \mathsf{H}_{c_{\spadesuit}K}\cap \big\{i_{k_\star-1}\le \tfrac{1}{2}i_{\star}  \big\}\ \ \text{and}\ \ \mathsf{H}_{c_{\spadesuit}K}^{(2)}:= \mathsf{H}_{c_{\spadesuit}K}\cap \big\{i_{k_\star-1}> \tfrac{1}{2}i_{\star} \big\}, 
\end{equation}
and estimate their conditional probabilities separately.


 For $\mathsf{H}_{c_{\spadesuit}K}^{(1)}$, note that for any $0\le i<i'\le \frac{1}{2}i_{\star}$, the event $\{i_+= i' \}$ is measurable with respect to $\mathcal{F}_{\mathfrak{C}_{i}}$. Therefore, we have
 \begin{equation}
 	\begin{split}
 		&	\mathbb{P}\big( B(n)\xleftrightarrow{} \partial B(N) ,\mathsf{H}_{c_{\spadesuit}K}^{(1)} , i_{k_\star-2} =i,  i_{k_\star-1} =i' \big)  \\
 		\le  & \mathbb{E}\big[ \mathbbm{1}_{i_+= i' } \cdot \mathbb{P}\big( B(n)\xleftrightarrow{} \partial B(N) , (i_+)_+\ge  i_{\star} \mid \mathcal{F}_{\mathfrak{C}_{i}} \big)   \big] \\
 		\overset{(\ref{24})}{\lesssim } &  \sqrt{\tfrac{r_{2i'}}{r_{2i_{\star}-3}}} \cdot \mathbb{P}\big( B(n)\xleftrightarrow{} \partial B(N) \big) \lesssim e^{c(i'-i_{\star})}\cdot \mathbb{P}\big( B(n)\xleftrightarrow{} \partial B(N) \big). 
 	\end{split}
 \end{equation}
Summing over all $0\le i<i'\le \frac{1}{2}i_{\star}$, this yields 
\begin{equation}\label{new38}
	\mathbb{P}\big(\mathsf{H}_{c_{\spadesuit}K}^{(1)}  \mid B(n)\xleftrightarrow{} \partial B(N)  \big) \lesssim e^{-c'i_{\star}}.
	\end{equation}

 For  $\mathsf{H}_{c_{\spadesuit}K}^{(2)}$, it follows from (\ref{24}) that for each $1\le k\le k_\star-2$, given $\{i_j\}_{0\le j\le k}$, the increment $i_{k+1}-i_k$ decays exponentially with rate $O(C_\dagger^{-1})$. Therefore, on the event $\mathsf{H}_{c_{\spadesuit}K}$, $i_{k_\star-1}$ is stochastically dominated by a negative binomial random variable $\mathbf{Y}$, which counts the number of trials required to obtain $c_{\spadesuit}K$ successes, with success probability $O(C_\dagger^{-1})$ in each trial. Thus, by taking a sufficiently large $C_\dagger$, we have
 \begin{equation}\label{new39}
 		\mathbb{P}\big(\mathsf{H}_{c_{\spadesuit}K}^{(2)}  \mid B(n)\xleftrightarrow{} \partial B(N)  \big) \le \mathbb{P}(\mathbf{Y}>\tfrac{1}{2}i_{\star} ) \lesssim e^{-c''i_{\star}}.
 \end{equation}
 Combining (\ref{new38}), (\ref{new39}) and $i_\star \asymp \mathrm{ln} ( \widehat{N}/n )$, we obtain this lemma.
\end{proof}

 In what follows, we aim to show that for some $c_{\triangle}>0$, 
  \begin{equation}\label{new310}
  	\mathbb{P}\big(  (\cap_{1\le i<i'\le i_\star} \mathsf{G}_{i,i'}) \cap  \mathsf{H}_{c_{\spadesuit}K}^c \mid B(n)\xleftrightarrow{} \partial B(N)  \big) \lesssim \big( n / \widehat{N} \big)^{c_{\triangle}}. 
  \end{equation}
To see this, note that 
 \begin{equation}\label{new311}
 	  (\cap_{1\le i<i'\le i_\star} \mathsf{G}_{i,i'}) \cap  \mathsf{H}_{c_{\spadesuit}K}^c   \subset    (\cap_{1\le k\le \lfloor  c_{\spadesuit}K \rfloor -2 } \mathsf{G}_{i_k, i_{k+1}})\cap  \{ i_{ \lfloor  c_{\spadesuit}K \rfloor-1}< i_{\star}     \}. 
 \end{equation}
 Moreover, for each $m\ge 3$, since $\cap_{1\le k\le m-2} \mathsf{G}_{i_k,i_{k+1}}$ and $i_{m}$ are both measurable with respect to $\mathcal{F}_{\mathfrak{C}_{i_{m-1}}}$ (by Property (ii)), one has that for some constant $c_{\diamondsuit}\in (0,1)$,  
  \begin{equation}\label{new312}
 \begin{split}
 	&  	 \mathbb{P}\big(B(n)\xleftrightarrow{} \partial B(N), \cap_{1\le k\le m-1} \mathsf{G}_{i_k,i_{k+1}}, i_{m} < i_{\star}   \big) \\
 	 	 = & \mathbb{E}\big[ \mathbbm{1}_{\cap_{1\le k\le m-2} \mathsf{G}_{i_k,i_{k+1}}\cap \{i_m< i_{\star} \}} \cdot \mathbb{P}\big(B(n)\xleftrightarrow{} \partial B(N),  \mathsf{G}_{i_{m-1},i_m} \mid \mathcal{F}_{\mathfrak{C}_{i_{m-1}}} \big) \big]  \\
 	 	 \le & (1-c_{\diamondsuit})\cdot  \mathbb{P}\big(B(n)\xleftrightarrow{} \partial B(N),   \cap_{1\le k\le m-2 } \mathsf{G}_{i_k,i_{k+1}}, i_{m-1} < i_{\star}  \big),  
 \end{split}
 \end{equation}
 where in the last line we used $\{i_m <  i_{\star} \}\subset \{i_{m-1} < i_{\star} \}$ and \cite[Lemma 5.3]{inpreparation_pivotal}. By applying (\ref{new312}) repeatedly, we get 
 \begin{equation}
 	\mathbb{P}\big( \cap_{1\le k\le \lfloor  c_{\spadesuit}K \rfloor -2 } \mathsf{G}_{i_k, i_{k+1}}, i_{ \lfloor  c_{\spadesuit}K \rfloor-1}< i_{\star}   \mid B(n)\xleftrightarrow{} \partial B(N)  \big)\le (1-c_{\diamondsuit})^{\lfloor  c_{\spadesuit}K \rfloor-2}. 
 \end{equation}  
Combined with (\ref{new311}), it implies (\ref{new310}).

Putting (\ref{new3.3}), (\ref{new35}) and (\ref{new310}) together, we establish the desired bound (\ref{newfixadd_33}), which completes the proof of Theorem \ref{theorem_const_removal_edge_loop}.  \qed



 \section{One-arm exponent and dimension of Brownian loop clusters}\label{section_existence_one_arm}
 
 In this section, we present the proof of Theorem \ref{theorem_main}, which includes the following three parts: 
\begin{itemize}

	\item  Section \ref{subsection_exist_zeta}: We adapt the proof of Fekete's subadditive lemma to establish the existence of the one-arm exponent $\zeta$.

	\item Section \ref{section_zeta_larger_than}: By comparing the one-arm probability of $\mathcal{L}_{1/2}^{\mathbb{R}^3}$ with that of $\mathcal{L}_{1/2}^{\widetilde{\mathbb{Z}}^3}$ (via the coupling in Lemma \ref{lemma_coupling}), we show that $\zeta>\frac{1}{2}$.

	\item  Section \ref{section_cluster_dimension}: We use the first moment method to prove that almost surely the upper box-counting dimension of every loop cluster of $\mathcal{L}_{1/2}^{\mathbb{R}^3}$ is bounded from above by $3-\zeta$.

\end{itemize}

 \subsection{Proof of the existence of $\zeta$}\label{subsection_exist_zeta}

 We denote $q(\epsilon):= 	\mathbb{P}\big( \bm{\mathcal{C}}(\epsilon) \xleftrightarrow{\cup \mathcal{L}_{1/2}^{\mathbb{R}^3}} \bm{\mathcal{C}}(1) \big)$. The scaling invariance of $\mathcal{L}_{1/2}^{\mathbb{R}^3}$ implies that 
\begin{equation}\label{new41}
	\mathbb{P}\big( \bm{\mathcal{C}}(\epsilon r) \xleftrightarrow{\cup \mathcal{L}_{1/2}^{\mathbb{R}^3}} \bm{\mathcal{C}}(r) \big) = q(\epsilon), \  \forall r>0. 
\end{equation} 
By Corollary \ref{coro_quasi_brownian}, there exist $C_\dagger,C_\ddagger>0$ such that for any $\epsilon_1,\epsilon_2\in (0,C_\ddagger^{-1})$, 
\begin{equation}\label{new43}
	\begin{split}
			q(\epsilon_1\epsilon_2) \le& C_\dagger\cdot   \mathbb{P}\big( \bm{\mathcal{C}}(\epsilon_1\epsilon_2) \xleftrightarrow{\cup \mathcal{L}_{1/2}^{\mathbb{R}^3}} \bm{\mathcal{C}}(C_\ddagger^{-1}\epsilon_2) \big)\cdot \mathbb{P}\big( \bm{\mathcal{C}}(C_\ddagger\epsilon_2) \xleftrightarrow{\cup \mathcal{L}_{1/2}^{\mathbb{R}^3}} \bm{\mathcal{C}}(1) \big)\\
			\overset{}{=} &  C_\dagger\cdot q(C_\ddagger\epsilon_1) \cdot q(C_\ddagger\epsilon_2).
	\end{split}
\end{equation}
For $k\in \mathbb{N}$, let $u_{k}:=\mathrm{ln}\big(q(C_\ddagger^{-k})\big)$. Note that $\{ u_{k} \}_{k\in \mathbb{N}}$ is a decreasing sequence since $q(\epsilon)$ is increasing in $\epsilon$. It follows from (\ref{new43}) that for any $k_1,k_2\ge 1$,  
\begin{equation}\label{new44}
	u_{k_1+k_2}\le u_{k_1-1}+u_{k_2-1}+ C_{ \star}, 
\end{equation}
where $C_{ \star}:=\mathrm{ln}(C_\dagger)$. Arbitrarily fix $k\in \mathbb{N}^+$. Then any integer $n\in \mathbb{N}$ can be written as $(k+2)m+r$ for some $m=m(k,n)\in \mathbb{N}$ and $r=r(k,n)\in \{0,1,...,k+1\}$. Thus, by applying (\ref{new44}) repeatedly, we have
\begin{equation}
	\begin{split}
		u_{n} \le u_{(k+2)m} \overset{}{ \le}  u_{(k+2)(m-1)} + u_{k}+  C_{ \star} \overset{}{ \le} ... \overset{}{ \le} m( u_{k}+  C_{ \star}).
	\end{split}
\end{equation}
By dividing both sides by $n$ and then taking the upper limit as $n\to \infty$, we obtain  
\begin{equation}\label{new46}
	\begin{split}
		\limsup_{n\to \infty}\frac{u_n}{n}  \le  \limsup_{n \to \infty} \frac{m (u_k + C_{ \star})}{ n}= \frac{u_k + C_{ \star}}{k+2}. 
	\end{split}
\end{equation} 
 Note that (\ref{newadd210}) implies $u_{k}\ge \mathrm{ln}\big( c\cdot C_\ddagger^{-k}  \big)= - \mathrm{ln}(C_\ddagger)\cdot k+\mathrm{ln}(c)$. Therefore, 
  \begin{equation*}
	\begin{split}
		\liminf_{k\to \infty }\frac{u_k + C_{ \star}}{k+2} =&  	\liminf_{k\to \infty } \big(\frac{u_k }{k} + \frac{C_{ \star}}{k+2}- \frac{2u_k}{k(k+2)}  \big) \\
		\le & 	\liminf_{k\to \infty } \big(\frac{u_k }{k} + \frac{C_{ \star}}{k+2}+ \frac{2\mathrm{ln}(C_\ddagger)\cdot k - 2\mathrm{ln}(c)}{k(k+2)}  \big) = \liminf_{k\to \infty }\frac{u_k}{k}.  
	\end{split}
\end{equation*}
Combined with (\ref{new46}), it yields that the limit $a:=\lim_{k\to \infty}\frac{u_k}{k}$ exists. As a result, 
\begin{equation}
	q(C_\ddagger^{-k}) = e^{ak+o(k)}, \  \forall k\ge 1.  
\end{equation}
This together with the monotonicity of $q(\epsilon)$ immediately implies the desired bound (\ref{ineq_main1}) with $\zeta=-a\cdot [\mathrm{ln}(C_{\ddagger})]^{-1}$.   \qed





\begin{remark}
	The proof in this subsection relies only on the scaling invariance of the Brownian loop soup and the decomposition in Corollary \ref{coro_quasi_brownian}, both of which hold for all 	$\alpha>0$. Therefore, the one-arm exponent of $\mathcal{L}_{\alpha}^{\mathbb{R}^3}$ exists for every $\alpha>0$. In addition, the argument in Remark \ref{remark_zeta_<1} also applies to all $\alpha>0$, which shows that this exponent is strictly less than $1$.  
\end{remark}

\subsection{Proof of $\zeta>\frac{1}{2}$} \label{section_zeta_larger_than}

For any $\delta>0$, we denote by $\mathcal{L}^{\ge \delta}$ the point measure consisting of loops in $\mathcal{L}_{1/2}^{\mathbb{R}^3}$ with diameter at least $\delta$. Since $\mathsf{F}_\epsilon:= \big\{ \bm{\mathcal{C}}(\epsilon) \xleftrightarrow{\cup \mathcal{L}_{1/2}^{\mathbb{R}^3}} \bm{\mathcal{C}}(1) \big\}$ is certified by finite many loops, we have 
\begin{equation}
	  \mathbbm{1}_{\mathsf{F}_\epsilon^{  \ge \delta}} := \mathbbm{1}_{ \bm{\mathcal{C}}(\epsilon) \xleftrightarrow{\cup \mathcal{L}^{\ge \delta}} \bm{\mathcal{C}}(1)} \overset{\mathrm{a.s.}}{\longrightarrow } \mathbbm{1}_{\mathsf{F}_\epsilon} \ \ \text{as}\  \delta \downarrow 0. 
\end{equation}
Therefore, since $\mathsf{F}_\epsilon^{ \ge \delta}$ is decreasing in $\delta$, it follows from the monotone convergence theorem that  
 \begin{equation}\label{new49}
 	  \mathbb{P}\big(\mathsf{F}_\epsilon  \big) = \lim\limits_{ \delta \downarrow 0} \mathbb{P}\big(\mathsf{F}_\epsilon^{\ge \delta}  \big). 
 \end{equation}
Thus, to obtain $\zeta>\frac{1}{2}$, it suffices to show that for some constant $c_\dagger>0$, 
\begin{equation}\label{add410}
	\mathbb{P}\big(\mathsf{F}_\epsilon^{\ge \delta}  \big) \lesssim \epsilon^{\frac{1}{2}+c_\dagger}, \ \ \forall \epsilon\in (0,1)\ \text{and}\  \delta\in (0, 
	  e^{-\epsilon^{-1}}). 
\end{equation}
 The remainder of this subsection is devoted to proving (\ref{add410}).

Without loss of generality, we assume that $\epsilon>0$ is sufficiently small. Arbitrarily fix $\delta\in (0, e^{-\epsilon^{-1}})$. Next, we approximate the event $\mathsf{F}_\epsilon^{\ge \delta}$ step by step, and control the error at each step. We define the event 
\begin{equation}
	\mathsf{G}:=  \big\{ \exists \ell \in  \mathcal{L}_{1/2}^{\mathbb{R}^3} \  \text{intersecting}\ \bm{\mathcal{C}}(1)\ \text{and}\ \bm{\mathcal{C}}(\epsilon^{-1})  \big\},  
\end{equation}
and let $\mathsf{F}^{(1)}:= \mathsf{F}_\epsilon^{\ge \delta}\cap  \mathsf{G}^c$. By the union bound and Lemma \ref{lemma_crossing_loop_mass}, we have 
\begin{equation}\label{show412}
	\mathbb{P}\big( \mathsf{F}_\epsilon^{\ge \delta}\big) \le \mathbb{P}\big( \mathsf{F}^{(1)} \big)+ \mathbb{P}\big( \mathsf{G} \big)\le   \mathbb{P}\big( \mathsf{F}^{(1)} \big)+ C\epsilon. 
\end{equation}
 Note that $\mathsf{F}_\epsilon^{\ge \delta}$ depends only on the loops  intersecting $\bm{\mathcal{B}}(1)$. In addition, the absence of $\mathsf{G}$ ensures that all these loops are contained in $\bm{\mathcal{B}}(\epsilon^{-1})$. Hence, the event $\mathsf{F}^{(1)}$ relies only on the loops contained in $\bm{\mathcal{B}}(\epsilon^{-1})$ with diameter at least $\delta$ (we denote the collection of these loops by $\mathcal{A}_*$). Subsequently, we approximate the loops in $\mathcal{A}_*$ by the discrete loops in $\mathcal{L}_{1/2}^{\mathbb{Z}^3}$. Recall the collections $\mathcal{A}_{\alpha,r,\theta,N}^{\mathbb{Z}^d}$ and $\mathcal{A}_{\alpha,r,\theta,N}^{\mathbb{R}^d}$ defined in (\ref{mathcalA_Zd}) and (\ref{mathcalA_Rd}), and the bijection $\varpi_{\alpha,r,\theta,N}$ given by Lemma \ref{lemma_coupling}. In this proof, we take $d=3$, $\alpha=1/2$, $\theta=1$, $r=2\epsilon^{-1}$ and $N=\lfloor e^{2\delta^{-1}}  \rfloor$. With this choice of parameters, we write $\overline{\mathcal{A}}:=\mathcal{A}_{\alpha,r,\theta,N}^{\mathbb{Z}^d}$, $\widetilde{\mathcal{A}}:=\mathcal{A}_{\alpha,r,\theta,N}^{\mathbb{R}^d}$ and $\varpi:=\varpi_{\alpha,r,\theta,N}$. By Lemma \ref{lemma_coupling}, there exists a constant $C_\ddagger>0$ such that  
 \begin{equation}\label{add413}
 	\begin{split}
 		 \mathbb{P}(\mathsf{A}^c) \le C_\ddagger(\alpha+1) r^3 N^{-\frac{1}{2}}\asymp \epsilon^{-3} e^{-\delta^{-1}} \lesssim \epsilon, 
 	\end{split}
 \end{equation}
 where $\mathsf{A}$ is the event that there exists a bijection $\varpi: \widetilde{\mathcal{A}} \to \overline{\mathcal{A}}$ satisfying 
 \begin{equation}\label{reviseaddto_415}
 		\mathrm{d}_{\mathrm{F}}\big(\ell,\varpi(\ell)\big) \le C_\ddagger N^{-\frac{1}{4}}\mathrm{ln}(N), \   \forall \ell\in \widetilde{\mathcal{A}}. 
 \end{equation}
 Note that each loop $\ell \in \mathcal{A}_* \setminus \widetilde{\mathcal{A}}$ has diameter at least $\delta \asymp [\mathrm{ln}(N)]^{-1}$ and satisfies $T_{\ell} \le N^{-1}$. In fact, with high probability such a loop is absent. Precisely, the ball $\bm{\mathcal{B}}(\epsilon^{-1})$ can be covered by $O((\epsilon \delta)^{-3})$ boxes of radius $\tfrac{\delta}{100}$. In addition, for each of these boxes, the expected number of loops intersecting it with diameter at least $\delta$ is of order $1$ (by Lemma \ref{lemma_crossing_loop_mass}); moreover, for any loop $\ell$ among them, if it satisfies $T_{\ell}\le N^{-1}$, then it contains a trajectory of Brownian motion with duration at most $N^{-1}$ that reaches a distance of $c[\mathrm{ln}(N)]^{-1}$, which occurs with probability at most $Ce^{-c'N [\mathrm{ln}(N)]^{-2}}$ (see e.g., \cite[Remark 2.22]{morters2010brownian}). To sum up, we obtain 
 \begin{equation}\label{add414}
	\mathbb{P}\big(  \mathcal{A}_* \not\subset  \widetilde{\mathcal{A}} \big) \lesssim (\epsilon \delta)^{-3}\cdot e^{-c'N [\mathrm{ln}(N)]^{-2}}\lesssim \epsilon. 
\end{equation}
We define the event $\mathsf{F}^{(2)}:= \mathsf{F}^{(1)}\cap \mathsf{A} \cap \{ \mathcal{A}_*  \subset  \widetilde{\mathcal{A}}\}$. By (\ref{add413}) and (\ref{add414}), we have 
\begin{equation}\label{show415}
	\begin{split}
		\mathbb{P}\big( \mathsf{F}^{(1)} \big)\le  \mathbb{P}\big( \mathsf{F}^{(2)} \big) + \mathbb{P}\big( \mathsf{A}^c \big) +  	\mathbb{P}\big(  \mathcal{A}_* \not\subset  \widetilde{\mathcal{A}}  \big)  \le \mathbb{P}\big( \mathsf{F}^{(2)} \big) +C\epsilon. 
	\end{split}
\end{equation}

We now estimate the probability of $\mathsf{F}^{(2)}$. When the event $\mathsf{F}^{(1)}$ occurs, there exists a sequence $\{\ell_i\}_{1\le i\le k}$ of loops in $\mathcal{A}_*$ such that $\mathrm{ran}(\ell_1)\cap  \bm{\mathcal{C}}(\epsilon)\neq \emptyset$, $\mathrm{ran}(\ell_k)\cap  \bm{\mathcal{C}}(1)\neq \emptyset$ and $\mathrm{ran}(\ell_i)\cap  \mathrm{ran}(\ell_{i+1})\neq \emptyset$ for all $1\le i\le k-1$. As a result, on $\mathsf{F}^{(2)}$, either the union of the ranges of  loops $\{\varpi(\ell_i)\}_{1\le i\le k}$ connects $\bm{\mathcal{C}}(2\epsilon)$ and $\bm{\mathcal{C}}(\frac{1}{2})$, or there exists $1\le i\le k-1$ such that $\varpi(\ell_i)$ and $\varpi(\ell_{i+1})$ are disjoint. For simplicity, we denote the Euclidean distance between the ranges of two curves $\eta,\eta'$ in $\mathbb{R}^3$ by $\|\eta- \eta'\|$. For each $1\le i\le k-1$, since $\mathrm{ran}(\ell_i)\cap  \mathrm{ran}(\ell_{i+1})\neq \emptyset$ (i.e., $\|\ell_i-\ell_{i+1}\|=0$), 
 \begin{equation}
 	\|\varpi(\ell_i) - \varpi(\ell_{i+1}) \| \le \|\ell_i - \varpi(\ell_i)  \| + \|\ell_{i+1} - \varpi(\ell_{i+1})  \| \overset{(\ref{reviseaddto_415})}{\le } 2C_{\ddagger} N^{-\frac{1}{4}}\mathrm{ln}(N). 
 \end{equation} 
  Thus, since each $\varpi(\ell_i)$ is contained in $\bm{\mathcal{B}}(4\epsilon^{-1})$ and has diameter at least $\frac{\delta}{2}$,   
\begin{equation}\label{show417}
	\mathbb{P}\big( \mathsf{F}^{(2)} \big) \le \mathbb{P}\big( \bm{\mathcal{C}}(\epsilon) \xleftrightarrow{N^{-1} \cdot \cup \mathcal{L}_{1/2}^{\mathbb{Z}^3}} \bm{\mathcal{C}}(1)  \big)  +  \mathbb{P} ( \mathsf{H}  ), 
\end{equation}
where $\mathsf{H}$ is the event that there exist two disjoint loops in $N^{-1}\cdot \mathcal{L}_{1/2}^{\mathbb{Z}^3}$ such that they are contained in $\bm{\mathcal{B}}(4\epsilon^{-1})$ and have diameter at least $\frac{\delta}{2}$, and their Euclidean distance is at most $2C_{\ddagger}N^{-\frac{1}{4}}\mathrm{ln}(N)$. Referring to Remark \ref{remark1.7_Zd}, one has 
\begin{equation}\label{show418}
	\begin{split}
		  \mathbb{P}\big( \bm{\mathcal{C}}(\epsilon) \xleftrightarrow{N^{-1} \cdot \cup \mathcal{L}_{1/2}^{\mathbb{Z}^3}} \bm{\mathcal{C}}(1)  \big)  
		\le  \mathbb{P}\big( B(10\epsilon N) \xleftrightarrow{ \cup \mathcal{L}_{1/2}^{\mathbb{Z}^3}} \partial B(\tfrac{N}{10})  \big) \lesssim  \epsilon^{\frac{1}{2}+c_\dagger}  
	\end{split}
\end{equation}
for some constant $c_\dagger>0$. We now estimate $\mathbb{P}(\mathsf{H})$. By a standard covering lemma, there exists a collection $\mathfrak{D}$ of $O\big((\epsilon \chi)^{-3}\big)$ boxes of radius $\chi:=\frac{10C_\ddagger \mathrm{ln}(N)}{N^{1/4}}$ that covers $\bm{\mathcal{B}}(4\epsilon^{-1})$. Therefore, on the event $\mathsf{H}$, there exists $\Lambda_z(\chi)\in \mathfrak{D}$ such that the collection $\mathfrak{L}_z:= \big\{ \widehat{\ell}\in N^{-1}\cdot \mathcal{L}_{1/2}^{\mathbb{Z}^3}:  \widehat{\ell}\ \text{intersects}\  \Lambda_z(\chi)\ \text{and}\ \bm{\mathcal{C}}_z(\frac{\delta}{4}) \big\}$ contains two disjoint loops (we denote this event by $\mathsf{H}_{\Lambda_z(\chi)}$). Thus, by applying the union bound, we have 
\begin{equation}\label{nice419}
	\mathbb{P} ( \mathsf{H}  ) \le \sum\nolimits_{\Lambda_z(\chi)\in \mathfrak{D} } \mathbb{P}\big( \mathsf{H}_{\Lambda_z(\chi)} \big) \lesssim (\epsilon \chi)^{-3} \cdot  \max_{\Lambda_z(\chi)\in \mathfrak{D}}\mathbb{P}(\mathsf{H}_{\Lambda_z(\chi)} ).  
\end{equation}

We present the following lemma to bound the right-hand side of (\ref{nice419}). 
 \begin{lemma}\label{newlemma41}
There exists $c_{\diamond}>0$ such that for any $z\in \mathbb{R}^3$, 
 	\begin{equation}\label{ineq_newlemma41}
 		\mathbb{P}\big(  \mathsf{H}_{\Lambda_z(\chi)}  \big) \lesssim  \big( \chi/ \delta  \big)^{3+c_{\diamond}}. 
 	\end{equation}
 \end{lemma}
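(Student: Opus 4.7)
The plan is to bound $\mathbb{P}(\mathsf{H}_{\Lambda_z(\chi)})$ by a first-moment argument over pairs of loops, and to harvest the needed saving from the disjointness constraint via a dyadic-scale application of the two-arm estimate of \cite{inpreparation_twoarm}.

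By translation invariance, we may take $z=\bm{0}$. Rescaling the lattice by $N$, the event becomes the existence of two disjoint loops in $\mathcal{L}_{1/2}^{\mathbb{Z}^3}$ intersecting the lattice box $B(m)$ and having diameter at least $M$, with $m := \lceil N\chi\rceil$ and $M := \lceil N\delta/2\rceil$; the target is $(m/M)^{3+c_\diamond}$. Applying Markov's inequality and Mecke's formula for the Poisson point process $\mathcal{L}_{1/2}^{\mathbb{Z}^3}$ yields
\[
\mathbb{P}(\mathsf{H}_{\Lambda_z(\chi)}) \;\leq\; \tfrac{1}{8}\int\!\!\int \mathbbm{1}_{\ell_1\cap\ell_2=\emptyset}\,\mathbbm{1}_{\ell_1,\ell_2\in\mathcal{L}^*}\,d\mu^{\mathbb{Z}^3}(\ell_1)\,d\mu^{\mathbb{Z}^3}(\ell_2),
\]
where $\mathcal{L}^*$ denotes the loops intersecting $B(m)$ with diameter $\geq M$. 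The $\mathbb{Z}^3$-version of Lemma \ref{lemma_crossing_loop_mass} (see \cite[Lemma 2.7]{chang2016phase}) gives $\mu^{\mathbb{Z}^3}(\mathcal{L}^*) \asymp m/M$, so that dropping the disjointness indicator produces the crude bound $(m/M)^2$; our task is to extract the missing factor $(m/M)^{1+c_\diamond}$ from $\mathbbm{1}_{\ell_1\cap\ell_2=\emptyset}$.

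For this, I plan to invoke the crossing-path decomposition of Section \ref{section_pre} at dyadic scales. Partition $B(M)\setminus B(2m)$ into annuli $A_k := B(2^{k+1}m)\setminus B(2^k m)$ for $0\leq k\leq \lfloor \log_2(M/m)\rfloor$. On $\mathsf{H}_{\Lambda_{\bm{0}}(\chi)}$, the two vertex-disjoint loops $\ell_1,\ell_2$ each supply forward/backward crossings of every $A_k$, and these crossings are mutually vertex-disjoint; by Remark \ref{remark1.7_Zd} they embed into $\cup(\mathcal{L}_{1/2}^{\widetilde{\mathbb{Z}}^3}-\mathfrak{L})$, so the resulting configuration at scale $2^k m$ is controlled by a two-arm-type event in $\widetilde{\mathbb{Z}}^3$. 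Using (\ref{thm1_small_n}), whose exponent $5/2$ exceeds twice the one-arm exponent $1/2$ by $3/2$, each such scale produces a multiplicative saving by a constant factor $\rho<1$ (essentially $\rho \asymp 2^{-3/2}$) relative to two independent one-arm crossings. Iterating this saving across the $\asymp \log_2(M/m)$ dyadic scales converts it into a polynomial improvement $\rho^{\log_2(M/m)} = (m/M)^{c_\diamond}$ with $c_\diamond := -\log_2\rho>0$, which combined with the crude $(m/M)^2$ bound gives $(m/M)^{3+c_\diamond}$.

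The delicate point is that (\ref{thm1_small_n}) is a bound for two \emph{distinct clusters} of $\mathcal{L}_{1/2}^{\widetilde{\mathbb{Z}}^3}$, whereas our event only supplies two \emph{vertex-disjoint loops} in $\mathcal{L}_{1/2}^{\mathbb{Z}^3}$---these may still lie in the same $\widetilde{\mathbb{Z}}^3$-cluster via fundamental loops outside the pair or via the edge/point loops from Properties (i)--(iii) of the introduction. Handling this requires, at each dyadic scale, conditioning on the additional loops responsible for the extra connectivity and showing that their contribution costs only a multiplicative constant. Here the exponential decay of the crossing numbers (\ref{ineq_exp_decay_crossing}), together with the quasi-multiplicativity in the spirit of Corollary \ref{coro_quasi_brownian} (in its $\widetilde{\mathbb{Z}}^3$-analogue), should provide the required control; carrying out this conditioning uniformly across the $\asymp \log_2(M/m)$ scales is the main obstacle of the proof.
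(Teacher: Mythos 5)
Your proposal is a genuinely different route from the paper's, but it contains a gap that you yourself flag and that I believe is not easily fixable in the form you sketch. The paper does \emph{not} use dyadic annuli or the two-arm estimates of \cite{inpreparation_twoarm} at all. Instead, it first uses the Poisson tail bound to dismiss the case of four or more crossing loops (probability $\lesssim (\chi/\delta)^4$), and reduces the three-loop case to the two-loop case by a union bound. In the two-loop case $\mathsf{H}^{(2)}$, the paper conditions on one crossing loop $\ell_1$ and shows, via a ``hard obstacle'' / cut-point estimate for simple random walk (\cite[Lemma 2.5]{cutpointlemma}), that with probability $\ge 1-(\chi/\delta)^4$ the range of $\ell_1$ is so dense near $\Lambda_z(\chi)$ that any path started in $\Lambda_{Nz}(\chi N)$ escapes to radius $\delta N/4$ while avoiding $N\cdot\mathrm{ran}(\ell_1)$ only with probability $\le (\chi/\delta)^{c_\star}$. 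Combining this with the spatial Markov property of the loop soup, a last-exit decomposition for the other half of the crossing, and an auxiliary estimate from \cite[Remark 6.5]{inpreparation_pivotal}, gives the extra power beyond $(\chi/\delta)^2$. No multi-scale iteration and no two-arm input are needed.

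The concrete problem with your approach is the one you label as the ``main obstacle'': the two-arm estimate (\ref{thm1_small_n}) bounds the probability that two \emph{distinct loop clusters} of $\mathcal{L}_{1/2}^{\widetilde{\mathbb{Z}}^3}$ cross, whereas $\mathsf{H}_{\Lambda_z(\chi)}$ only guarantees two \emph{vertex-disjoint individual loops} in $\mathcal{L}_{1/2}^{\mathbb{Z}^3}$. Your event is not contained in the two-arm event: the two loops can, and generically will, belong to a single metric-graph cluster once one adds the point loops, edge loops, and the Brownian excursions of Properties (i)--(iii), as well as other fundamental loops in the annulus. Thus (\ref{thm1_small_n}) does not even give an upper bound without an additional decoupling step, and that step would have to absorb uniformly, over $\asymp\log_2(\delta/\chi)$ scales, the cost of conditioning on the absence of connecting loops; you do not provide a mechanism for this, and it is not clear that the quasi-multiplicativity machinery you cite controls it without circularity. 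Moreover, the magnitudes are heterogeneous: a single loop crossing an annulus of modulus $2$ occurs with probability $\asymp 1/2$ at each scale (the loop mass is $\asymp 2^{2-d}$ in $\mathbb{Z}^d$), which is not the quantity that appears in the one-arm or two-arm exponents for clusters, so the per-scale comparison ``$\rho \asymp 2^{-3/2}$ relative to two independent one-arms'' is comparing apples to oranges. The paper's argument sidesteps all of this by working directly with the two loops, the random-walk obstacle lemma, and the spatial Markov property; I would encourage you to study that route, since your two-arm strategy as written does not close.
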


 By plugging (\ref{ineq_newlemma41}) into (\ref{nice419}), one has 
 \begin{equation}\label{show421}
	\mathbb{P}\big( \mathsf{H} \big) \lesssim   \epsilon^{-3}\delta^{-3-c_\diamond}\chi^{c_\diamond}\lesssim \epsilon.  
\end{equation}
Putting (\ref{show417}), (\ref{show418}) and (\ref{show421}) together, we obtain $\mathbb{P}\big( \mathsf{F}^{(2)} \big) \lesssim \epsilon^{\frac{1}{2}+c_\dagger}$. Combined with (\ref{show412}) and (\ref{show415}), it implies (\ref{add410}). In conclusion, we have confirmed $\zeta>\frac{1}{2}$ assuming Lemma \ref{newlemma41}. We now verify Lemma \ref{newlemma41} to complete the proof.




\begin{proof}[Proof of Lemma \ref{newlemma41}]
 On $\mathsf{H}_{\Lambda_z(\chi)}\cap \{ |\mathfrak{L}_z|\ge 3\}$, there exist two loops $\ell$ and $\ell'$ in $\mathfrak{L}_z$ that certify $\mathsf{H}_{\Lambda_z(\chi)}$ (i.e., $\mathrm{ran}(\ell)\cap \mathrm{ran}(\ell')=\emptyset$); in addition, any other loop in $\mathfrak{L}_z$ certifies $\{\mathfrak{L}_z\neq \emptyset \}$. Thus, by the BKR inequality (see e.g., \cite[Lemma 3.3]{cai2024high}),  
	\begin{equation}\label{fix422}
		\begin{split}
			\mathbb{P}\big( \mathsf{H}_{\Lambda_z(\chi)}\cap \{ |\mathfrak{L}_z|\ge 3\} \big)  \le \mathbb{P}\big( \mathsf{H}_{\Lambda_z(\chi)}  \big)  \cdot \mathbb{P}\big( \mathfrak{L}_z\neq \emptyset \big). 
		\end{split}
	\end{equation}
	This together with the inclusion $\mathsf{H}_{\Lambda_z(\chi)}\subset \{|\mathfrak{L}_z|\ge 2\}$ implies that 
	\begin{equation}\label{newfix_422}
		\mathbb{P}\big( \mathsf{H}_{\Lambda_z(\chi)}  \big) \le \big[ 1- \mathbb{P}\big( \mathfrak{L}_z\neq \emptyset \big)\big]^{-1}\cdot \mathbb{P}\big( \widehat{\mathsf{H}} \big), 
	\end{equation}
	where $\widehat{\mathsf{H}}:= \mathsf{H}_{\Lambda_z(\chi)}\cap \{ |\mathfrak{L}_z|=  2\} $. According to \cite[Lemma 2.7]{chang2016phase}, one has 
	 \begin{equation}\label{good422}
	 	\mu^{\mathbb{Z}^3}\big( \big\{\ell : \ell\ \text{intersects both}\ \Lambda_{N\cdot z}(\chi N)\ \text{and}\ \bm{\mathcal{C}}_{N\cdot z}(\tfrac{\delta N}{4}) \big\} \big) \lesssim \chi/ \delta.  
	 \end{equation}
		Therefore, $\mathbb{P} ( \mathfrak{L}_z\neq \emptyset  )  \le \frac{C\chi}{\delta}<\frac{1}{2}$. Combined with (\ref{newfix_422}), it yields that  
		 \begin{equation}\label{fix_424}
		 	\mathbb{P}\big( \mathsf{H}_{\Lambda_z(\chi)}  \big) \le 2\cdot \mathbb{P}\big(\widehat{\mathsf{H}} \big). 
		 \end{equation}


		 In what follows, we estimate the probability of $\widehat{\mathsf{H}}$. When $\{|\mathfrak{L}_z|=  2\}$ occurs, we denote the loops in $\mathfrak{L}_z$ by $\ell_1$ and $\ell_2$. Hence, $\widehat{\mathsf{H}}$ can be equivalently written as $\{ |\mathfrak{L}_z|= 2, \mathrm{ran}(\ell_1)\cap \mathrm{ran}(\ell_2)=\emptyset\}$. Note that $\ell_1$ contains a trajectory $\eta$ of simple random walk on $N^{-1}\cdot \mathbb{Z}^3$ starting from $\Lambda_{z}(\chi )$ and stopped upon hitting $\bm{\mathcal{C}}_{z}(\tfrac{\delta}{4})$. By \cite[Lemma 2.5]{cutpointlemma}, there exists $c_{\star}>0$ such that 
	 \begin{equation}\label{show423}
	 	\mathbb{P}_{\eta}\Big( \max_{x\in \Lambda_{N\cdot  z}(\chi N)\cap  \mathbb{Z}^3} \mathbb{P}_{x}^{\mathbb{Z}^3}\big( \tau_{\bm{\mathcal{C}}_{N\cdot z}(\frac{\delta N}{4})} <\tau_{ N\cdot \mathrm{ran}( \eta) } \big) \ge (\chi/ \delta)^{c_{\star}}  \Big) \le  (\chi/ \delta)^{4}.   
	 \end{equation}
	 Let $\mathsf{R}_\star$ denote the counterpart of the event on the left-hand side of (\ref{show423}) obtained by replacing $\eta$ with $\ell_1$. Since $\mathrm{ran}(\eta)\subset \mathrm{ran}(\ell_1)$, it follows from (\ref{show423}) that $\mathbb{P}\big( \mathsf{R}_\star \big) \le    (\chi/ \delta)^{4}$. For any $x\in \mathbb{R}^3$ and $r>0$, we denote $\mathcal{B}_x(r):=\bm{\mathcal{B}}_x(r)\cap \mathbb{Z}^3$. By the spatial Markov property of loops in $\mathcal{L}_{1/2}^{\mathbb{Z}^d}$, the conditional probability of $\{\mathrm{ran}(\ell_1)\cap \mathrm{ran}(\ell_2)= \emptyset\}$ given $\ell_1$ is bounded from above by 
	 \begin{equation}\label{great424}
	 	\begin{split}
	 	\max_{y_1\in \Lambda_{N\cdot  z}(\chi N)\cap  \mathbb{Z}^3, y_2\in [\mathcal{B}_{N\cdot z}(\frac{\delta N}{4})]^c  }	 & \mathbb{P}_{ y_1}^{\mathbb{Z}^3}\big( \tau_{\bm{\mathcal{C}}_{N\cdot z}(\frac{\delta N}{4})} <\tau_{ N\cdot \mathrm{ran}(\ell_1) }    \big)  \\
	 	 & \cdot \mathbb{P}_{ y_2}^{\mathbb{Z}^3}\big( \tau_{\Lambda_{N\cdot  z}(\chi N)} <\tau_{ N\cdot \mathrm{ran}(\ell_1) }  \big).  
	 	\end{split}
	 \end{equation}
	 Note that when $\mathsf{R}_\star^c$ occurs, one has 
	 \begin{equation}\label{new425}
	 	\mathbb{P}_{ y_1}^{\mathbb{Z}^3}\big( \tau_{\bm{\mathcal{C}}_{N\cdot z}(\frac{\delta N}{4})} <\tau_{ N\cdot \mathrm{ran}(\ell_1) }    \big)  \le (\chi/ \delta)^{c_{\star}}. 
	 \end{equation}
	 Meanwhile, for any $y_2\in [\mathcal{B}_{N\cdot z}(\frac{\delta N}{4})]^c$, by the last-exit decomposition (see e.g., \cite[Proposition 4.6.4]{lawler2010random}), we have 
	 	 \begin{equation}\label{new426}
	 	 	\begin{split}
	 	 		& \mathbb{P}_{ y_2}^{\mathbb{Z}^3}\big( \tau_{\Lambda_{N\cdot  z}(\chi N)} <\tau_{ N\cdot \mathrm{ran}(\ell_1) }  \big) \\
	 	 		\lesssim  & \sum\nolimits_{y_3\in  \partial \mathcal{B}_{N\cdot z}(10 \chi N)} G^{\mathbb{Z}^3}_{N\cdot \mathrm{ran}(\ell_1)}(y_2,y_3) \\ 
	 		   &\cdot \sum\nolimits_{y_3'\in \mathcal{B}_{N\cdot z}(10 \chi N):  \{y_3,y_3'\}\in \mathbb{L}^3 } \mathbb{P}_{ y_3'}^{\mathbb{Z}^3}\big( \tau_{\Lambda_{N\cdot  z}(\chi N)} <\tau_{ \partial \mathcal{B}_{N\cdot z}(10 \chi N) }  \big)\\
	 		   \lesssim  &  (\chi N)^{-1}\sum\nolimits_{y_3\in  \partial \mathcal{B}_{N\cdot z}(10 \chi N)} G^{\mathbb{Z}^3}_{N\cdot \mathrm{ran}(\ell_1)}(y_2,y_3), 
	 	 	\end{split}
	 	 \end{equation}
	 	 where the last inequality follows from \cite[Lemma 6.3.4]{lawler2010random}. Moreover, for any $y_2\in [\mathcal{B}_{N\cdot z}(\frac{\delta N}{4})]^c$ and  $y_3\in  \partial \mathcal{B}_{N\cdot z}(10 \chi N)$, we have 
	 	 \begin{equation}
	 	 	\begin{split}
	 	 		G^{\mathbb{Z}^3}_{N\cdot \mathrm{ran}(\ell_1)}(y_2,y_3) \lesssim & \mathbb{P}_{ y_3}^{\mathbb{Z}^3}\big( \tau_{y_2} <\tau_{ N\cdot \mathrm{ran}(\ell_1) }  \big)  \\
	 	 		\lesssim & \mathbb{P}_{ y_3}^{\mathbb{Z}^3}\big( \tau_{\partial \mathcal{B}_{N\cdot z}(\frac{\delta N}{8})} <\tau_{ N\cdot \mathrm{ran}(\ell_1) }  \big)  \max_{ y_4 \in \partial \mathcal{B}_{N\cdot z}(\frac{\delta N}{8}) }  \mathbb{P}^{\mathbb{Z}^3}_{y_4}(\tau_{y_2}<\infty) \\
	 	 		\lesssim & (\delta N)^{-1}\cdot \mathbb{P}_{ y_3}^{\mathbb{Z}^3}\big( \tau_{\partial \mathcal{B}_{N\cdot z}(\frac{\delta N}{8})} <\tau_{ N\cdot \mathrm{ran}(\ell_1) }  \big). 
	 	 	\end{split}
	 	 \end{equation}
	 	Combined with (\ref{new426}), it yields that 
	 	\begin{equation}\label{new428}
	 	\begin{split}
	 	&	\mathbb{P}_{ y_2}^{\mathbb{Z}^3}\big( \tau_{\Lambda_{N\cdot  z}(\chi N)} <\tau_{ N\cdot \mathrm{ran}(\ell_1) }  \big) \\
	 		 \lesssim & \delta^{-1}  \chi^{-1}N^{-2}\cdot \sum\nolimits_{y_3\in  \partial \mathcal{B}_{N\cdot z}(10 \chi N)}\mathbb{P}_{ y_3}^{\mathbb{Z}^3}\big( \tau_{\partial \mathcal{B}_{N\cdot z}(\frac{\delta N}{8})} <\tau_{ N\cdot \mathrm{ran}(\ell_1) }  \big). 
	 	\end{split}	 		
	 	\end{equation}
	 	 Plugging (\ref{new425}) and (\ref{new428}) into (\ref{great424}), we obtain 	 	    \begin{equation}\label{for429}
	   \begin{split}
	&    \mathbb{P}\big( \widehat{\mathsf{H}},  \mathsf{R}_\star^c \big) = 	  \mathbb{P}\big(|\mathfrak{L}_z|= 2, \mathrm{ran}(\ell_1)\cap \mathrm{ran}(\ell_2)= \emptyset,  \mathsf{R}_\star^c \big) \\
	 	 \lesssim  & \delta^{-1-c_{\star}}\chi^{-1+c_{\star}}N^{-2} \sum_{y_3\in  \partial \mathcal{B}_{N\cdot z}(10 \chi N)}  \mathbb{E}\big[ \mathbbm{1}_{  |\mathfrak{L}_z|\ge 1 } \cdot \mathbb{P}_{ y_3}^{\mathbb{Z}^3}\big( \tau_{\partial \mathcal{B}_{N\cdot z}(\frac{\delta N}{8})} <\tau_{ N\cdot \mathrm{ran}(\ell_1) }  \big)  \big].
	 	 	   \end{split} 
	 \end{equation}
	It was shown in \cite[Remark 6.5]{inpreparation_pivotal} that for any $y_3\in  \partial \mathcal{B}_{N\cdot z}(10 \chi N)$,   	
	 \begin{equation}\label{for430}
	 	 	\mathbb{E}\big[ \mathbbm{1}_{  |\mathfrak{L}_z|\ge 1 } \cdot \mathbb{P}_{ y_3}^{\mathbb{Z}^3}\big( \tau_{\partial \mathcal{B}_{N\cdot z}(\frac{\delta N}{8})} <\tau_{ N\cdot \mathrm{ran}(\ell_1) }  \big)  \big] \lesssim (\chi/ \delta)^{2-\frac{1}{2}c_{\star}}. 
	 	 \end{equation}
	Inserting (\ref{for430}) into (\ref{for429}), and using $| \partial \mathcal{B}_{N\cdot z}(10 \chi N)|\asymp (\chi N)^2$, we derive
	 	 \begin{equation}
	 	 	   \mathbb{P}\big( \widehat{\mathsf{H}} ,  \mathsf{R}_\star^c \big)  \lesssim (\chi/ \delta)^{3+\frac{1}{2}c_{\star}}. 
	 	 \end{equation}
	 	  Combined with $\mathbb{P}\big( \mathsf{R}_\star \big) \lesssim   (\chi/ \delta)^{4}$, it implies that 
	 	 \begin{equation}\label{newadd431}
	 	 	\mathbb{P}\big(\widehat{\mathsf{H}} \big) \lesssim (\chi/ \delta)^{3+\frac{1}{2}c_{\star}}.
	 	 \end{equation}

	 	Substituting (\ref{newadd431}) into (\ref{fix_424}), we complete the proof. 
	  	 	 \end{proof}



 \subsection{Upper bound on the cluster dimension} \label{section_cluster_dimension}

   We arbitrarily fix $x\in \mathbb{R}^3$ and $r_1,r_2>0$. For any $\delta\in (0,\frac{r_1}{100})$, we define 
   \begin{equation}\label{good412}
   	 \Phi_\delta:= \big\{ z\in (2\delta)\cdot \mathbb{Z}^3: \Lambda_z(\delta)\cap \bm{\mathcal{B}}_x(r_1)\neq \emptyset \big\}. 
   \end{equation}
   For any $A\subset \mathbb{R}^3$, its upper box-counting dimension is defined by 
   \begin{equation}\label{good413}
   \overline{\mathrm{dim}}_{\mathrm{B}}(A ):= \limsup_{\delta \downarrow 0}  \frac{\mathrm{log}_{10}\big( | \{z\in (2\delta)\cdot \mathbb{Z}^3: \Lambda_z(\delta)\cap  A\neq \emptyset \}|\big)}{\mathrm{log}_{10}\big(1/\delta \big) }.
   \end{equation}
   As a result, for any $a>0$, if there exists a cluster $\mathcal{C}\in \mathfrak{C}_{r_2}$ (see the definition of $\mathfrak{C}_{r_2}$ before Theorem \ref{theorem_main}) such that $\overline{\mathrm{dim}}_{\mathrm{B}}(\mathcal{C}\cap \bm{\mathcal{B}}_x(r_1))\ge  3-\zeta+a$, then the event 
    \begin{equation}\label{good414}
 \mathsf{G}_a:= \Big\{    \limsup_{\delta \downarrow 0}  \frac{\mathrm{log}_{10}\big( |\Psi_\delta|\big)}{\mathrm{log}_{10}\big(1/\delta \big) }\ge  3-\zeta+a \Big\}  
   \end{equation}  
 occurs, where $\Psi_\delta :=  \{ z\in  \Phi_\delta : \exists \mathcal{C}\in \mathfrak{C}_{r_2}\ \text{such that}\  \Lambda_z(\delta) \cap \mathcal{C}\neq \emptyset  \}$.

  Note that for any $k\in \mathbb{N}$, $\delta\in [10^{-k-1},10^{-k}]$ and $z\in \Phi_\delta$, there exists $z'\in \Phi_{10^{-k+1}}$ such that $\Lambda_z(\delta)\subset \Lambda_{z'}(10^{-k+1})$, and hence any cluster intersecting $\Lambda_z(\delta)$ must hit $\Lambda_{z'}(10^{-k+1})$. Moreover, for each $z'\in \Phi_{10^{-k+1}}$, the box $\Lambda_{z'}(10^{-k+1})$ contains at most $10^6$ boxes in $\{\Lambda_z(\delta): z\in \Phi_\delta\}$. As a result, we have $|\Psi_\delta|\le 10^6\cdot |\Psi_{10^{-k+1}}|$ for all $\delta\in [10^{-k-1},10^{-k}]$, which further implies  
    \begin{equation}\label{new4.15}
   	\mathsf{G}_a\subset \mathsf{G}_a':= \Big\{    \limsup_{k\to \infty}  \frac{\mathrm{log}_{10}\big( |\Psi_{10^{-k+1}}|\big)}{k}\ge  3-\zeta+a \Big\}. 
   \end{equation} 
   On the event $\mathsf{G}_a'$, for each integer $m\ge 1$, there exists $k \ge m$ such that $|\Psi_{10^{-k+1}}|\ge 10^{(3-\zeta+\frac{1}{2}a)k}$ occurs. Thus, for any $m\ge 1$, by Markov's inequality we have 
   \begin{equation}\label{new416}
   \begin{split}
   	 	\mathbb{P}\big(\mathsf{G}_a'  \big) \le & \sum\nolimits_{k\ge m}  \mathbb{P}\big(|\Psi_{10^{-k+1}}|\ge 10^{(3-\zeta+\frac{1}{2}a)k}  \big) \\
   	 	\le &\sum\nolimits_{k\ge m} 10^{-(3-\zeta+\frac{1}{2}a)k} \cdot \mathbb{E}\big[ |\Psi_{10^{-k+1}}|\big] .
   \end{split}
   \end{equation}
For all sufficiently large $k\ge 1$, since $\{ z\in  \Psi_{10^{-k+1}} \} \subset \{ \bm{\mathcal{C}}_{z}(10^{-k+2}) \xleftrightarrow{\cup \mathcal{L}_{1/2}^{\mathbb{R}^3}}  \bm{\mathcal{C}}_z(\frac{1}{2}r_2)   \} $,   
 \begin{equation}\label{new417}
 	\begin{split}
 		\mathbb{E}\big[ |\Psi_{10^{-k+1}}|\big]\le& \big|    \Phi_{10^{-k+1}} \big|\cdot  \mathbb{P}\big(  \bm{\mathcal{C}}(10^{-k+2}) \xleftrightarrow{\cup \mathcal{L}_{1/2}^{\mathbb{R}^3}}  \bm{\mathcal{C}}(\tfrac{1}{2}r_2)  \big)\\
 		\lesssim & 10^{3k} \cdot  \big( \frac{\tfrac{1}{2}r_2 }{10^{-k+2}} \big)^{-\zeta+\frac{1}{4}a}  \asymp  10^{(3-\zeta+\frac{1}{4}a)k}, 
 	\end{split}
 \end{equation}
 where in the second inequality we applied (\ref{new41}) and the existence of the one-arm exponent $\zeta$. Plugging (\ref{new417}) into (\ref{new416}), and then combining with (\ref{new4.15}), we obtain 
 \begin{equation}\label{new418}
 	\mathbb{P}\big(\mathsf{G}_a  \big) \lesssim  \sum\nolimits_{k\ge m}  10^{-\frac{1}{4}ak} =\big[1-10^{-\frac{a}{4}} \big]^{-1}\cdot 10^{-\frac{1}{4}am}. 
 \end{equation}
 Since (\ref{new418}) holds for all $m\ge 1$, one has $\mathbb{P}\big(\mathsf{G}_a  \big)=0$, which further implies
 \begin{equation}
 	\mathbb{P}\big( \exists \mathcal{C}\in \mathfrak{C}_{r_2} \ \text{such that}\   \overline{\mathrm{dim}}_{\mathrm{B}}(\mathcal{C}\cap \bm{\mathcal{B}}_x(r_1))\ge  3-\zeta+a\big) =0.
 \end{equation}
 Noting that $a>0$ is arbitrary, this yields (\ref{ineq_main_2}).  \qed

 \vspace*{0.2cm}

  Recall that $\zeta\le 1$ has been derived from (\ref{newadd210}). To sum up, we have completed the proof of Theorem \ref{theorem_main}. \qed

 \section{One-arm exponent and dimension of the scaling limit}  \label{section5_S_star}

  The goal of this section is to establish Theorem \ref{theorem_main_part2}. We begin by proving the estimate (\ref{ineq_main_part2_1}) for the one-arm probability. For any $R,r>0$, we define the event 
  \begin{equation}
  	\mathsf{F}_{r,R}:=\{  \exists   \mathcal{C}^* \in \mathfrak{C}_{R}^*\ \text{such that}\   \mathcal{C}^* \cap \Lambda(r ) \neq \emptyset  \}. 
  \end{equation}
 Note that there exists $c_\star>0$ such that for any $r>0$, 
 \begin{equation}\label{newfix_box_contain}
 	\Lambda(c_\star r ) \subset \bm{\mathcal{B}}(r)\subset \Lambda( r ). 
 \end{equation}
  Therefore, for all sufficiently small $\epsilon>0$, 
    \begin{equation}\label{new51}
\mathbb{P}\big( \mathsf{F}_{c_\star\epsilon ,2}  \big)  \le  	 \mathbb{P}\big( \bm{\mathcal{C}}(\epsilon) \xleftrightarrow{  \mathcal{S}^*} \bm{\mathcal{C}}(1) \big)  \le  \mathbb{P}\big( \mathsf{F}_{\epsilon ,\frac{1}{2}}  \big).  
  \end{equation}
  For any $n\ge 1$, we denote by $\mathsf{F}_{r,R}^n$ the counterpart of $\mathsf{F}_{r,R}$ obtained by replacing $\mathcal{S}^*$ with $n^{-1}\cdot \cup  \mathcal{L}_{1/2}^{\widetilde{\mathbb{Z}}^d}$. It follows from Property (II) presented before Theorem \ref{theorem_main_part2} that  
  \begin{equation}\label{new52}
  	\mathbb{P}\big(\mathsf{F}_{r,R}  \big)= \lim\limits_{n\to \infty}\mathbb{P}\big(\mathsf{F}_{r,R}^n  \big), \ \forall R,r>0.
  \end{equation}

  Recall $\rho_d$ in (\ref{def_rho_crossing}). For any $r>0$ and $R>200c_\star^{-1}r$, the inclusion (\ref{newfix_box_contain}) implies
  \begin{equation}\label{new53}
 \rho_d(\tfrac{rn}{10}, 20Rn)     \le 	\mathbb{P}\big(\mathsf{F}_{r,R}^n  \big) \le   \rho_d(10rn, \tfrac{c_\star Rn}{20}). 
  \end{equation}
  Moreover, the estimate (\ref{crossing_low}) implies  
  \begin{equation}\label{new54}
\rho_d(\tfrac{rn}{10}, 20Rn)   \asymp \rho_d(10rn, \tfrac{c_\star Rn}{20}) \asymp \big( r/R \big)^{\frac{d}{2}-1}.  
  \end{equation} 
    Putting (\ref{new52}), (\ref{new53}) and (\ref{new54}) together, we get 
  \begin{equation}
  	\mathbb{P}\big(\mathsf{F}_{r,R}  \big) \asymp \big( r/R \big)^{\frac{d}{2}-1}.  \end{equation}
  Combined with (\ref{new51}), it yields Item (1) of Theorem \ref{theorem_main_part2}.


  We now turn to the proof of Item (2). Arbitrarily fix $x\in \mathbb{R}^d$ and $r_1,r_2>0$. By repeating the arguments in Section \ref{section_cluster_dimension} (with the one-arm estimate (\ref{ineq_main1}) replaced by (\ref{ineq_main_part2_1})), one has   
   \begin{equation}\label{new56}
   	\mathbb{P}\big( \overline{\mathrm{dim}}_{\mathrm{B}}(\mathcal{C}^*\cap \bm{\mathcal{B}}_x(r_1))\le \tfrac{d}{2}+1, \forall   \mathcal{C}^*\in \mathfrak{C}_{r_2}^*\big)  =1. 
   \end{equation}
   Note that the dimension bound $\tfrac{d}{2}+1$ in (\ref{new56}) equals $d-(\tfrac{d}{2}-1)$. Here $d$ and $\tfrac{d}{2}-1$ are the Euclidean dimension and the one-arm exponent, playing the roles of $3$ and $\zeta$ in (\ref{ineq_main_2}) respectively. By (\ref{new56}), it is sufficient to show that 
   \begin{equation}\label{new57}
   \mathbb{P}(\mathsf{G}):=  \mathbb{P}\big( \exists \mathcal{C}^* \in \mathfrak{C}_{r_2}^*\ \text{such that}\ 
 \overline{\mathrm{dim}}_{\mathrm{B}}( \mathcal{C}^* \cap \bm{\mathcal{B}}_x(r_1)) \ge \tfrac{d}{2}+1\big) >0.  
   \end{equation}
   To this end, we take a large constant $C_{\clubsuit}>0$ and define 
   \begin{equation}
   		\Phi_\delta :=  \big\{ z\in (2\delta)\cdot \mathbb{Z}^d: \Lambda_z(\delta)\cap \bm{\mathcal{B}}_x( \tfrac{r_1\land r_2}{C_{\clubsuit}}   )\neq \emptyset \big\}, 
   \end{equation}
   \begin{equation}
   	 \Psi_\delta^*:=   \{ z\in  \Phi_\delta : \exists \mathcal{C}^*\in \mathfrak{C}_{r_2}^*\ \text{such that} \   \Lambda_z(\delta) \cap \mathcal{C}^*\neq \emptyset  \}. 
   \end{equation}
For any $k\in \mathbb{N}^+$ and $a>0$, we define the event $\mathsf{H}_{k,a}:= \big\{   | \Psi_{10^{-k}}^* |  \ge a\cdot 10^{(\frac{d}{2}+1)k}  \big\}$. Let $\widehat{\mathfrak{C}}_{r_2}^{*}$ denote the collection of clusters in $\mathfrak{C}_{r_2}^*$ that intersect $\bm{\mathcal{B}}_x(2r_1)$.

We claim the following results:
 \begin{itemize}

 	\item There exist constants $c_{\spadesuit},c_{\diamondsuit}>0$ such that 
   \begin{equation}\label{claim59}
   	\mathbb{P}(\mathsf{H}):= \mathbb{P}\big( \cap_{m\ge 1}\cup_{k\ge m} \mathsf{H}_{k,c_{\spadesuit}} \big) \ge  c_{\diamondsuit}. 
   \end{equation}

 	\item There exists a constant $C_{\heartsuit}>0$ such that 
 	 \begin{equation}\label{claim510}
 	 	\mathbb{P}\big( |\widehat{\mathfrak{C}}_{r_2}^* | \ge C_{\heartsuit}  \big)  \le \tfrac{1}{2}c_{\diamondsuit}. 
 	 \end{equation}

 \end{itemize}
We first prove (\ref{new57}) assuming them. By (\ref{claim59}) and (\ref{claim510}), one has 
\begin{equation}\label{newadd511}
	\mathbb{P}\big(\mathsf{H} \cap  \{|\widehat{\mathfrak{C}}_{r_2}^* | <  C_{\heartsuit}   \} \big)  \ge \tfrac{1}{2}c_{\diamondsuit}. 
\end{equation}
On $\{|\widehat{\mathfrak{C}}_{r_2}^* | <  C_{\heartsuit}\}$, we enumerate all clusters in $\widehat{\mathfrak{C}}_{r_2}^*$ as $\{ \mathcal{C}^*_j\}_{1\le j\le K}$, where $K < C_{\heartsuit}$. Note that for all sufficiently large $k$, every cluster contributing to $\Psi_{10^{-k}}^*$ must intersect $\bm{\mathcal{B}}_x(2r_1)$, and hence belongs to $\widehat{\mathfrak{C}}_{r_2}^*$. Therefore, when $\mathsf{H}_{k,c_{\spadesuit}} \cap  \{|\widehat{\mathfrak{C}}_{r_2}^* | <  C_{\heartsuit}   \}$ occurs, by the pigeonhole principle there exists $j\in [1,K]$ such that $\mathcal{C}^*_j$ intersects at least $(C_{\heartsuit})^{-1}c_{\spadesuit}10^{(\frac{d}{2}+1)k}$ boxes in $\{\Lambda_z(10^{-k}): z\in \Phi_{10^{-k}}\}$ (we denote this event by $\mathsf{H}_{k,c_{\spadesuit}}^{(j)}$). Consequently, on $\mathsf{H} \cap  \{|\widehat{\mathfrak{C}}_{r_2}^* | <  C_{\heartsuit}\}$ (where $\{\mathsf{H}_{k,c_{\spadesuit}}\}_{k\ge 1}$ occur infinitely often), there must exist $j_\dagger\in [1,K]$ such that $\{\mathsf{H}_{k,c_{\spadesuit}}^{(j_\dagger)}\}_{k\ge 1}$ occur infinitely often. This together with the formula (\ref{good413}) yields that $\overline{\mathrm{dim}}_{\mathrm{B}}( \mathcal{C}^*_{j_\dagger}\cap \bm{\mathcal{B}}_x(r_1)) \ge \tfrac{d}{2}+1$, and hence $\mathsf{G}$ occurs. To sum up, by (\ref{newadd511}) we obtain the desired bound (\ref{new57}): 
\begin{equation}
	 \mathbb{P}(\mathsf{G}) \ge \mathbb{P}\big(\mathsf{H} \cap  \{|\widehat{\mathfrak{C}}_{r_2}^* | <  C_{\heartsuit}   \} \big)  \ge  \tfrac{1}{2}c_{\diamondsuit}. 
\end{equation}

 In what follows, we present the proofs of (\ref{claim59}) and (\ref{claim510}).

 \textbf{Proof of (\ref{claim59}).} Applying Fatou's Lemma, one has \
    \begin{equation}\label{5.10}
    	\begin{split}
    		\mathbb{P}(\mathsf{H}) =	\mathbb{E}\big[ \limsup_{k\to \infty} \mathbbm{1}_{\mathsf{H}_{k,c_{\spadesuit}}} \big]  = &  1- \mathbb{E}\big[ \liminf_{k\to \infty} \mathbbm{1}_{(\mathsf{H}_{k,c_{\spadesuit}})^c} \big]   \\
 \ge &1-  \liminf_{k\to \infty} \mathbb{P}\big( (\mathsf{H}_{k,c_{\spadesuit}})^c \big) =\limsup_{k\to \infty}  \mathbb{P}\big( \mathsf{H}_{k,c_{\spadesuit}} \big). 
    	\end{split}
    \end{equation}
     We denote by $\mathsf{H}_{k,c_{\spadesuit}}^n$ the analogue of $\mathsf{H}_{k,c_{\spadesuit}}$ obtained by replacing $\mathcal{S}^*$ with $n^{-1}\cdot \cup  \mathcal{L}_{1/2}^{\widetilde{\mathbb{Z}}^d}$. As in (\ref{new52}), Property (II) of the scaling limit $\mathcal{S}^*$ implies $\mathbb{P}\big( \mathsf{H}_{k,c_{\spadesuit}} \big)= \lim\limits_{n\to \infty}\mathbb{P}\big( \mathsf{H}_{k,c_{\spadesuit}}^n \big)$. 
  Combined with (\ref{5.10}), it yields 
  \begin{equation}\label{newnew512}
  	\mathbb{P}(\mathsf{H})\ge \limsup_{k\to \infty} \big[  \lim\limits_{n\to \infty}\mathbb{P}\big( \mathsf{H}_{k,c_{\spadesuit}}^n \big)\big].  
  \end{equation}

 Next, we apply the second moment method to show that $\mathbb{P}\big( \mathsf{H}_{k,c_{\spadesuit}}^n \big)$ is uniformly bounded away from zero. For any $k,n\ge 1$ and $z\in  \Phi_{10^{-k}}$, we define $\mathsf{J}^z_{k,n}$ as the event that there exists a cluster in $n^{-1}\cdot \cup  \mathcal{L}_{1/2}^{\widetilde{\mathbb{Z}}^d}$ that has diameter at least $r_2$ and intersects the box $\Lambda_z(10^{-k})$. Let $\mathbb{J}_{_{k,n}}:=\sum_{z\in \Phi_{10^{-k}}} \mathbbm{1}_{\mathsf{J}^z_{k,n}}$. Note that $\mathsf{H}_{k,c_{\spadesuit}}^n= \{\mathbb{J}_{_{k,n}} \ge c_{\spadesuit}10^{(\frac{d}{2}+1)k} \}$, and that for all sufficiently large $k$, 
  \begin{equation}\label{new513}
\big\{ \Lambda_z(10^{-k}) \xleftrightarrow{n^{-1}\cdot \cup  \mathcal{L}_{1/2}^{\widetilde{\mathbb{Z}}^d} } \bm{\mathcal{C}}_z(2r_2)     \big\}  	\subset   \mathsf{J}^z_{k,n} \subset  \big\{ \Lambda_z(10^{-k}) \xleftrightarrow{n^{-1}\cdot \cup  \mathcal{L}_{1/2}^{\widetilde{\mathbb{Z}}^d} } \bm{\mathcal{C}}_z(\tfrac{1}{2}r_2)     \big\}.  
 \end{equation} 
 Thus, for the first moment of $\mathbb{J}_{_{k,n}}$, we have 
 \begin{equation}\label{new514}
 	\begin{split}
 		\mathbb{E}\big[\mathbb{J}_{_{k,n}} \big] = \sum\nolimits_{z\in \Phi_{10^{-k}}} \mathbb{P}\big( \mathsf{J}^z_{k,n}\big)  \ge \big| \Phi_{10^{-k}} \big|\cdot \rho_d(10^{-k-1}n, 20r_2n ) \overset{(\ref{crossing_low})}{\gtrsim }10^{(\frac{d}{2}+1)k} . 
 	\end{split}
 \end{equation}
 Meanwhile, the second moment of $\mathbb{J}_{_{k,n}}$ can be written as 
 \begin{equation}\label{new515}
 	\begin{split}
 		\mathbb{E}\big[\mathbb{J}_{_{k,n}}^2 \big]  =&   \sum\nolimits_{z_1,z_2\in \Phi_{10^{-k}}} \mathbb{P}\big( \mathsf{J}^{z_1}_{k,n}\cap \mathsf{J}^{z_2}_{k,n} \big).
 		 	\end{split}
 \end{equation}
 In what follows, we estimate the probability of $\mathsf{J}^{z_1}_{k,n}\cap \mathsf{J}^{z_2}_{k,n}$ separately for the cases $\|z_1-z_2\|\le C_{\clubsuit}\cdot 10^{-k}$ and $\|z_1-z_2\|> C_{\clubsuit}\cdot 10^{-k}$.




 \textbf{When $\|z_1-z_2\|\le C_{\clubsuit}\cdot 10^{-k}$.} For any $z\in \Phi_{10^{-k}}$, we have 
 \begin{equation}\label{new516}
 	\begin{split}
 		   \mathbb{P}\big( \mathsf{J}^{z}_{k,n} \big) \overset{(\ref{newfix_box_contain}),(\ref{new513})}{\le} \rho_d(10^{-k+1}n, \tfrac{1}{20}c_\star r_2n )   \overset{(\ref{crossing_low})}{\lesssim  } 10^{(-\frac{d}{2}+1)k}. 
 	\end{split}
 \end{equation}
It then follows from (\ref{new516}) that  
 \begin{equation}\label{new517}
 	\begin{split}
 		\mathbb{P}\big( \mathsf{J}^{z_1}_{k,n}\cap \mathsf{J}^{z_2}_{k,n} \big) \overset{}{\le}  \mathbb{P}\big( \mathsf{J}^{z_1}_{k,n} \big)   \overset{ }{\lesssim  } 10^{(-\frac{d}{2}+1)k}. 
 	\end{split}
 \end{equation}

  \textbf{When $\|z_1-z_2\|> C_{\clubsuit}\cdot 10^{-k}$.} Let $\mathsf{K}^{(1)}_{z_1,z_2}$ denote the event that there exist two distinct clusters in $n^{-1}\cdot \cup  \mathcal{L}_{1/2}^{\widetilde{\mathbb{Z}}^d}$ certifying $\mathsf{J}^{z_1}_{k,n}$ and $\mathsf{J}^{z_2}_{k,n}$ respectively. We also denote $\mathsf{K}^{(2)}_{z_1,z_2}:=\mathsf{J}^{z_1}_{k,n}\cap \mathsf{J}^{z_2}_{k,n} \cap [ \mathsf{K}^{(1)}_{z_1,z_2}]^c$. By the BKR inequality, we have 
 \begin{equation}\label{new518}
 	\mathbb{P}\big( \mathsf{K}^{(1)}_{z_1,z_2}\big) \le \mathbb{P}\big( \mathsf{J}^{z_1}_{k,n} \big) \cdot \mathbb{P}\big(   \mathsf{J}^{z_2}_{k,n} \big)  \overset{(\ref{new516})}{\lesssim } 10^{(2-d)k}.
 \end{equation}
On $\mathsf{K}^{(2)}_{z_1,z_2}$, there exists a cluster in $n^{-1}\cdot \cup  \mathcal{L}_{1/2}^{\widetilde{\mathbb{Z}}^d}$ intersecting $\Lambda_{z_1}(10^{-k})$, $\Lambda_{z_2}(10^{-k})$ and $\bm{\mathcal{C}}_{z_1}(\frac{1}{2}r_2)$ (we denote this event by $\overline{\mathsf{K}}^{(2)}_{z_1,z_2}$). It follows from \cite[Lemma 5.2]{cai2024quasi} that 
 \begin{equation}\label{new519}
 	\begin{split}
 		\mathbb{P}\big( \overline{\mathsf{ K}}^{(2)}_{z_1,z_2}\big) \lesssim &  \mathbb{P}\big( \Lambda_{z_1}(10^{-k}) \xleftrightarrow{n^{-1}\cdot \cup  \mathcal{L}_{1/2}^{\widetilde{\mathbb{Z}}^d}}  \bm{\mathcal{C}}_{z_1}(C_{\clubsuit}^{1/4}\|z_1-z_2\|)  \big) \\
 		&\cdot  \Big[ \mathbb{P}\big( \Lambda_{z_2}(10^{-k}) \xleftrightarrow{n^{-1}\cdot \cup  \mathcal{L}_{1/2}^{\widetilde{\mathbb{Z}}^d}}  \bm{\mathcal{C}}_{z_1}(\tfrac{1}{2}r_2)  \big)  \\
 		&\ \ \ \ +\mathbb{P}\big( \Lambda_{z_2}(10^{-k}) \xleftrightarrow{n^{-1}\cdot \cup  \mathcal{L}_{1/2}^{\widetilde{\mathbb{Z}}^d}}  \bm{\mathcal{C}}_{z_1}(C_{\clubsuit}^{1/2}\|z_1-z_2\|)  \big)\\
 		&\ \ \ \ \ \ \ \cdot \mathbb{P}\big(  \bm{\mathcal{C}}_{z_1}(\tfrac{1}{2}r_2)\xleftrightarrow{n^{-1}\cdot \cup  \mathcal{L}_{1/2}^{\widetilde{\mathbb{Z}}^d}}  \bm{\mathcal{C}}_{z_1}(C_{\clubsuit}^{1/2}\|z_1-z_2\|)  \big)\Big] \\
 		\overset{(\ref{crossing_low})}{\lesssim } & \big( \tfrac{10^{-k}}{\|z_1-z_2\|} \big)^{\frac{d}{2}-1}\cdot   \big( \tfrac{10^{-k}}{r_2} \big)^{\frac{d}{2}-1}  \asymp \|z_1-z_2\|^{-\frac{d}{2}+1}10^{(2-d)k}. 
 	\end{split}
 \end{equation}
Note that $\|z_1-z_2\|\lesssim r_1$. Thus, combining (\ref{new518}) and (\ref{new519}), we get 
 \begin{equation}\label{new520}
 	\begin{split}
 		\mathbb{P}\big( \mathsf{J}^{z_1}_{k,n}\cap \mathsf{J}^{z_2}_{k,n} \big)    \overset{ }{\lesssim  } \|z_1-z_2\|^{-\frac{d}{2}+1}10^{(2-d)k}. 
 	\end{split}
 \end{equation}

By (\ref{new515}), (\ref{new517}) and (\ref{new520}), we have 
\begin{equation}\label{new521}
	\begin{split}
		\mathbb{E}\big[\mathbb{J}_{_{k,n}}^2 \big]  \lesssim   &  \big| \{(z_1,z_2)\in (\Phi_{10^{-k}})^2: \|z_1-z_2\|\le C_{\clubsuit}\cdot 10^{-k}\} \big|\cdot 10^{(-\frac{d}{2}+1)k}\\
		&+\sum\nolimits_{z_1,z_2\in \Phi_{10^{-k}}:\|z_1-z_2\|> C_{\clubsuit}\cdot 10^{-k}}   \|z_1-z_2\|^{-\frac{d}{2}+1}10^{(2-d)k}\\
		\lesssim &   10^{(\frac{d}{2}+1)k}+ 10^{2k}\cdot \max_{z_1\in \Phi_{10^{-k}} }\sum_{z_2\in \Phi_{10^{-k}}:\|z_1-z_2\|> C_{\clubsuit}\cdot 10^{-k}} \|z_1-z_2\|^{-\frac{d}{2}+1}\\
		\lesssim &     10^{(\frac{d}{2}+1)k}\cdot  \Big(1+ \sum\nolimits_{w\in  B(Cr_1\cdot 10^{k} )} (\|w\|+1)^{-\frac{d}{2}+1} \Big) \lesssim 10^{(d+2)k}. 
	\end{split}
\end{equation}
Recall that $\mathsf{H}_{k,c_{\spadesuit}}^n= \{\mathbb{J}_{_{k,n}} \ge c_{\spadesuit}10^{(\frac{d}{2}+1)k} \}$. Thus, by applying the Paley-Zygmund inequality and then invoking (\ref{new514}) and (\ref{new521}), we have 
\begin{equation}\label{new522}
	\mathbb{P}\big(  \mathsf{H}_{k,c_{\spadesuit}}^n  \big) = \mathbb{P}\big(\mathbb{J}_{_{k,n}} \ge c_{\spadesuit}10^{(\frac{d}{2}+1)k}\big)  \gtrsim \big( \mathbb{E}\big[\mathbb{J}_{_{k,n}} \big]\big)^2 /  \mathbb{E}\big[\mathbb{J}_{_{k,n}}^2 \big] \gtrsim 1. 
\end{equation}

Plugging (\ref{new522}) into (\ref{newnew512}), we derive the claim (\ref{claim59}).  \qed

\vspace{0.2cm}

\textbf{Proof of (\ref{claim510}).} Let $\mathfrak{C}^n_{r_2}$ denote the collection of clusters in $n^{-1}\cdot \cup  \mathcal{L}_{1/2}^{\widetilde{\mathbb{Z}}^d}$ with diameter at least $r_2$. It follows from (\ref{crossing_low}) that for any $k\in \mathbb{N}^+$ and $z\in \mathbb{R}^d$,
\begin{equation}\label{improve5.28}
	\mathbb{P}\big(  \exists \mathcal{C} \in \mathfrak{C}^n_{r_2}  \ \text{such that}\     \Lambda_z(10^{-k})\cap \mathcal{C}  \neq \emptyset \big) \lesssim \big(10^{k} r_2 \big)^{-\frac{d}{2}+1} 
\end{equation}    
holds uniformly for all $n\ge 1$. By the BKR inequality and (\ref{improve5.28}), there exists a sufficiently large integer $k_\dagger(d,r_2)  \ge 1$ such that for any $l\in \mathbb{N}^+$,  
\begin{equation}
	\begin{split}
		&   \mathbb{P}\big( |\{\mathcal{C} \in  \mathfrak{C}^n_{r_2}:  \Lambda_z(10^{-k_\dagger})\cap \mathcal{C}  \neq \emptyset \}|\ge l  \big) \\
		   \le &\big[  \mathbb{P}\big(  \exists \mathcal{C} \in \mathfrak{C}^n_{r_2}  \ \text{such that}\     \Lambda_z(10^{-k_\dagger})\cap \mathcal{C}  \neq \emptyset \big)  \big]^l  \le   2^{-l}.
	\end{split}
\end{equation} 
Letting $n \to \infty$, we obtain 
\begin{equation}\label{improve530}
 \mathbb{P}\big( \mathsf{R}_{z,l}^*  \big):= \mathbb{P}\big( |\{\mathcal{C}^* \in  \mathfrak{C}^*_{r_2}:  \Lambda_z(10^{-k_\dagger})\cap \mathcal{C}  \neq \emptyset \}|\ge l  \big) \le   2^{-l}.
\end{equation}
Let $\Phi_{\dagger}:= \{ z\in  (2\cdot 10^{-k_\dagger})\cdot \mathbb{Z}^d: \Lambda_z(10^{-k_\dagger})\cap  \bm{\mathcal{B}}_x(2r_1) \neq \emptyset \}$. Note that $|\Phi_{\dagger}|\lesssim 10^{k_\dagger d}r_1^d$, and that every cluster in $\widehat{\mathfrak{C}}_{r_2}^*$ must intersect $\Lambda_z(10^{-k_\dagger})$ for some $z\in \Phi_{\dagger}$. Hence, if $|\widehat{\mathfrak{C}}_{r_2}^* | \ge C_{\heartsuit}$, then there exists $z\in \Phi_{\dagger}$ such that $\mathsf{R}_{z,C_{\heartsuit} \cdot |\Phi_{\dagger}|^{-1}}^*$ occurs. Therefore, by the union bound and (\ref{improve530}), one has
 \begin{equation}\label{add528}
	\mathbb{P}\big( |\widehat{\mathfrak{C}}_{r_2}^* | \ge C_{\heartsuit}  \big) \le \sum\nolimits_{z\in \Phi_{\dagger}} \mathbb{P}\big(  \mathsf{R}_{z,C_{\heartsuit}\cdot |\Phi_{\dagger}|^{-1}}^* \big)\le |\Phi_{\dagger}|\cdot 2^{-C_{\heartsuit}\cdot |\Phi_{\dagger}|^{-1}}. 
\end{equation}
 By taking a sufficiently large $C_{\heartsuit}>0$ (depending on $d$, $r_1$ and  $r_2$), we derive the claim (\ref{claim510}) from (\ref{add528}). 
 \qed

 \vspace{0.2cm}

In conclusion, we have verified (\ref{new57}), which together with (\ref{new56}) yields Item (2). This completes the proof of Theorem \ref{theorem_main_part2}.  \qed


 \section*{Acknowledgments}


 We would like to express our sincere gratitude to Wendelin Werner for his insightful remarks, which significantly enhanced the clarity of the manuscript. We warmly thank Yifan Gao for pointing out the reference \cite{sapozhnikov2018brownian}. J. Ding is supported by the National Natural Science Foundation of China (Grant No. 12231002, 12595284, 12595280), and by the New Cornerstone Science Foundation through the New Cornerstone Investigator Program and XPLORER PRIZE.





\begin{thebibliography}{10}

\bibitem{cai2024incipient}
Z.~Cai and J.~Ding.
\newblock {Incipient infinite clusters and self-similarity for metric graph Gaussian free fields and loop soups}.
\newblock {\em arXiv preprint arXiv:2412.05709}, 2024.

\bibitem{cai2024high}
Z.~Cai and J.~Ding.
\newblock {One-arm exponent of critical level-set for metric graph Gaussian free field in high dimensions}.
\newblock {\em Probability Theory and Related Fields}, pages 1--86, 2024.

\bibitem{cai2024one}
Z.~Cai and J.~Ding.
\newblock {One-arm probabilities for metric graph Gaussian free fields below and at the critical dimension}.
\newblock {\em arXiv preprint arXiv:2406.02397}, 2024.

\bibitem{cai2024quasi}
Z.~Cai and J.~Ding.
\newblock {Quasi-multiplicativity and regularity for critical metric graph Gaussian free fields}.
\newblock {\em arXiv preprint arXiv:2412.05706}, 2024.

\bibitem{inpreparation_twoarm}
Z.~Cai and J.~Ding.
\newblock {Heterochromatic two-arm probabilities for metric graph Gaussian free fields}.
\newblock {\em arXiv preprint arXiv:2510.20492}, 2025.

\bibitem{inpreparation_pivotal}
Z.~Cai and J.~Ding.
\newblock {Separation and cut edge in macroscopic clusters for metric graph Gaussian free fields}.
\newblock {\em arXiv preprint arXiv:2510.20516}, 2025.

\bibitem{camia2006two}
F.~Camia and C.~M. Newman.
\newblock Two-dimensional critical percolation: the full scaling limit.
\newblock {\em Communications in Mathematical Physics}, 268(1):1--38, 2006.

\bibitem{chang2017supercritical}
Y.~Chang.
\newblock Supercritical loop percolation on $\mathbb{Z}^d$ for $d\ge 3$.
\newblock {\em Stochastic Processes and their Applications}, 127(10):3159--3186, 2017.

\bibitem{chang2024percolation}
Y.~Chang, H.~Du, and X.~Li.
\newblock Percolation threshold for metric graph loop soup.
\newblock {\em Bernoulli}, 30(4):3324--3333, 2024.

\bibitem{chang2016phase}
Y.~Chang and A.~Sapozhnikov.
\newblock Phase transition in loop percolation.
\newblock {\em Probability Theory and Related Fields}, 164(3):979--1025, 2016.

\bibitem{ding2020percolation}
J.~Ding and M.~Wirth.
\newblock Percolation for level-sets of {Gaussian} free fields on metric graphs.
\newblock {\em The Annals of Probability}, 48(3):1411--1435, 2020.

\bibitem{drewitz2022cluster}
A.~Drewitz, A.~Pr{\'e}vost, and P.-F. Rodriguez.
\newblock {Cluster capacity functionals and isomorphism theorems for Gaussian free fields}.
\newblock {\em Probability Theory and Related Fields}, 183(1):255--313, 2022.

\bibitem{drewitz2023arm}
A.~Drewitz, A.~Pr{\'e}vost, and P.-F. Rodriguez.
\newblock {Arm exponent for the Gaussian free field on metric graphs in intermediate dimensions}.
\newblock {\em arXiv preprint arXiv:2312.10030}, 2023.

\bibitem{drewitz2023critical}
A.~Drewitz, A.~Pr{\'e}vost, and P.-F. Rodriguez.
\newblock Critical exponents for a percolation model on transient graphs.
\newblock {\em Inventiones mathematicae}, 232(1):229--299, 2023.

\bibitem{drewitz2024cluster}
A.~Drewitz, A.~Pr{\'e}vost, and P.-F. Rodriguez.
\newblock {Cluster volumes for the Gaussian free field on metric graphs}.
\newblock {\em arXiv preprint arXiv:2412.06772}, 2024.

\bibitem{drewitz2024critical}
A.~Drewitz, A.~Pr{\'e}vost, and P.-F. Rodriguez.
\newblock {Critical one-arm probability for the metric Gaussian free field in low dimensions}.
\newblock {\em Probability Theory and Related Fields}, pages 1--24, 2025.

\bibitem{falconer2013fractal}
K.~Falconer.
\newblock {\em Fractal geometry: mathematical foundations and applications}.
\newblock John Wiley \& Sons, 2013.

\bibitem{ganguly2024critical}
S.~Ganguly and K.~Jing.
\newblock Critical level set percolation for the {GFF} in $d> 6$: comparison principles and some consequences.
\newblock {\em arXiv preprint arXiv:2412.17768}, 2024.

\bibitem{ganguly2024ant}
S.~Ganguly and K.~Nam.
\newblock {The ant on loops: Alexander-Orbach conjecture for the critical level set of the Gaussian free field}.
\newblock {\em arXiv preprint arXiv:2403.02318}, 2024.

\bibitem{cutpointlemma}
G.~Lawler.
\newblock {Cut times for simple random walk}.
\newblock {\em Electronic Journal of Probability}, 1:1 -- 24, 1996.

\bibitem{lawler2007random}
G.~Lawler and J.~Trujillo~Ferreras.
\newblock Random walk loop soup.
\newblock {\em Transactions of the American Mathematical Society}, 359(2):767--787, 2007.

\bibitem{lawler2010random}
G.~F. Lawler and V.~Limic.
\newblock {\em Random walk: a modern introduction}, volume 123.
\newblock Cambridge University Press, 2010.

\bibitem{le2011markov}
Y.~Le~Jan.
\newblock {\em Markov Paths, Loops and Fields: {\'E}cole d'{\'E}t{\'e} de Probabilit{\'e}s de Saint-Flour XXXVIII--2008}, volume 2026.
\newblock Springer Science \& Business Media, 2011.

\bibitem{lupu2016loop}
T.~Lupu.
\newblock From loop clusters and random interlacements to the free field.
\newblock {\em Annals of Probability}, 44(3):2117--2146, 2016.

\bibitem{lupu2016loop2d}
T.~Lupu.
\newblock Loop percolation on discrete half-plane.
\newblock {\em Electronic Communications in Probability}, 21(30):1--9, 2016.

\bibitem{lupu2019convergence}
T.~Lupu.
\newblock Convergence of the two-dimensional random walk loop soup clusters to {CLE}.
\newblock {\em J. Eur. Math. Soc}, 21(4):1201--1227, 2019.

\bibitem{morters2010brownian}
P.~M{\"o}rters and Y.~Peres.
\newblock {\em Brownian motion}, volume~30.
\newblock Cambridge University Press, 2010.

\bibitem{sapozhnikov2018brownian}
A.~Sapozhnikov and D.~Shiraishi.
\newblock {On Brownian motion, simple paths, and loops}.
\newblock {\em Probability Theory and Related Fields}, 172(3):615--662, 2018.

\bibitem{sheffield2012conformal}
S.~Sheffield and W.~Werner.
\newblock {Conformal loop ensembles: the Markovian characterization and the loop-soup construction}.
\newblock {\em Annals of Mathematics}, pages 1827--1917, 2012.

\bibitem{10.1214/ECP.v17-1792}
A.-S. Sznitman.
\newblock {An isomorphism theorem for random interlacements}.
\newblock {\em Electronic Communications in Probability}, 17:1--9, 2012.

\bibitem{werner2016spatial}
W.~Werner.
\newblock On the spatial {Markov} property of soups of unoriented and oriented loops.
\newblock In {\em S{\'e}minaire de Probabilit{\'e}s XLVIII}, pages 481--503. Springer, 2016.

\bibitem{werner2021clusters}
W.~Werner.
\newblock On clusters of {B}rownian loops in $d$ dimensions.
\newblock {\em In and Out of Equilibrium 3: Celebrating Vladas Sidoravicius}, pages 797--817, 2021.

\bibitem{werner2025switching}
W.~Werner.
\newblock {A switching identity for cable-graph loop soups and Gaussian free fields}.
\newblock {\em arXiv preprint arXiv:2502.06754}, 2025.

\end{thebibliography}

\end{document}